\newtheorem{theorem}{Theorem}[section]
\newtheorem{lemma}[theorem]{Lemma}
\newtheorem{proposition}[theorem]{Proposition}
\theoremstyle{definition}
\newtheorem{example}[theorem]{Example}
\newtheorem{definition}[theorem]{Definition}
\theoremstyle{remark}
\newtheorem{remark}[theorem]{Remark}
\begin{document}

\title[Maximum orders of cyclic and abelian extendable actions]
{Maximum orders of cyclic and abelian extendable actions on surfaces}

\author{Chao Wang}
\address{School of Mathematical Sciences, University of
Science and Technology of China, Hefei 230026, CHINA}
\email{chao\_{}wang\_{}1987@126.com}

\author{Yimu Zhang}
\address{Mathematics School, Jilin University,
Changchun 130012, CHINA}
\email{zym534685421@126.com}

\subjclass[2010]{Primary 57M60, 57S17, 57S25}

\keywords{maximum order, extendable action, orbifold}

\thanks{The authors were supported by National Natural Science Foundation of China (11371034). The authors would like to thank Prof. Shicheng Wang and the referee for their helpful advices.}

\begin{abstract}
A faithful action of a group $G$ on the genus $g>1$ orientable closed surface $\Sigma_g$ is extendable (over the three dimensional sphere $S^3$), with respect to an embedding $e: \Sigma_g\hookrightarrow S^3$, if $G$ can act on $S^3$ such that $h\circ e=e\circ h$ for any $h \in G$. We show that the maximum order of extendable cyclic group actions on $\Sigma_g$ is $4g+4$ when $g$ is even, and is $4g-4$ when $g$ is odd; the maximum order of extendable abelian group actions on $\Sigma_g$ is $4g+4$. We also give the maximum orders of cyclic and abelian group actions on handlebodies.
\end{abstract}

\date{}
\maketitle

\section{Introduction}

Let $\Sigma_g$ be the genus $g>1$ orientable closed surface, $S^3$ be the three dimensional sphere, and $G$ be a finite group.

\begin{definition}\label{Def of extendable action}
A faithful $G$-action on $\Sigma_g$ is extendable, with respect to an embedding $e: \Sigma_g\hookrightarrow S^3$, if $G$ can act on $S^3$ such that $h\circ e=e\circ h$ for any $h \in G$. We also say that such an action on $\Sigma_g$ is extendable over $S^3$.
\end{definition}

In this paper we consider the following question in the smooth category (namely all manifolds and maps are smooth): what is the maximum order of extendable cyclic (abelian) group actions on $\Sigma_g$ ? And following is our main result.





\begin{theorem}\label{Thm of maximum of CandA action}
(1). The maximum order of extendable cyclic group actions on $\Sigma_g$ is $4g+4$ when $g$ is even, and is $4g-4$ when $g$ is odd.

(2). The maximum order of extendable abelian group actions on $\Sigma_g$ is $4g+4$.

\end{theorem}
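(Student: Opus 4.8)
The plan is to reduce both statements to the analogous results for handlebodies, established earlier in the paper, which I take as given: the maximum order of a cyclic group acting on the genus $g$ handlebody $H_g$ is $2g+2$ when $g$ is even and $2g$ when $g$ is odd, and the maximum order of an abelian group acting on $H_g$ is $2g+2$ for every $g$.

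For the upper bounds, let $G$ be cyclic (resp.\ abelian) and act on $\Sigma_g$ so that the action extends, via an embedding $e\colon\Sigma_g\hookrightarrow S^3$, to an action of $G$ on $S^3$. First I would normalize: since $S^3$ is simply connected $e(\Sigma_g)$ is compressible, so by the equivariant Dehn's lemma and loop theorem (Meeks--Yau) it carries a $G$-invariant system of compressing disks; compressing equivariantly and iterating reduces matters to the case where the invariant surface is a Heegaard surface of $S^3$, although the bookkeeping needed to guarantee that a priori knotted invariant surfaces cannot yield larger groups — in particular the odd-$g$ refinement — will need care. In the Heegaard case $G$ either preserves each of the two handlebodies $H_{\pm}$ bounded by $e(\Sigma_g)$, or interchanges them. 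If it preserves them, $G$ acts faithfully on $H_{+}\cong H_g$, so $|G|\le 2g+2$. If it interchanges them, the index-two subgroup $G_0$ preserving the sides acts faithfully on $H_g$, so $|G|=2|G_0|\le 4g+4$; this already proves the abelian bound. For cyclic $G=\langle\rho\rangle$ interchanging the sides, $\rho^2$ generates a cyclic action on $H_g$ of order $|G|/2$, and $\rho$, through the identification of $H_{+}$ with $H_{-}$ that it induces, is essentially a ``square root'' of this action composed with the side-interchange. Analysing when such a $\rho$ closes up to a finite cyclic group — equivalently, tracking a $\mathbb Z_2$-valued obstruction built from the action of $\rho$ on orientations of $e(\Sigma_g)$ and from that identification — should show that $|G|/2$ can reach $2g+2$ when $g$ is even but is at most $2g-2$ when $g$ is odd, giving $|G|\le 4g+4$ (even) and $|G|\le 4g-4$ (odd).

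For the constructions I would exhibit explicit symmetric embeddings. For the abelian bound: realize a maximal-order ($2g+2$) abelian action on $H_g$ on a regular neighbourhood of a suitable spine, double it across $\partial H_g$ to an action on $S^3=H_g\cup_{\Sigma_g}H_g$, and adjoin the commuting involution of $S^3$ that interchanges the two copies. By Smith theory the fixed-point set of a nontrivial periodic homeomorphism of $S^3$ is a sphere, hence contains no closed surface of positive genus, so the resulting abelian group of order $4g+4$ restricts faithfully to $\Sigma_g$. For the cyclic bound with $g$ even: since $2g+2$ is twice an odd number the square root above exists, so the maximal cyclic action on $H_g$ promotes to a single symmetry of $S^3$ of order $4g+4$. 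For the cyclic bound with $g$ odd: the same doubling applied to an order $2g-2$ cyclic action on $H_g$ produces a symmetry of $S^3$ of order $4g-4$, with faithfulness on $\Sigma_g$ checked as before.

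The main obstacle, I expect, is twofold: controlling a priori knotted $G$-invariant surfaces so that the handlebody theorems really apply without sacrificing the extremal orders (naive equivariant compression alone does not deliver the odd-$g$ refinement), and pinning down the $\mathbb Z_2$ parity obstruction in the side-interchanging cyclic case precisely enough to prove simultaneously that order $4g+4$ is impossible for odd $g$ and that order $4g-4$ is attained. The remainder — bookkeeping with orbifold Euler characteristics and with the explicit symmetric spines — is routine.
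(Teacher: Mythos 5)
Your reduction rests on two premises that fail. First, the handlebody bounds you quote ($2g+2$, resp.\ $2g$, for cyclic and $2g+2$ for abelian) are the maximal orders of \emph{orientation-preserving} actions on $V_g$ only. A subgroup of an extendable action that preserves the two sides of $e(\Sigma_g)$ need not preserve the orientation of the handlebody it acts on: by the side/orientation parity (Lemma \ref{Lem of change sides}), a side-preserving element may reverse both the orientation of $S^3$ and of $\Sigma_g$, hence of the handlebody. Orientation-reversing cyclic and abelian actions on $V_g$ can have order well beyond $2g+2$ (this is exactly Theorem \ref{Thm of CandA on handlebody} and Proposition \ref{Pro of ORhandlebody}; e.g.\ $CH_8^-\geq 30$, and $AH_g^-=4g+4$), and indeed Example \ref{Ex of cage}(6) gives an abelian group of order $4g+4$ preserving each side of a genus-$g$ Heegaard surface, contradicting your claim that side-preserving $G$ has $|G|\le 2g+2$. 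So your dichotomy does not yield the abelian bound $4g+4$, and for cyclic groups the hard case you defer to an unspecified ``$\mathbb Z_2$ parity obstruction'' (the odd-$g$ bound $4g-4$, and the type $(+,-)$/$(-,-)$ bounds) is precisely where the real work lies; the paper gets the $(-,+)$ bound from Wang's classical bound on orientation-reversing periodic maps of $\Sigma_g$ and handles the remaining types with orbifold classification of $V_g/\langle h^2\rangle$ (Lemma \ref{Lem of hand classify}), the fixed-point circle in $S^3/\langle h^2\rangle$ (Lemma \ref{Lem of Fix circle}), Lefschetz and homology arguments (Lemmas \ref{Lem of fixed point}, \ref{Lem of homology}), not by equivariant compression to a Heegaard surface, which you assume but do not justify for possibly knotted invariant surfaces.

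Second, your constructions do not produce actions on $S^3$. The double of $V_g$ along its boundary is $\#_g(S^1\times S^2)$, not $S^3$, so ``double the maximal handlebody action and adjoin the swap involution'' gives at best an action on $\#_g(S^1\times S^2)$; to get $S^3$ you must exhibit a specific Heegaard embedding for which the extremal cyclic or abelian action commutes with a side-exchanging symmetry, and that equivariance is exactly the nontrivial content of the paper's explicit Cage, Wheel and Fork examples (Examples \ref{Ex of cage}--\ref{Ex of fork}). Likewise the claim that for even $g$ the order-$(2g+2)$ cyclic handlebody action ``promotes to'' an order-$(4g+4)$ symmetry of $S^3$ because $g+1$ is odd is asserted, not proved. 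As it stands, neither the upper bounds nor the lower bounds in either part of the theorem are established by your argument.
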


Let $V_g$ be the genus $g>1$ orientable handlebody. The question below is closely related to the above one: what is the maximum order of cyclic (abelian) group actions on $V_g$ ? And we have the following answer to it.


\begin{theorem}\label{Thm of CandA on handlebody}
(1). The maximum order of cyclic group actions on $V_g$ is
$$Max\{2[m, n]\mid g=[m, n]-\frac{m+n}{(m, n)}+1, m, n\in \mathbb{Z}_+, 2\nmid m, 2\nmid n\}$$
when $g$ is even, and is
$$Max\{2kn\mid g=kn-k+1, k, n\in \mathbb{Z}_+, 2\nmid k, 2\nmid n\}$$
when $g$ is odd.

(2). The maximum order of abelian group actions on $V_g$ is $4g+4$ when $g\neq 5$, and is $32$ when $g=5$. It is equal to the maximum order of abelian group actions on $\Sigma_g$.
\end{theorem}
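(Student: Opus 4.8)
The plan is to extract both maxima from the quotient orbifold $\mathcal{O}=V_g/G$. A faithful $G$--action on the aspherical, free--$\pi_1$ manifold $V_g$ produces a short exact sequence $1\to\pi_1(V_g)\to\pi_1^{orb}(\mathcal{O})\to G\to 1$ with free kernel of rank $g$, and $\mathcal{O}$ is a \emph{handlebody orbifold}: by the equivariant loop theorem and Dehn's lemma one may cut $V_g$ along a $G$--invariant complete system of meridian disks, after which $G$ acts on a union of balls, each of which (by local linearity of smooth finite actions) is a linear ball; reassembling shows that $|\mathcal{O}|$ is a handlebody $V_h$, that the singular locus of $\mathcal{O}$ is a union of cone--arcs and cone--circles (properly embedded, labelled by local rotation orders) together with reflector walls coming from orientation--reversing elements, and that $\mathcal{O}$ deformation retracts onto a finite complex (a graph of finite groups, decorated by the reflector walls) from which $\pi_1^{orb}(\mathcal{O})$ and the epimorphism to $G$ are read off. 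Multiplicativity of the orbifold Euler characteristic then gives $g-1=-|G|\cdot\chi^{orb}(\mathcal{O})$, with $g$ equal to the rank of the free kernel. Keeping careful track of orientation is crucial: the extremal actions necessarily use an orientation--reversing element, whose fixed surface becomes a reflector wall of $\mathcal{O}$, and this is what produces the factor $2$ in the formulas.

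For part (1) I would specialize to $G=\mathbb{Z}_N$, so that all isotropy groups are cyclic. Then the cone--arcs and cone--circles carry constant labels, a singular vertex has valence at most $2$ (a rotation of a ball has a single axis), and together with the reflector walls this forces the admissible pairs $\big(\mathcal{O},\ \pi_1^{orb}(\mathcal{O})\twoheadrightarrow\mathbb{Z}_N\big)$ into a short list of combinatorial models. Imposing the Euler--characteristic identity together with the requirement that the epimorphism be injective on every vertex group and surjective onto $\mathbb{Z}_N$, one reads off that $N$ and $g$ must be of the form $N=2[m,n]$, $g=[m,n]-\tfrac{m+n}{(m,n)}+1$ with $m,n$ odd when $g$ is even, and $N=2kn$, $g=kn-k+1$ with $k,n$ odd when $g$ is odd — the odd--parity conditions recording that the order--$2$ orientation reversal cannot be absorbed into a larger cyclic rotation. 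Conversely each such $N$ is realized: assemble the corresponding handlebody orbifold and take its manifold cover, or, equivalently, write $\mathbb{Z}_N$ concretely as a group of rotations and a reflection of $S^3$ preserving an unknotted copy of $V_g$. The maximum of $2[m,n]$ (resp. $2kn$) over the admissible parameters is then the asserted value. The main obstacle is precisely the classification step: showing there are no further models, and that the parity and divisibility conditions are exactly as stated, requires careful bookkeeping of the reflector walls and of how cone--arcs attach to rotation balls in the equivariant--disk decomposition.

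For part (2), the upper bound is by restriction to the boundary. A nontrivial finite--order diffeomorphism of $V_g$ cannot fix $\Sigma_g=\partial V_g$ pointwise: along the boundary the inward normal direction is mapped to itself with positive, hence trivial, eigenvalue, so such a diffeomorphism is the identity near $\partial V_g$, and its fixed point set — a submanifold containing this collar — is then all of $V_g$. Hence $G$ acts faithfully on $\Sigma_g$, so $|G|$ is at most the maximum order of an abelian group of (possibly orientation--reversing) diffeomorphisms of $\Sigma_g$, which equals $4g+4$ for $g\neq5$ and $32$ for $g=5$ — a statement about NEC groups which, if not quoted, is re--obtained by applying the same orbifold/signature analysis to $\Sigma_g/G$ — giving $|G|\le 4g+4$, respectively $|G|\le32$. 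For the lower bound, realize the extremal abelian group directly on a handlebody orbifold whose manifold cover is $V_g$: for $g\neq5$ this is the order--$(4g+4)$ abelian surface action of Theorem~\ref{Thm of maximum of CandA action} arranged to extend over one side of a genus--$g$ Heegaard splitting of $S^3$ (one must check the extremal action does not interchange the two sides); for $g=5$ it is a direct construction of an abelian handlebody orbifold with reflector walls and deck group of order $32$ — for instance $\mathbb{Z}_2^{\,5}$, arising as the abelianization of a right--angled reflection orbifold group. Restricting these actions to $\Sigma_g$ shows the two maxima coincide. The delicate point here is the sporadic case $g=5$: both producing the order--$32$ action on $V_5$ and verifying that $32$ cannot be exceeded on $\Sigma_5$.
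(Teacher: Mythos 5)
Your general framework (quotient orbifold, graph of finite groups, Euler characteristic, plus explicit realizations) is the right one, but in both parts the step that carries the actual content of the theorem is missing. For part (1), you acknowledge the classification of admissible models and the derivation of the parity/divisibility conditions as ``the main obstacle'' and then do not carry it out; this is precisely the heart of the proof, and your setup (working directly with the non-orientable quotient of the full cyclic group, with reflector walls) gives you no handle on it. The paper instead passes to the index-two orientation-preserving subgroup $\langle h^2\rangle$, notes from the examples that at the maximum $|\langle h^2\rangle|>g-1$, so that $\chi(V_g/\langle h^2\rangle)\in(-1,0)$ and the induced graph of groups falls into the four classes of Lemma~\ref{Lem of hand classify}; the odd conditions $2\nmid m,\,2\nmid n$ (resp.\ $2\nmid k$) are then forced by a concrete mechanism you do not have: the induced involution $\overline{h}$ on $V_g/\langle h^2\rangle$ must have a fixed point by a Lefschetz argument (Lemma~\ref{Lem of fixed point}), and at such a point $|\langle h^2\rangle|/|G_x|$ is odd (Lemma~\ref{Lem of odd order}), while Lemma~\ref{Lem of Tocyclic} pins $|\langle h^2\rangle|$ down to $[m,n]$, $kn$, or $k[m,n]$. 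Without these (or equivalent) ingredients, ``careful bookkeeping of reflector walls'' is not a proof that no further models occur, and your heuristic that ``the order-$2$ reversal cannot be absorbed into a larger rotation'' does not yield the parity constraints.

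For part (2) your upper bound is essentially circular: you bound the handlebody action by restricting faithfully to $\Sigma_g$ and then cite the value $4g+4$ ($g\neq5$), $32$ ($g=5$) for abelian actions on $\Sigma_g$ allowing orientation-reversing elements. But that surface statement is itself part of the theorem (``It is equal to the maximum order of abelian group actions on $\Sigma_g$'') and is not an off-the-shelf NEC-group fact; the paper proves it in the opposite direction. Namely, it shows $AH_g^-\le 2AH_g$ and quotes McCullough--Miller--Zimmermann for $AH_g$ (which is where the exceptional value $16$ at $g=5$, hence $32$, really comes from), and then transfers the bound to surfaces via the Reni--Zimmermann extension criterion (Lemma~\ref{Lem of extend to Hand}): for an abelian action with an orientation-reversing element, the orientation-preserving subgroup's branch points are paired by the covering involution, so the action extends to a handlebody, giving $A_g^-=AH_g^-$ (Proposition~\ref{Pro of ag}). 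Your suggested ``signature analysis of $\Sigma_g/G$'' would have to reprove exactly this, including why $g=5$ is the only exception, and nothing in your sketch does so. The realization side of your part (2) (the order-$(4g+4)$ action on a Heegaard handlebody and the $\mathbb{Z}_2^5$ action on $V_5$) matches the paper's Examples~\ref{Ex of cage} and~\ref{Ex of Square} and is fine.
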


Here $\mathbb{Z}_+$ is the set of positive integers, $[m, n]$ is the lowest common multiple of $m$ and $n$, and $(m, n)$ is the greatest common divisor of $m$ and $n$.

If $g$ is even, then let $m=n=g+1$. And if $g$ is odd, then let $k=1, n=g$. Then by Theorem \ref{Thm of CandA on handlebody}, it is easy to see that the maximum order of cyclic group actions on $V_g$ is at least $2g+2$ when $g$ is even, and is at least $2g$ when $g$ is odd.


\begin{remark}\label{Rem of classical result}
(1). The extendable action in Definition \ref{Def of extendable action} was firstly defined in \cite{WWZZ1}. And following notations were used in \cite{WWZZ1}.

$C_g$ ($A_g$): the maximum order of orientation-preserving cyclic (abelian) group actions on $\Sigma_g$.

$CH_g$ ($AH_g$): the maximum order of orientation-preserving cyclic (abelian) group actions on $V_g$.

$CE_g$ ($AE_g$): the maximum order of extendable cyclic (abelian) group actions on $\Sigma_g$, which preserve both the orientations of $\Sigma_g$ and $S^3$.

All these numbers have been determined, see \cite{Ha, St, Wa} for $C_g$; \cite{Ma} for $A_g$; \cite{MMZ} for $CH_g$; \cite{MMZ, RZ1, WWZZ1} for $AH_g$; \cite{WWZZ1} for $CE_g$ and $AE_g$. We summarize them in Table \ref{tab of maxorder of OPcase}:
\begin{table}[h]
\caption{Maximum orders in orientation-preserving case}\label{tab of maxorder of OPcase}
  \begin{tabular}{|l|l|l|l|}
  \hline $C_g$ & $4g+2$ & $A_g$ & $4g+4$\\
  \hline $CH_g$ & $2g+2$ ($g$ even), $2g$ ($g$ odd) & $AH_g$ & $2g+2$ ($g\neq5$), $16$ ($g=5$)\\
  \hline $CE_g$ & $2g+2$ ($g$ even), $2g-2$ ($g$ odd) & $AE_g$ & $2g+2$\\\hline
  \end{tabular}
\end{table}

(2). The maximum order of orientation-reversing periodic maps on $\Sigma_g$ is $4g+4$ when $g$ is even, and is $4g-4$ when $g$ is odd. Hence the maximum order of cyclic group actions on $\Sigma_g$ is $4g+4$ when $g$ is even, and is $4g+2$ when $g$ is odd, see \cite{Wa}.
\end{remark}

By comparing Theorem \ref{Thm of maximum of CandA action} and \ref{Thm of CandA on handlebody} with Remark \ref{Rem of classical result}, we see that in most cases if the maximum order is achieved, then the group will contain an element reversing the orientation of either $\Sigma_g$, $V_g$ or $S^3$.

\begin{definition}\label{Def of MO of ORaction}
For each given $g>1$, define

$C_g^-$: the maximum order of cyclic group actions on $\Sigma_g$, which contains an element reversing the orientation of $\Sigma_g$.

$A_g^-$: the maximum order of abelian group actions on $\Sigma_g$, which contains an element reversing the orientation of $\Sigma_g$.

$CH_g^-$: the maximum order of cyclic group actions on $V_g$, which contains an element reversing the orientation of $V_g$.

$AH_g^-$: the maximum order of abelian group actions on $V_g$, which contains an element reversing the orientation of $V_g$.
\end{definition}

\begin{definition}\label{Def of five type}
Define five types of extendable $G$-actions on $\Sigma_g$ as following:

$(+,+)$: $G$ preserves both the orientations of $\Sigma_g$ and $S^3$.

$(+,-)$: $G$ preserves the orientation of $\Sigma_g$, and there exists $h_1\in G$ such that $h_1$ reverses the orientation of $S^3$.

$(-,+)$: $G$ preserves the orientation of $S^3$, and there exists $h_2\in G$ such that $h_2$ reverses the orientation of $\Sigma_g$.

$(-,-)$: for any $h\in G$, either $h$ preserves both the orientations of $\Sigma_g$ and $S^3$, or $h$ reverses both the orientations of $\Sigma_g$ and $S^3$.

$(Mix)$: there exists $h_1 \in G$ such that $h_1$ preserves the orientation of $\Sigma_g$ and reverses the orientation of $S^3$, and there exists $h_2\in G$ such that $h_2$ reverses the orientation of $\Sigma_g$ and preserves the orientation of $S^3$.
\end{definition}

\begin{definition}\label{Def of MO of type}
For a given type $\mathcal{T}$ in Definition \ref{Def of five type} and a given $g>1$, define

$CE_g\mathcal{T}$: the maximum order of type $\mathcal{T}$ extendable cyclic group actions on $\Sigma_g$, if such an action exists.

$AE_g\mathcal{T}$: the maximum order of type $\mathcal{T}$ extendable abelian group actions on $\Sigma_g$, if such an action exists.
\end{definition}

Clearly we have $CE_g(+,+)=CE_g$ and $AE_g(+,+)=AE_g$. For other types we have the following result. By Remark \ref{Rem of classical result}, it will give us Theorem \ref{Thm of maximum of CandA action}.


\begin{proposition}\label{Pro of maxorder of five type}
For each type $\mathcal{T}\neq (+,+)$ and genus $g>1$, $CE_g\mathcal{T}$ and $AE_g\mathcal{T}$ are given in Table \ref{tab of maxorder of fivetype}. The notation ``---'' means that the type $\mathcal{T}$ extendable action does not exist.
\begin{table}[h]
\caption{Maximum orders of type $\mathcal{T}$ extendable actions}\label{tab of maxorder of fivetype}
  \begin{tabular}{|c|l|l|}
  \hline $\mathcal{T}$ & $CE_g\mathcal{T}$ & $AE_g\mathcal{T}$\\
  \hline $(+,-)$ & $2g+2$ & $4g+4$\\
  \hline $(-,+)$ & $4g+4$ ($g$ even), $4g-4$ ($g$ odd) & $4g+4$\\
  \hline $(-,-)$ & $2g+2$ ($g$ even), $2g$ ($g$ odd) & $4g+4$\\
  \hline $(Mix)$ & --- & $2g+4$ ($g$ even), --- ($g$ odd)\\\hline
  \end{tabular}
\end{table}
\end{proposition}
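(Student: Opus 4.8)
The plan is to transfer everything to $S^3$ together with its two complementary handlebodies, reduce to linear actions, and then read off each table entry from the quotient orbifolds, with explicit model actions supplying the lower bounds. Given an embedding $e\co\Sigma_g\hookrightarrow S^3$ carrying an extendable $G$-action one obtains a smooth $G$-action on $S^3$ with $e(\Sigma_g)$ invariant; since the quotient orbifold $S^3/G$ is spherical, $G$ is conjugate into $O(4)$, and using the equivariant loop theorem one reduces to the case where $e(\Sigma_g)$ is a Heegaard surface, so $S^3=V_1\cup_{\Sigma_g}V_2$ with $V_1\cong V_2\cong V_g$, and one lets $G_0\le G$ be the (index $1$ or $2$) subgroup preserving each $V_i$. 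The decisive bookkeeping is that an element $h\in G$ preserves each $V_i$ or interchanges them according as it preserves or reverses the co-orientation of $\Sigma_g$ in $S^3$, and this co-orientation sign is the product of the sign of $h$ on $\Sigma_g$ with the sign of $h$ on $S^3$; hence the elements of local type $(+,+)$ or $(-,-)$ preserve each $V_i$ while those of local type $(+,-)$ or $(-,+)$ swap them, and, for instance, a $(-,-)$ element restricts to an orientation-reversing homeomorphism of each $V_i$. This dictionary converts each global type into a statement about a cyclic/abelian action on a handlebody for $G_0$, plus a prescribed ``doubling'' when $[G:G_0]=2$; in particular the cyclic $(Mix)$ entry is disposed of with no computation, because $(Mix)$ forces the homomorphism $G\to\mathbb{Z}_2\times\mathbb{Z}_2$ recording the two orientation signs to be surjective, which a cyclic group cannot be.

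For the upper bounds, several rows follow at once from the maxima collected in Remark \ref{Rem of classical result}: in type $(+,-)$ the whole of $G$ preserves the orientation of $\Sigma_g$, so $|G|\le A_g=4g+4$ in the abelian case; in type $(-,-)$ the index-$2$ subgroup $G_0$ is $(+,+)$-extendable, so $|G|\le 2AE_g=4g+4$ in the abelian case; in type $(-,+)$ an orientation-reversing element of $G$ is already an orientation-reversing periodic map of $\Sigma_g$, whence $CE_g(-,+)\le 4g+4$ for $g$ even and $4g-4$ for $g$ odd, while $AE_g(-,+)\le A_g^-$. The genuinely delicate bounds — cyclic $(+,-)$, cyclic $(-,-)$, abelian $(Mix)$, and (depending on how one argues it) abelian $(-,+)$ — require the orbifold analysis: from $\chi(\Sigma_g)=|G|\,\chi^{orb}(\Sigma_g/G)$ and $\chi(V_i)=|G_0|\,\chi^{orb}(V_i/G_0)$, the fact that $V_i/G_0$ is a handlebody orbifold whose boundary is the $2$-orbifold $\Sigma_g/G_0$ (together with its mirror image when $[G:G_0]=2$), and the finite list of possible singular data for cyclic (resp.\ abelian) quotients of a handlebody, one matches the data on the two sides and extracts the extremal configuration; the parity of $g$ enters exactly here, producing the even/odd dichotomies and the ``$2g+4$ versus nonexistence'' alternative in the abelian $(Mix)$ row.

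For the lower bounds I would exhibit, for each non-empty entry, an explicit orthogonal action on $S^3\subset\mathbb{C}^2$ (or the double of a linear handlebody action) preserving a standardly embedded surface, compute its genus by Riemann--Hurwitz, and verify the orientation type; the building blocks are rotations $(z_1,z_2)\mapsto(\zeta^{a}z_1,\zeta^{b}z_2)$, the reflection $(z_1,z_2)\mapsto(\bar z_1,z_2)$ and the swap $(z_1,z_2)\mapsto(z_2,z_1)$ and their products, with a second commuting generator adjoined in the abelian rows and the order-$(4g+4)$ abelian surface action realized inside $S^3$. The main obstacle I anticipate is precisely matching the refined upper bounds: the crude index-$2$ (or index-$4$) estimates overshoot in the cyclic $(+,-)$, cyclic $(-,-)$ and abelian $(Mix)$ rows, so one must actually push the handlebody-orbifold gluing constraint along $\Sigma_g$ and the genus-parity analysis all the way through, and then check that the model actions above really attain these sharper numbers in every case, for both parities of $g$.
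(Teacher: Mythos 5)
The central reduction in your plan is not valid: an embedded closed surface in $S^3$ need not be a Heegaard surface, and the equivariant loop theorem does not let you ``reduce'' to that case, because equivariant compression changes the surface (and its genus) rather than the embedding of the given $\Sigma_g$. So you may not write $S^3=V_1\cup_{\Sigma_g}V_2$ with handlebodies $V_i$, and the dictionary ``extendable $G$-action $=$ handlebody action of $G_0$ plus doubling'' on which all your refined bounds rest collapses. The paper gets around exactly this point in two ways: for the cyclic $(+,-)$ and $(-,-)$ rows it extends the $\langle h^2\rangle$-action on $\Sigma_g$ to an \emph{abstract} handlebody via the Reni--Zimmermann criterion (Lemma \ref{Lem of extend to Hand}), and wherever it needs the two complementary pieces it uses only that they are integer homology handlebodies (Lemma \ref{Lem of homology}, ``half lives half dies''), never that they are handlebodies. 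A second, smaller error: your ``easy'' bound $AE_g(-,+)\le A_g^-$ does not give the table value at $g=5$, since $A_5^-=32>24=4g+4$; the correct cheap bound is $AE_g(-,+)\le 2AE_g=4g+4$ via the index-two subgroup preserving both orientations, exactly as you argued for $(-,-)$.

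Beyond this, the rows you yourself flag as delicate are precisely where the content of the proposition lies, and your sketch supplies no argument for them. For cyclic $(+,-)$ the paper uses that an orientation-reversing $h$ on $S^3$ has a fixed point, hence so does $h^2$, so by Lemma \ref{Lem of Fix circle} the singular set of $S^3/\langle h^2\rangle$ is a single circle of index $|\langle h^2\rangle|$; this forces all indices in the classes (A--C) of Lemma \ref{Lem of hand classify} to equal $|\langle h^2\rangle|$ and yields $2g+2$. For cyclic $(-,-)$ the parity of $g$ comes from a Lefschetz fixed-point argument on a homology-handlebody side together with Lemma \ref{Lem of odd order}, forcing $|\langle h^2\rangle|$ odd. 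For abelian $(Mix)$ the odd-$g$ nonexistence is \emph{not} an Euler-characteristic or singular-data matching at all: it is an intersection-form computation in a basis of $H_1(\Sigma_g)$ adapted to the two sides, ending in $(-1)^g=|AD|^2$; and the even-$g$ bound $2g+4$ needs Smith theory ($\mathrm{Fix}(h)\cong S^0$ or $S^2$), Riemann--Hurwitz to list the possible $(g',k,|G_o|)$, geometric exclusion of three of the four configurations, plus a separate divisibility contradiction when $G_o'\cong\mathbb{Z}_2\oplus\mathbb{Z}_2$. Finally, the realizations of $4g-4$ (odd $g$, $(-,+)$) and $2g+4$ (even $g$, $(Mix)$) require specific constructions (the paper's Wheel and Fork examples); your list of orthogonal building blocks is plausible but not carried out. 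As it stands the proposal is an outline whose load-bearing steps are either missing or rest on the failed Heegaard reduction, so it does not constitute a proof.
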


We see that $CE_g(-,+)=C_g^-$ and $AE_g(+,-)=A_g$, see Remark \ref{Rem of classical result}. Namely some kinds of maximum symmetries on surfaces are extendable over $S^3$.


We also have the following result. By Remark \ref{Rem of classical result}, it will give us Theorem \ref{Thm of CandA on handlebody}.

\begin{proposition}\label{Pro of ORhandlebody}
For each genus $g>1$, $CH_g^-$ and $AH_g^-$ are given in Table \ref{tab of maxorder of ORhandlebody}, here $k, m, n\in \mathbb{Z}_+$. And $A_g^-$ is equal to $AH_g^-$.
\begin{table}[h]
\caption{$CH_g^-$ and $AH_g^-$}\label{tab of maxorder of ORhandlebody}
  \begin{tabular}{|l|l|}
  \hline $CH_g^-$ ($g$ even) & $Max\{2[m, n]\mid g=[m, n]-(m+n)/(m, n)+1, 2\nmid m, 2\nmid n\}$\\
  \hline $CH_g^-$ ($g$ odd) & $Max\{2kn\mid g=kn-k+1, 2\nmid k, 2\nmid n\}$\\
  \hline $AH_g^-$ & $4g+4$ ($g\neq 5$), $32$ ($g=5$)\\\hline
  \end{tabular}
\end{table}
\end{proposition}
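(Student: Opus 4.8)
The plan is to pass to the quotient orbifold, recast a finite action on $V_g$ as a graph of finite groups, and optimise an Euler-characteristic identity; the equality $A_g^-=AH_g^-$ is then obtained by a restriction argument combined with a Riemann--Hurwitz bound on the surface.

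\textbf{Reduction to graphs of groups.} By the structure theory of finite group actions on handlebodies (see \cite{MMZ}), a faithful $G$-action on $V_g$ is determined, up to conjugacy, by its quotient orbifold, which retracts onto a quotient graph of finite groups $\mathcal G=\Gamma/\!/G$; here $\Gamma\subset V_g$ is a $G$-invariant spine with $b_1(\Gamma)=g$, the vertex groups $G_v$ occur as point stabilisers on balls (so $G_v<O(3)$), the edge groups $G_e$ occur on meridian disks (so $G_e<O(2)$), and the incidence data together with the orientation character $\omega\colon G\to\{\pm1\}$ are subject to the obvious compatibility; conversely any such datum is realised by a handlebody action. The identity $\chi(\Gamma)=|G|\,\chi^{orb}(\mathcal G)$ becomes
\[
g-1 \;=\; |G|\Bigl(\sum_{e\in E(\mathcal G)}\frac1{|G_e|}-\sum_{v\in V(\mathcal G)}\frac1{|G_v|}\Bigr),
\]
where the bracket is positive since $g>1$; hence maximising $|G|$ for fixed $g$ means making the bracket as small as possible, and the orientation-reversing hypothesis is precisely that $\omega$ be onto.

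\textbf{The cyclic case.} Take $G=\mathbb Z_N$ with $\omega$ onto, so $N$ is even and every $G_v,G_e$ is cyclic of order dividing $N$. A cyclic group acting on $S^2$ realises only very restricted orbit types (a rotation has two fixed points with full stabiliser; a rotary reflection has one orbit of size two and, when $N\equiv 2\bmod 4$, no further exceptional orbit; an order-two reflection fixes a circle), so after collapsing valence-one and valence-two vertices whose incident edge groups equal the vertex group, $\mathcal G$ reduces to a graph on one or two vertices carrying at most two edges. Running the finite minimisation of the bracket over these configurations, and observing that surjectivity of $\omega$ forces the relevant pair of branching indices at the essential vertex to have odd relative parts, isolates exactly two extremal families: $N=2[m,n]$ with $g=[m,n]-(m+n)/(m,n)+1$ and $m,n$ odd (for $g$ even), and $N=2kn$ with $g=kn-k+1$ and $k,n$ odd (for $g$ odd). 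Explicit rotary-reflection models on handlebodies of these genera realise the listed orders, establishing the formula for $CH_g^-$.

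\textbf{The abelian case and the equality $A_g^-=AH_g^-$.} For $G$ abelian the admissible vertex groups are the finite abelian subgroups of $O(3)$ --- cyclic, $\mathbb Z_m\times\mathbb Z_2$, and $(\mathbb Z_2)^3$ --- and the edge groups are cyclic or $(\mathbb Z_2)^2$; the same minimisation, now with these local models, gives $|G|\le 4g+4$ for $g\neq 5$ and $|G|\le 32$ for $g=5$, the bound being raised in the exceptional case by a vertex of type $(\mathbb Z_2)^3$, and all these values are attained by explicit handlebody actions, so $AH_g^-$ is as claimed. For the final assertion, restriction to $\partial V_g=\Sigma_g$ is faithful and an element of $G$ reversing the orientation of $V_g$ also reverses the orientation of $\Sigma_g$, whence $AH_g^-\le A_g^-$; conversely, a faithful orientation-reversing abelian $G$-action on $\Sigma_g$ of order $N$ restricts on the index-two subgroup $\ker\omega$ to an orientation-preserving abelian action whose quotient $2$-orbifold has, for $N$ large, base $S^2$ with three or four cone points, and the nontrivial coset of $\ker\omega$ acts as an orientation-reversing symmetry of that small orbifold, which by a direct Riemann--Hurwitz analysis forces $N\le 4g+4$ for $g\neq 5$ and $N\le 32$ for $g=5$; combining this with $AH_g^-\le A_g^-$ and the handlebody constructions gives $A_g^-=AH_g^-$ (incidentally, one then need not bound $AH_g^-$ from above independently, since $AH_g^-\le A_g^-$ already suffices).

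\textbf{Main difficulty.} The technical heart is the cyclic optimisation: showing that, subject to realisability on a handlebody spine and surjectivity of $\omega$, the minimum of the Euler bracket is attained exactly on the two stated families, and in particular explaining the odd-parity conditions on $m,n$ and on $k,n$. The abelian part adds the bookkeeping of the three families of vertex groups and the isolation of the genuinely exceptional value $32$ at $g=5$, while the surface side requires a careful Riemann--Hurwitz estimate for abelian group actions on $\Sigma_g$ that contain an orientation-reversing element.
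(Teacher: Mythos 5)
Your outline follows the same general strategy as the paper (quotient handlebody orbifolds, graphs of finite groups, the Euler characteristic identity), but at the two places where the real work happens you assert rather than prove. First, the parity conditions $2\nmid m,\,2\nmid n$ (resp.\ $2\nmid k,\,2\nmid n$) are not a consequence of ``surjectivity of $\omega$'' read off from the graph of groups; nothing in your minimisation rules out, say, $G=\mathbb{Z}_{2[m,n]}$ with $m$ even. The paper gets the parity as follows: pass to the index-two subgroup $\langle h^2\rangle=\ker\omega$, show (using the lower-bound examples, which give $|\langle h^2\rangle|>g-1$) that $V_g/\langle h^2\rangle$ falls into four classes (A)--(D) of graphs of cyclic groups, and then study the induced involution $\overline{h}$ on the quotient orbifold: by a Lefschetz fixed point argument applied to a piece of odd Euler characteristic (the singular set, or the complement of its neighbourhood), $\overline{h}$ has a fixed point, and a covering-orbit counting lemma then shows $|\langle h^2\rangle|/|G_x|$ is odd at that point. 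In classes (B)--(D) this forces $|\langle h^2\rangle|=[m,n]$ (resp.\ $kn$) to be odd, which is exactly the stated parity; in class (A) it forces an even index at the fixed point and pins the types down to $\{2,3,3\}$ and $\{2,2,2k\}$, whose orders must then be checked to be dominated by the two families. Your sketch contains no mechanism playing this role, and without one the claimed formula for $CH_g^-$ is not established.

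Second, for $A_g^-\le AH_g^-$ your ``direct Riemann--Hurwitz analysis'' is not carried out, and its premise is false as stated: for the $(\oplus_{i=1}^5\mathbb{Z}_2)$-action on $\Sigma_5$ of order $32$ the orientation-preserving subgroup has order $16$ and $\Sigma_5/G_o\cong S^2(2,2,2,2,2)$, so the quotient need not have three or four cone points even at maximal order, and the exceptional value $32$ at $g=5$ is precisely what a crude Riemann--Hurwitz count will not isolate. The paper's argument is different and short: the deck involution $\tau$ of $\Sigma_g/G_o\to\Sigma_g/G$ reverses orientation and pairs the cone points of index $>2$; since $G$ is abelian this pairing satisfies $\phi(x_s)=\phi(x_t)^{-1}$, so by the Reni--Zimmermann extension criterion the $G_o$-action extends over a handlebody, whence $|G_o|\le AH_g$ and $A_g^-\le 2AH_g=AH_g^-$ (the reverse inequality being the restriction argument you give). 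Relatedly, you do not need to redo an abelian minimisation to bound $AH_g^-$: the trivial inequality $AH_g^-\le 2AH_g$ together with the known values of $AH_g$ ($2g+2$ for $g\neq5$, $16$ for $g=5$) already gives the table, and the examples (the cage action $\langle\tau_g^2,\rho,\sigma\rangle$ and the square action on $V_5$) give the lower bounds.
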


In \S2, we will give some examples which will give the lower bounds of maximum orders we have defined. In \S3, we will give some known facts about group actions on manifolds, which will be used later. In \S4, we will prove Proposition \ref{Pro of ORhandlebody}. Some results in \S4 will be used in \S5, where we will prove Proposition \ref{Pro of maxorder of five type}.


\section{Examples}\label{Sec of examples}

In this section we will give three examples about extendable actions and one example of non-extendable action. The surfaces we constructed below may be non-smooth, but using orbifold theory discussed in \S3, they can be replaced by smooth ones. The following lemma is easy to verified.




\begin{lemma}\label{Lem of change sides}
Let $\Sigma_g$ be a closed subsurface in $S^3$. Suppose $h$ is an automorphism of $S^3$ preserving $\Sigma_g$. If some of the following conditions (a), (b) and (c) happen, then exactly two of them happen.

(a). $h$ reveres the orientation of $S^3$.

(b). $h$ reveres the orientation of $\Sigma_g$.

(c). $h$ changes the two sides of $\Sigma_g$.
\end{lemma}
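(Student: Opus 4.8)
The plan is to reduce the whole statement to one multiplicativity relation among three signs $\pm 1$ attached to $h$. First I would recall the standard topological facts: a closed orientable surface embedded in $S^3$ is separating, since $H_1(S^3;\mathbb{Z}/2)=0$ forces $[\Sigma_g]$ to be null-homologous, so $S^3\setminus\Sigma_g$ has exactly two components $W_1,W_2$; and $\Sigma_g$ is two-sided, its normal line bundle being trivial. I then fix once and for all an orientation of $S^3$, an orientation of $\Sigma_g$, and a labeling of the two sides (equivalently a co-orientation $n$ of $\Sigma_g$), chosen compatibly: at every $p\in\Sigma_g$ an oriented frame $(v_1,v_2)$ of $T_p\Sigma_g$ followed by the chosen normal $n$ is a positively oriented frame of $T_pS^3$.

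Next I attach three signs to $h$: set $\epsilon_a=-1$ exactly when $h$ reverses the orientation of $S^3$ (condition (a)); $\epsilon_b=-1$ exactly when $h$ reverses the orientation of $\Sigma_g$ (condition (b)); and $\epsilon_c=-1$ exactly when $h$ interchanges $W_1$ and $W_2$ (condition (c)). The sign $\epsilon_c$ is well defined because $h$ preserves $\Sigma_g$ setwise and hence permutes the two complementary components, so it either fixes both or swaps them. The crux is a local frame comparison: at a point $p\in\Sigma_g$, push forward the reference frame to get $(dh_p v_1, dh_p v_2, dh_p n)$ at $h(p)$ and compare it with the positively oriented reference frame there. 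The first two vectors contribute the factor $\epsilon_b$, while $dh_p n$ points to the chosen side of $\Sigma_g$ precisely when $h$ preserves sides, contributing the factor $\epsilon_c$; reading off the orientation of the whole frame gives $\epsilon_a=\epsilon_b\,\epsilon_c$.

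From $\epsilon_a=\epsilon_b\epsilon_c$ we obtain $\epsilon_a\epsilon_b\epsilon_c=\epsilon_b^2\epsilon_c^2=1$, so the number of the three signs that equal $-1$ is even, i.e. $0$ or $2$; it can never be exactly $1$ or exactly $3$. Hence if at least one of (a), (b), (c) holds, then exactly two of them hold, which is the claim.

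The topological ingredients — separation of $S^3$ by $\Sigma_g$, two-sidedness, and the fact that an ambient automorphism permutes the two complementary regions — are routine. The only point needing care is the bookkeeping in the local frame comparison producing $\epsilon_a=\epsilon_b\epsilon_c$, and using the chosen side-labeling consistently at $p$ and at $h(p)$; I expect that short sign-chase to be the main (and minor) obstacle.
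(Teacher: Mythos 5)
Your proposal is correct and is exactly the routine verification the paper leaves to the reader: with a co-orientation compatible with the orientations of $\Sigma_g$ and $S^3$, the sign relation $\epsilon_a=\epsilon_b\epsilon_c$ forces an even number of the conditions (a), (b), (c) to hold, which is the statement. The only implicit point, that the signs are well defined globally, follows from connectedness of $\Sigma_g$ and of $S^3$, so there is no gap.
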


\begin{example}[Cage]\label{Ex of cage}
Given $g>1$, we will construct some cyclic and abelian group actions on a Heegaard splitting of $S^3$.

Let $S^3$ be the unit sphere in $\mathbb{C}^2$:
$$S^3=\{(z_1, z_2)\in\mathbb{C}^2\mid |z_1|^2+|z_2|^2=1\}.$$
Let
\begin{align*}
a_m&=(e^{\frac{m\pi}{2}i}, 0), m=0, 1, 2, 3,\\
b_n&=(0, e^{\frac{n\pi}{g+1}i}), n=0, 1, \cdots, 2g+1.
\end{align*}
Connect each $a_{2l}$ to each $b_{2k}$ with the shortest geodesic in $S^3$, and connect each $a_{2l+1}$ to each $b_{2k+1}$ with the shortest geodesic in $S^3$, here $l=0, 1$ and $k=0, 1, \cdots, g$. Then we get two graphs $\Gamma_g^c, {\Gamma_g^c}'\in S^3$. $\Gamma_g^c$ contains $a_{2l}$, $b_{2k}$, and ${\Gamma_g^c}'$ contains $a_{2l+1}$, $b_{2k+1}$. Each graph has $g+3$ vertices and $2g+2$ edges. They are in the dual positions as in Figure \ref{fig of cages} ($g=2$), here graphs have been projected into the three dimensional Euclidean space $E^3$ from $S^3-\{(-1, 0)\}$. The vertex $a_2$ is at the infinity.

\begin{figure}[h]
\centerline{\scalebox{0.48}{\includegraphics{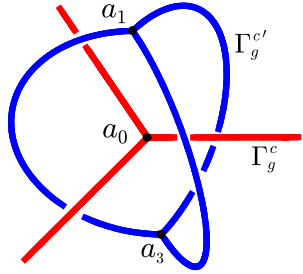}}}
\caption{$\Gamma_g^c$ and ${\Gamma_g^c}'$ in $S^3$}\label{fig of cages}
\end{figure}

Let $T$ be the following torus in $S^3$:
$$T=\{(z_1, z_2)\in S^3\mid |z_1|=|z_2|=\frac{\sqrt{2}}{2}\}.$$
It splits $S^3$ into two solid tori $V_1$ and $V_2$:
\begin{align*}
V_1&=\{(z_1, z_2)\in S^3\mid |z_1|\geq \frac{\sqrt{2}}{2}\},\\
V_2&=\{(z_1, z_2)\in S^3\mid |z_2|\geq \frac{\sqrt{2}}{2}\}.
\end{align*}
Let $D_{1,m}$ and $D_{2,n}$ be following meridian disks in $V_1$ and $V_2$ respectively:
\begin{align*}
D_{1,m}&=\{(\sqrt{1-r^2}e^{\frac{\pi}{4}i+\frac{m\pi}{2}i}, re^{\theta i})\in S^3\mid 0\leq r\leq\frac{\sqrt{2}}{2}, \theta\in \mathbb{R}\},\\
D_{2,n}&=\{(re^{\theta i}, \sqrt{1-r^2}e^{\frac{\pi}{2g+2}i+\frac{n\pi}{g+1}i})\in S^3\mid 0\leq r\leq\frac{\sqrt{2}}{2}, \theta\in \mathbb{R}\}.
\end{align*}
Here $m=0, 1, 2, 3$ and $n=0, 1, \cdots, 2g+1$. All the $D_{1,m}$ cut $V_1$ into $4$ cylinders. And each cylinder contains exactly one $a_m$ for some $m$. All the $D_{2,n}$ cut $V_2$ into $2g+2$ cylinders. And each cylinder contains exactly one $b_n$ for some $n$. Hence $T$ and all these disks cut $S^3$ into $2g+6$ cylinders. Let $V_g^c$ be the union of cylinders intersecting $\Gamma_g^c$, and ${V_g^c}'$ be the union of cylinders intersecting ${\Gamma_g^c}'$. Then $V_g^c$ and ${V_g^c}'$ are two handlebodies. $\partial V_g^c=\partial {V_g^c}'$ is a Heegaard surface of $S^3$, and $\partial V_g^c\cong\Sigma_g$.

Following three isometries on $S^3$ preserve the graph $\Gamma_g^c\cup{\Gamma_g^c}'$, the torus $T$, and the union of the disks. They also preserve $\partial V_g^c$.
\begin{align*}
\tau_g: (z_1, z_2)&\mapsto(iz_1, e^{\frac{\pi}{g+1}i}z_2)\\
\rho: (z_1, z_2)&\mapsto(-z_1, z_2)\\
\sigma: (z_1, z_2)&\mapsto(\bar{z}_1, z_2)
\end{align*}

For even $g>1$, $\tau_g$ has order $4g+4$ and $\tau_g^2\rho\sigma$ has order $2g+2$.

For each $g>1$, $\tau_g\sigma$ has order $2g+2$ and $\langle\tau_g\sigma, \rho\rangle$, $\langle\tau_g, \rho\rangle$,
$\langle\tau_g^2, \rho, \sigma\rangle$ are all abelian groups of
order $4g+4$.

Notice that $\tau_g$ and $\rho$ preserve the orientation of $S^3$, and $\sigma$ reverses the orientation of $S^3$. Only $\tau_g$ changes the two sides of $\partial V_g^c$. Then combining with Lemma \ref{Lem of change sides}, we have extendable cyclic (abelian) group actions on $\Sigma_g\cong \partial V_g^c$ as following:

(1). For even $g>1$, $\langle\tau_g\rangle$ gives a type $(-,+)$ extendable cyclic group action of order $4g+4$. This also realizes the maximum order of cyclic group actions on $\Sigma_g$ when $g$ is even.

(2). For even $g>1$, $\langle\tau_g^2\rho\sigma\rangle$ gives a type $(-,-)$ extendable cyclic group action of order $2g+2$.

(2$'$). For odd $g>1$, by adding shortest geodesics $a_0a_1$ and $a_0a_3$ to ${\Gamma_{g-1}^c}'$, we get a graph $\Gamma_g$. The group $\langle\tau_{g-1}^2\rho\sigma\rangle$ preserves $\Gamma_g$. Hence it also preserves some regular neighbourhood $N(\Gamma_g)$ of $\Gamma_g$. Then $\langle\tau_{g-1}^2\rho\sigma\rangle$ gives a type $(-,-)$ extendable cyclic group action of order $2g$ on $\Sigma_g\cong\partial N(\Gamma_g)$.

(3). For each $g>1$, $\langle\tau_g\sigma\rangle$ gives a type $(+,-)$ extendable cyclic group action of order $2g+2$.

(4). For each $g>1$, $\langle\tau_g\sigma, \rho\rangle$ gives a type $(+,-)$ extendable abelian group action of order $4g+4$. This also realizes the maximum order of orientation-preserving abelian group actions on $\Sigma_g$.

(5). For each $g>1$, $\langle\tau_g, \rho\rangle$ gives a type $(-,+)$ extendable abelian group action of order $4g+4$.

(6). For each $g>1$, $\langle\tau_g^2, \rho, \sigma\rangle$ gives a type $(-,-)$ extendable abelian group action of order $4g+4$.
\end{example}

\begin{example}[Wheel]\label{Ex of wheel}
Given odd $g>1$, we will construct a cyclic group
action on a Heegaard splitting of $S^3$.

Let $S^3$ and $T$ be in $\mathbb{C}^2$ as above. Let $L=L_1\cup L_2$ be the $(2, 4)$-torus link in $T$:
\begin{align*}
L_1&=\{(\frac{\sqrt{2}}{2}e^{2\theta i}, \frac{\sqrt{2}}{2}e^{\theta i})\in S^3\mid \theta\in\mathbb{R}\},\\
L_2&=\{(\frac{\sqrt{2}}{2}e^{2\theta i}, \frac{\sqrt{2}}{2}ie^{\theta i})\in S^3\mid \theta\in\mathbb{R}\}.
\end{align*}
Let
\begin{align*}
a_m&=(\frac{\sqrt{2}}{2}e^{\frac{2m\pi}{g-1}i}, \frac{\sqrt{2}}{2}e^{\frac{2m\pi}{2g-2}i}), m=0, 1, \cdots, 2g-3,\\
b_n&=(\frac{\sqrt{2}}{2}e^{\frac{(2n+1)\pi}{g-1}i}, \frac{\sqrt{2}}{2}ie^{\frac{(2n+1)\pi}{2g-2}i}), n=0, 1, \cdots, 2g-3.
\end{align*}
Then $a_0, a_1, \cdots, a_{2g-3}$ are $2g-2$ points in $L_1$, and $b_0, b_1, \cdots, b_{2g-3}$ are $2g-2$ points in $L_2$. Connect $a_i$ to $a_{i+g-1}$ with the shortest geodesic in $S^3$, and connect $b_i$ to $b_{i+g-1}$ with the shortest geodesic in $S^3$, here $i=0, 1, \cdots, g-2$. Then we get two graphs $\Gamma_g^w, {\Gamma_g^w}'\in S^3$. $\Gamma_g^w$ contains $L_1$ and ${\Gamma_g^w}'$ contains $L_2$. Each graph has $2g-2$ vertices and $3g-3$ edges. They are in the dual positions as in Figure \ref{fig of wheels} ($g=3$), here graphs have been projected into the three dimensional Euclidean space $E^3$ from $S^3-\{(-1, 0)\}$. The middle point of $a_1a_3$ is at the infinity.

\begin{figure}[h]
\centerline{\scalebox{0.67}{\includegraphics{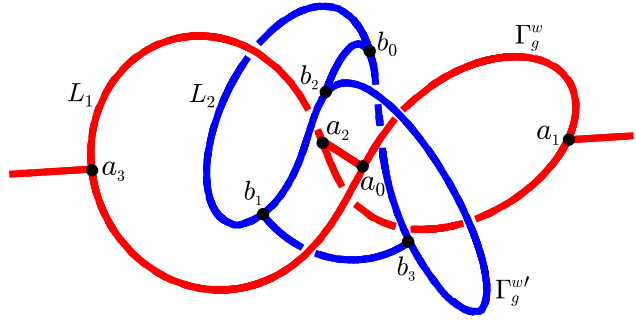}}}
\caption{$\Gamma_g^w$ and ${\Gamma_g^w}'$ in $S^3$}\label{fig of wheels}
\end{figure}

Let $T'$ be the following torus in $S^3$:
$$T'=\{(z_1, z_2)\in S^3\mid |z_1|=\frac{\sqrt{3}}{2}, |z_2|=\frac{1}{2}\}.$$
It splits $S^3$ into two solid tori ${V_1}'$ and ${V_2}'$:
\begin{align*}
{V_1}'&=\{(z_1, z_2)\in S^3\mid |z_1|\geq\frac{\sqrt{3}}{2}\},\\
{V_2}'&=\{(z_1, z_2)\in S^3\mid |z_2|\geq\frac{1}{2}\}.
\end{align*}
Let $D_k$ be the following meridian disk in ${V_1}'$:
$$D_k=\{(\sqrt{1-r^2}e^{\frac{\pi}{2g-2}i+\frac{k\pi}{g-1}i}, re^{\theta i})\in S^3\mid 0\leq r\leq \frac{1}{2}, \theta\in \mathbb{R}\}.$$
It lies between $(e^{\frac{k\pi}{g-1}i}, 0)$ and $(e^{\frac{(k+1)\pi}{g-1}i}, 0)$, here $k=0, 1, \ldots, 2g-3$.
Then these disks cut ${V_1}'$ into $2g-2$ cylinders. And each cylinder intersects exactly one of $\Gamma_g^w$ and ${\Gamma_g^w}'$. Let $A$ be the annulus in ${V_2}'$ separating $L_1$ and $L_2$:
$$A=\{(re^{2\theta i}, \sqrt{1-r^2}e^{\frac{\pi}{4}i+\theta i})\in S^3\mid -\frac{\sqrt{3}}{2}\leq r\leq\frac{\sqrt{3}}{2}, \theta\in\mathbb{R}\}.$$
Then $A$ cuts ${V_2}'$ into $2$ solid tori. Let $V_g^w$ be the
union of the solid torus and cylinders intersecting $\Gamma_g^w$, and ${V_g^w}'$ be the union of the solid torus and cylinders intersecting ${\Gamma_g^w}'$. Then $V_g^w$ and ${V_g^w}'$ are two handlebodies. $\partial V_g^w=\partial {V_g^w}'\cong\Sigma_g$, and $\partial V_g^w$ is a Heegaard surface of $S^3$.

Following isometry on $S^3$ preserves the graph $\Gamma_g^w\cup{\Gamma_g^w}'$, the torus $T'$, the annulus $A$, and the union of the disks. It also preserves $\partial V_g^w$.
$$\varphi_g: (z_1, z_2)\mapsto(e^{\frac{\pi}{g-1}i}z_1, ie^{\frac{\pi}{2g-2}i}z_2)$$

For odd $g>1$, $\varphi_g$ has order $4g-4$. Notice that $\varphi_g$ preserves the orientation of $S^3$ and changes the two sides of $\partial V_g^w$. Combining with Lemma \ref{Lem of change sides}, for odd $g>1$, $\langle\varphi_g\rangle$ gives a type $(-,+)$ extendable cyclic group action of order $4g-4$ on $\Sigma_g\cong \partial V_g^w$. This also realizes the maximum order of orientation-reversing periodic maps on $\Sigma_g$ when $g$ is odd.
\end{example}

\begin{example}[Fork]\label{Ex of fork}
Given even $g>1$, we will construct an abelian
group action on a Heegaard splitting of $S^3$.

Let $S^3$ be the unit sphere in $\mathbb{C}^2$:
$$S^3=\{(z_1, z_2)\in\mathbb{C}^2\mid |z_1|^2+|z_2|^2=1\}.$$
Let
\begin{align*}
a_m&=(e^{\frac{(2m+1)\pi}{2}i}, 0), m=0, 1,\\
b_n&=(0, e^{\frac{2n\pi}{g+2}i}), n=0, 1, \cdots, g+1.
\end{align*}
Connect $a_0$ to each $b_{2k}$ with the shortest geodesics in $S^3$, and connect $a_1$ to each $b_{2k+1}$ with the shortest geodesics in $S^3$, here $k=0, 1, \cdots, g/2$. Then we get two graphs $\Gamma^f_g, {\Gamma^f_g}'\in S^3$.  $\Gamma^f_g$ contains $a_0$, $b_{2k}$, and ${\Gamma^f_g}'$ contains $a_1$, $b_{2k+1}$. Let $S$ be the following two dimensional sphere in $S^3$:
$$S=\{(z_1, z_2)\in\mathbb{C}^2\mid z_1\in \mathbb{R}\}\cap S^3.$$
It cuts $S^3$ into $2$ three dimensional balls $B_0$ and $B_1$. $B_0$ contains $a_0$ and $B_1$ contains $a_1$. $\Gamma^f_g$, ${\Gamma^f_g}'$ and $S$ are shown in Figure \ref{fig of forks} ($g=4$), here they have been projected into the three dimensional Euclidean space $E^3$ from $S^3-\{(-1, 0)\}$.

\begin{figure}[h]
\centerline{\scalebox{0.385}{\includegraphics{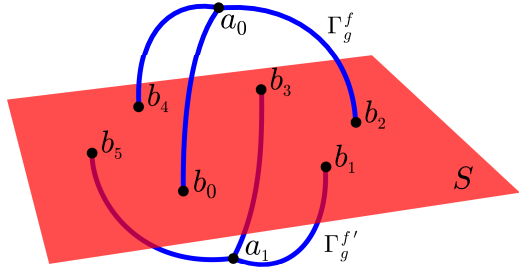}}}
\caption{$\Gamma^f_g$, ${\Gamma^f_g}'$ and $S$ in $S^3$}\label{fig of forks}
\end{figure}

Following two isometries on $S^3$ preserve the sphere $S$ and the graph $\Gamma^f_g\cup{\Gamma^f_g}'$.
\begin{align*}
\tau_{g+1}^2: (z_1, z_2)&\mapsto(-z_1, e^{\frac{2\pi}{g+2}i}z_2)\\
\rho\sigma: (z_1, z_2)&\mapsto(-\bar{z}_1, z_2)
\end{align*}
Hence they also preserve some closed regular neighbourhood $N(\Gamma^f_g)\cup N({\Gamma^f_g}')$ of $\Gamma^f_g\cup{\Gamma^f_g}'$, and preserve the common boundary of the two handlebodies $V^f_g$ and ${V^f_g}'$:
\begin{align*}
V^f_g&=N(\Gamma^f_g)\cup \overline{B_1-N({\Gamma^f_g}')},\\
{V^f_g}'&=N({\Gamma^f_g}')\cup \overline{B_0-N(\Gamma^f_g)}.
\end{align*}
The surface $\partial V^f_g=\partial {V^f_g}'$ is a Heegaard surface of $S^3$, and $\partial V^f_g\cong\Sigma_g$.

Since $\tau_{g+1}^2\rho\sigma=\rho\sigma\tau_{g+1}^2$, $\langle\tau_{g+1}^2, \rho\sigma\rangle$ is an abelian group of order $2g+4$. Since $\tau_{g+1}^2$ preserves the orientation of $S^3$, $\rho\sigma$ reverses the orientation of $S^3$, and only $\tau_{g+1}^2$ changes the two sides of $\partial V^f_g$, combining with Lemma \ref{Lem of change sides}, one can check that $\tau_{g+1}^2\rho\sigma$ and $\tau_{g+1}^2$ satisfy the condition of type $(Mix)$ extendable group action. Hence for even $g>1$, $\langle\tau_{g+1}^2, \rho\sigma\rangle$ gives a type $(Mix)$ extendable abelian group action of order $2g+4$ on $\Sigma_g\cong \partial V^f_g$.
\end{example}

\begin{example}[Square]\label{Ex of Square}
We will construct a $\oplus^5_{i=1}\mathbb{Z}_2$-action on $V_5$.

Let $\Gamma^s$ be the boundary of the square $[0,1]^2$ in $xy$-plane in the three dimensional Euclidean space $E^3$. Let $T_r$ and ${T_r}'\subset T_r$ be following translation groups:
\begin{align*}
T_r&=\{(a, b, c)\mid a, b, c\in \mathbb{Z}\},\\
{T_r}'&=\langle(2, 0, 0), (0, 2, 0), (0, 0, 1)\rangle.
\end{align*}
An element $t=(a, b, c)\in T_r$ acts on $E^3$ as following:
$$t: (x, y, z)\mapsto(x+a, y+b, z+c).$$
Following three isometries $r_x$, $r_y$ and $R_z$ on $E^3$ preserve the graph $\bigcup_{t\in T_r}t(\Gamma^s)$.
\begin{align*}
r_x: (x, y, z)&\mapsto(x, -y, -z)\\
r_y: (x, y, z)&\mapsto(-x, y, -z)\\
R_z: (x, y, z)&\mapsto(x, y, -z)
\end{align*}
In the three dimensional torus $E^3/{T_r}'\cong T^3$ we have a graph $\Gamma^s_5=(\bigcup_{t\in T_r}t(\Gamma^s))/{T_r}'$. It has $4$ vertices and $8$ edges. Since ${T_r}'$ is a normal subgroup of $\langle T_r, r_x, r_y, R_z\rangle$, the quotient group $\langle T_r, r_x, r_y, R_z\rangle/{T_r}'\cong\oplus^5_{i=1}\mathbb{Z}_2$ acts on $E^3/{T_r}'$, and it preserves $\Gamma^s_5$. Then it also preserves some closed regular neighbourhood $N(\Gamma^s_5)\cong V_5$ of $\Gamma^s_5$.

We see this $\oplus^5_{i=1}\mathbb{Z}_2$-action on $V_5$ is ``extendable over $T^3$''. But even on $\partial V_5\cong\Sigma_5$, it is not extendable over $S^3$, by Theorem \ref{Thm of maximum of CandA action}.
\end{example}

\section{Preliminaries for the proofs}\label{Sec of preliminaries}

In this section we will give some known results about actions of discrete groups on surfaces and three manifolds, which will be used later. Most results will be presented in the language of orbifold.

\begin{remark}
For orbifold theories, one can see \cite{BMP,MMZ,Th,Zi1}. In the following, contents in \S\ref{Subsec of OT} can be found in \cite{BMP}. Lemma \ref{Lem of 2Disorb} can be found in \cite{Ei}; Lemma \ref{Lem of 3Disorb} and \ref{Lem of 3SphCyc} rely on \cite{BMP,BP,Li,Sm,Th}: \cite{Li} for the orbifold having isolated singular points, and \cite{BMP,BP,Sm,Th} for the orbifold whose singular set has dimension at least $1$; Lemma \ref{Lem of Hanorb}, \ref{Lem of Euler char equ} and \ref{Lem of handcover} can be found in \cite{MMZ,Zi1}; Lemma \ref{Lem of extend to Hand} can be found in \cite{RZ2}.

\end{remark}


\subsection{Basic concepts in orbifold theory}\label{Subsec of OT}

Let $M$ be a n-dimensional manifold with a faithful smooth action of a discrete group $G$, then $M/G$ is an n-orbifold. For any $x\in M/G$, let $x'$ be one of its pre-images in $M$, and $St(x')$ be the stable subgroup of $x'$. Then the isomorphic type of $St(x')$ does not depend on the choice of $x'$. Denote it by $G_x$. It is the local group of $M/G$ at $x$. The order of $G_x$ is the index of $x$. If $|G_x|>1$, $x$ is a singular point of $M/G$. Otherwise $x$ is a regular point of $M/G$. If we forget all local groups and indices, then we get the underlying space $|M/G|$, which is a topological space with the quotient topology.

The orbifold $M/G$ is orientable if $M$ is orientable and $G$ preserves the orientation of $M$. $M/G$ is connected if $|M/G|$ is connected. And $M/G$ is compact if $|M/G|$ is compact. $M/G$ has non-empty boundary $\partial M/G$ if $M$ has non-empty boundary $\partial M$, then $\partial M/G$ is the orbifold $(\partial M)/G$. $M/G$ is closed if it is connected, compact and has empty boundary.

Two orbifolds $M_1/G_1$ and $M_2/G_2$ are homeomorphic via $p:M_1/G_1\rightarrow M_2/G_2$ if the induced map $|M_1/G_1|\rightarrow |M_2/G_2|$ is a homeomorphism, $p$ preserves local groups, and at any point $p$ can be locally lifted to an equivariant homeomorphism between open sets of $M_1$ and $M_2$.

If $M/G\cong M'/G'$ and $M'$ is simply connected, then the fundamental group $\pi_1(M/G)\cong G'$. Covering spaces of $M/G$ can be defined, and there is a one to one correspondence between covering spaces and conjugacy classes of subgroups of $\pi_1(M/G)$. Regular covering spaces will correspond to normal subgroups. A similar Van-Kampen theorem is also valid.

\begin{definition}\label{Def of isoP}
Let $P_T$ (respectively $P_O$, $P_I$) be a regular tetrahedron (respectively octahedron, icosahedron) centered at $(0,0,0)$ in $E^3$. For a polyhedron $P$ in $E^3$, define $I^+(P)$ to be the orientation-preserving isometric group of $P$.

For $n\in \mathbb{Z}_+$, define $r_n$ to be the following isometry on $E^3$:
$$r_n: (x,y,z) \mapsto (x\cos\frac{2\pi}{n}+y\sin\frac{2\pi}{n}, -x\sin\frac{2\pi}{n}+y\cos\frac{2\pi}{n}, z).$$


\end{definition}


\begin{lemma}\label{Lem of loc2orb}
Let $x$ be a point in an orientable 2-orbifold, then $x$ has a neighbourhood which is homeomorphic to some $E^2/\langle r_n\rangle$, and $G_x\cong \langle r_n\rangle$, $n\in \mathbb{Z}_+$. Here $E^2$ is the $xy$-plane in $E^3$.

\end{lemma}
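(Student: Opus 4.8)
The plan is to reduce the statement to the classification of finite subgroups of the orthogonal group $O(2)$. Write the orientable $2$-orbifold as $M/G$ with $M$ an orientable surface on which the discrete group $G$ acts faithfully, smoothly and orientation-preservingly; this presentation is part of the set-up in \S\ref{Subsec of OT}. Fix a pre-image $x'\in M$ of $x$ and let $G_x=St(x')$, a finite group acting smoothly on $M$ with $x'$ as a fixed point.

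First I would linearise the action of $G_x$ near $x'$. Averaging an arbitrary Riemannian metric on a $G_x$-invariant neighbourhood of $x'$ over the finite group $G_x$ produces a $G_x$-invariant metric; the Riemannian exponential map at $x'$ then gives a $G_x$-equivariant diffeomorphism from a disk in the tangent plane $T_{x'}M$ onto a neighbourhood of $x'$, intertwining the $G_x$-action with its linear isotropy representation $G_x\hookrightarrow GL(T_{x'}M)$. Because the metric is invariant this representation lands in $O(T_{x'}M)\cong O(2)$, and because $G$, hence $G_x$, preserves the orientation of $M$, it lands in $SO(2)$. Identifying $T_{x'}M$ with the $xy$-plane $E^2$, we have realised $G_x$ as a finite subgroup of $SO(2)$.

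Next I would invoke the elementary fact that a finite subgroup of $SO(2)$ is cyclic, generated by the rotation through angle $2\pi/n$ with $n=|G_x|$; after an orthogonal change of coordinates this generator is exactly $r_n$ of Definition \ref{Def of isoP}, restricted to $E^2$. Hence $G_x\cong\langle r_n\rangle$, and a $G_x$-invariant disk neighbourhood $U$ of $x'$ descends to a neighbourhood $U/G_x$ of $x$ in $M/G$ that is orbifold-homeomorphic to a neighbourhood of the cone point in $E^2/\langle r_n\rangle$. Finally, since $\langle r_n\rangle$ acts linearly, $E^2/\langle r_n\rangle$ is a metric cone on a circle, and scaling radially shows every neighbourhood of its cone point is homeomorphic to all of $E^2/\langle r_n\rangle$; this upgrades the local homeomorphism to $U/G_x\cong E^2/\langle r_n\rangle$, which is what is claimed.

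I expect the only real content to be the linearisation step: one must ensure that the smooth finite action is genuinely \emph{conjugate} to an orthogonal action near the fixed point, not merely tangent to one. The averaging-of-metrics argument settles this cleanly (and in any dimension), so the remaining ingredients, namely the classification of finite subgroups of $SO(2)$ and the conical rigidity of the quotient near the cone point, are routine.
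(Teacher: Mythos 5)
Your proof is correct: the paper itself gives no argument for this lemma, stating it as a known fact from the orbifold literature (the remark at the start of \S\ref{Sec of preliminaries} refers the reader to \cite{BMP} for everything in \S\ref{Subsec of OT}), and your argument is the standard one that such references use. The key points are all in place: within the paper's framework an orbifold is a global quotient $M/G$, so fixing a pre-image $x'$ and averaging a metric over the finite stabilizer $G_x$ lets the exponential map linearise the action equivariantly; orientability of $M/G$ in the paper's sense means $G$, hence $G_x$, preserves the orientation of $M$, so the isotropy representation lands in $SO(2)$, whose finite subgroups are cyclic and conjugate to $\langle r_n\rangle$; and the radial scaling remark correctly upgrades ``a neighbourhood of the cone point'' to all of $E^2/\langle r_n\rangle$, with the resulting identification $U/G_x\cong E^2/\langle r_n\rangle$ satisfying the paper's definition of orbifold homeomorphism since it is induced by an equivariant diffeomorphism. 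The only implicit ingredient worth flagging is that finiteness of $G_x$ uses proper discontinuity of the $G$-action, which is part of what makes $M/G$ an orbifold in the first place.
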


\begin{lemma}\label{Lem of loc3orb}
Let $x$ be a point in an orientable 3-orbifold, then $x$ has a neighbourhood which is homeomorphic to some $E^3/H$, and $G_x\cong H$. $H$ is one of the groups $\langle r_n\rangle, \langle r_n, r_x\rangle, I^+(P_T), I^+(P_O), I^+(P_I), n\in \mathbb{Z}_+$. Here $r_x$ is defined in Example \ref{Ex of Square}


\begin{figure}[h]
\centerline{\scalebox{0.6}{\includegraphics{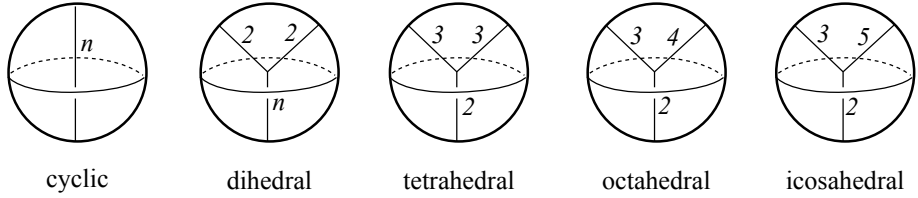}}}
\caption{Local models of 3-orbifolds}\label{fig of 3orbiLoc}
\end{figure}
\end{lemma}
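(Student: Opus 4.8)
The plan is to reduce Lemma \ref{Lem of loc3orb} to the classical classification of finite subgroups of the rotation group $SO(3)$, exactly as one proves its two-dimensional analogue Lemma \ref{Lem of loc2orb} from the (trivial) classification of finite subgroups of $SO(2)$.

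First I would linearize the local action. Pick a lift $\tilde x\in M$ of $x$; its stabilizer $G_{\tilde x}$ is a finite group isomorphic to $G_x$ and it fixes $\tilde x$. Averaging an arbitrary Riemannian metric on a small $G_{\tilde x}$-invariant ball about $\tilde x$ makes $G_{\tilde x}$ act by isometries, and the exponential map at $\tilde x$ then carries a neighbourhood of $\tilde x$ equivariantly onto a neighbourhood of $0$ in $T_{\tilde x}M\cong E^3$, where $G_{\tilde x}$ acts through its differential, i.e.\ by a faithful orthogonal representation $\rho\co G_{\tilde x}\hookrightarrow O(3)$. Hence a neighbourhood of $x$ in $M/G$ is homeomorphic to $E^3/\rho(G_{\tilde x})$ with local group $\rho(G_{\tilde x})\cong G_x$. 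Since the orbifold is orientable, $M$ is orientable and $G$ preserves its orientation, so $\rho(G_{\tilde x})\subset SO(3)$.

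Next I would invoke the classical theorem (going back to Klein) that every finite subgroup of $SO(3)$ is conjugate in $O(3)$ --- equivalently, after an orthonormal change of coordinates on $E^3$ --- to one of: the cyclic group $\langle r_n\rangle\cong\mathbb{Z}_n$ of rotations about the $z$-axis; the dihedral group $\langle r_n,r_x\rangle\cong D_n$ (note $\det r_n=\det r_x=1$, so this really lies in $SO(3)$, and $r_n,r_x$ generate a group of order $2n$); or one of the three exceptional groups $I^+(P_T)\cong A_4$, $I^+(P_O)\cong S_4$, $I^+(P_I)\cong A_5$. As $-I$ is central in $O(3)=SO(3)\times\{\pm I\}$, any conjugator can be chosen in $SO(3)$, so the coordinate change can be taken orientation-preserving. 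Applying this to $\rho(G_{\tilde x})$ produces the asserted local model $E^3/H$, with $H$ on the stated list and $G_x\cong H$; the five models are the ones drawn in Figure \ref{fig of 3orbiLoc}.

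I do not expect any real obstacle: the first step is the standard linearization of a finite smooth action near a fixed point, and the second is a 19th-century classification that I would simply cite. The only thing demanding care is the bookkeeping --- checking that the explicit generators $r_n$ and $r_x$ of Definition \ref{Def of isoP} and Example \ref{Ex of Square} indeed realize the cyclic and dihedral groups inside $SO(3)$, and that the symmetry groups $I^+(P_T)$, $I^+(P_O)$, $I^+(P_I)$ are $A_4$, $S_4$, $A_5$ --- all routine.
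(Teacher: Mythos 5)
Your argument is correct, and it is essentially the standard proof of this fact: the paper itself gives no proof, quoting the lemma from the orbifold literature (\cite{BMP}), and the argument behind that citation is exactly your two steps --- Bochner linearization of the finite stabilizer at a lift of $x$ (averaged metric plus equivariant exponential chart, giving a faithful representation into $SO(3)$ by orientability), followed by Klein's classification of finite subgroups of $SO(3)$ as cyclic, dihedral, tetrahedral, octahedral or icosahedral, which are realized by $\langle r_n\rangle$, $\langle r_n,r_x\rangle$, $I^+(P_T)$, $I^+(P_O)$, $I^+(P_I)$. The only point you leave tacit is that the stabilizer $G_{\tilde x}$ is finite and that a sufficiently small invariant ball about $\tilde x$ meets its $G$-translates only via $G_{\tilde x}$ (proper discontinuity), which is what makes the exponential chart descend to a neighbourhood of $x$ in $M/G$; this holds in the paper's setting and is routine to state.
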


\subsection{Results about discal, spherical and handlebody orbifolds}

\begin{definition}\label{Def of DisSphHan}
Let $B^n$ and $S^n$ be the n-ball and n-sphere respectively. An orbifold which is homeomorphic to some $B^n/G$ (respectively $S^n/G$, $V_g/G$) is called a discal n-orbifold (respectively spherical n-orbifold, handlebody orbifold).
\end{definition}

\begin{lemma}\label{Lem of 2Disorb}
An orientable discal 2-orbifold is homeomorphic to some $B^2_u/\langle r_n\rangle$. Here $B^2_u$ is the unit 2-ball centered at $(0,0,0)$ in the $xy$-plane.

\end{lemma}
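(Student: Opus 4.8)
The statement to prove is Lemma \ref{Lem of 2Disorb}: an orientable discal 2-orbifold is homeomorphic to some $B^2_u/\langle r_n\rangle$.

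\medskip

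The plan is to start from the definition: an orientable discal 2-orbifold $\mathcal{O}$ is by hypothesis homeomorphic to $B^2/G$ for some finite group $G$ acting faithfully and orientation-preservingly on the 2-ball $B^2$. First I would reduce to a linear action. By averaging a Riemannian metric on $B^2$ over $G$ (or by the Alexander trick combined with smoothing, working in the smooth category as the paper does), one may assume $G$ acts by isometries; then $G$ fixes the center of mass of the boundary circle, which after a homeomorphism we take to be the origin, and the action is the restriction of an orthogonal linear action on $\mathbb{R}^2$. Since $G$ is finite and orientation-preserving, $G \subset SO(2)$, hence $G$ is cyclic, generated by a rotation through $2\pi/n$ where $n = |G|$; that rotation is exactly $r_n$ restricted to the $xy$-plane. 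Therefore $B^2/G$ is homeomorphic to $B^2_u/\langle r_n\rangle$.

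\medskip

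The key steps, in order: (i) invoke finiteness of $G$ (every discal orbifold comes from a \emph{finite} group action, since the group of an orbifold point is finite by Lemma \ref{Lem of loc2orb}) and average to get a $G$-invariant metric, so the action is conjugate to an orthogonal one fixing a point; (ii) observe that a finite orientation-preserving subgroup of $O(2)$ lies in $SO(2)$ and is therefore cyclic, generated by rotation by $2\pi/n$; (iii) identify this generator with $r_n|_{xy\text{-plane}}$ and the invariant round ball with $B^2_u$, so that the quotients are homeomorphic as orbifolds, not merely as topological spaces — one checks the homeomorphism preserves local groups and lifts equivariantly, which is immediate since it is induced by a $G$-equivariant homeomorphism $B^2 \to B^2_u$.

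\medskip

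I expect the only real subtlety to be step (i): passing from an arbitrary faithful smooth finite group action on $B^2$ to a linear model. This is where one genuinely uses that we work with smooth (or at least tame) actions — the averaging-the-metric argument, followed by the observation that a finite group of isometries of a round 2-ball fixes the center. An alternative is to quote the well-known classification of finite group actions on the disc (they are all conjugate to orthogonal actions, by Brouwer/Kerékjártó in dimension 2, or by Smith-theoretic arguments). Everything after that is elementary group theory of $O(2)$. The statement is essentially a normalization lemma, so the ``proof'' is really an assembly of standard facts; the writeup should make explicit that ``homeomorphic'' is in the orbifold sense of \S\ref{Subsec of OT}.
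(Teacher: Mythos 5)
Your proposal is correct, and it essentially reconstructs the standard argument: the paper itself gives no proof of Lemma \ref{Lem of 2Disorb}, but simply cites Eilenberg's work on periodic maps of the plane, which is exactly the classical fact you invoke in step (i) (finite, in particular orientation-preserving, group actions on the disk are conjugate to orthogonal ones). The only soft spot in your write-up is the averaging argument itself --- an invariant metric need not be round, so ``center of mass of the boundary circle'' and the passage from a fixed point to a \emph{global} linear model require either a Cartan-type fixed point plus exponential-map argument or the Ker\'ek\-j\'art\'o/Eilenberg theorem you already name as the alternative; since you explicitly offer that citation, the argument is complete and matches the route the paper takes by reference.
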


\begin{lemma}\label{Lem of 3Disorb}
An orientable discal 3-orbifold is homeomorphic to some $B^3_u/H$, and an orientable spherical 2-orbifold is homeomorphic to some $S^2_u/H$. Here $B^3_u$ and $S^2_u$ are the unit 3-ball and 2-sphere centered at $(0,0,0)$ in $E^3$ respectively. $H$ is one of $\langle r_n\rangle$, $\langle r_n, r_x\rangle$, $I^+(P_T)$, $I^+(P_O)$, $I^+(P_I)$, $n\in \mathbb{Z}_+$.

\end{lemma}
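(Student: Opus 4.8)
The plan is to reduce both claims to the assertion that a finite smooth action of a group $G$ on $B^3$ (respectively on $S^2$) that preserves orientation is smoothly conjugate to an orthogonal action, and then to match the possibilities against the classification of finite subgroups of $SO(3)$. That classification says every finite subgroup of $SO(3)$ is conjugate to exactly one of $\langle r_n\rangle$ (cyclic), $\langle r_n,r_x\rangle$ (dihedral), $I^+(P_T)$ (tetrahedral), $I^+(P_O)$ (octahedral), $I^+(P_I)$ (icosahedral), $n\in\mathbb{Z}_+$; so once the action is linearized we obtain $B^3/G\cong B^3_u/H$ and $S^2/G\cong S^2_u/H$ with $H$ on this list, which is the assertion. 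Note that this is precisely the list of local groups in Lemma~\ref{Lem of loc3orb}, so the content of the lemma is that a discal $3$-orbifold looks globally like one of those local models.

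I would dispose of the $2$-dimensional statement first, since it is elementary. Let $\mathcal{O}=S^2/G$ be an orientable spherical $2$-orbifold. Averaging any Riemannian metric over $G$ gives a $G$-invariant metric on $S^2$; deforming it conformally to the round metric of curvature $1$ produces a $G$-invariant conformal factor, because the equation for that factor is $G$-equivariant and its solution is unique. Hence $G$ acts by isometries of the round sphere, that is, $G$ is conjugate into $O(3)$, and since it preserves orientation, into $SO(3)$. By the classification above, $S^2/G\cong S^2_u/H$ with $H$ on the stated list.

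For the $3$-dimensional statement let $\mathcal{O}=B^3/G$ with $G$ finite and orientation-preserving. Using a $G$-invariant collar of $\partial B^3$, form the double $S^3=B^3\cup_{\partial}B^3$ with the doubled (mirrored) smooth $G$-action; it preserves orientation and has an invariant $2$-sphere $\Sigma$, the gluing locus, splitting $S^3$ into two balls, one for each copy of $B^3$. The quotient $\widehat{\mathcal{O}}=S^3/G$ is a closed orientable $3$-orbifold with $\pi_1^{orb}(\widehat{\mathcal{O}})\cong G$ finite (as $S^3$ is simply connected), and it is very good (covered by the manifold $S^3$), hence contains no bad $2$-suborbifold. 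Now I would invoke the orbifold geometrization theorem together with the positive solution of the Smith conjecture (\cite{BMP,BP,Sm,Th}, with \cite{Li} covering the case of isolated singular points): a closed orientable $3$-orbifold with finite orbifold fundamental group and no bad $2$-suborbifold is spherical, i.e.\ homeomorphic to $S^3_u/\widetilde{H}$ for a linear $\widetilde{H}\le SO(4)$. Thus the $G$-action on $S^3$ is smoothly conjugate to $\widetilde{H}$, and the conjugation carries $\Sigma$ to a $\widetilde{H}$-invariant embedded $2$-sphere; by the equivariant sphere theorem (again part of the cited package) this sphere can be equivariantly isotoped to a great $2$-sphere. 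The subgroup of $SO(4)$ that preserves a great $2$-sphere along with the orientation of $S^3$ and each of its two sides acts on each side as a subgroup of $SO(3)$; restricting to the side corresponding to the original $B^3$ shows the $G$-action on $B^3$ is conjugate to a linear $H\le SO(3)$, whence $B^3/G\cong B^3_u/H$ with $H$ on the stated list.

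The routine parts are the averaging and uniformization argument in dimension $2$ and the classical classification of finite subgroups of $SO(3)$. The essential difficulty, and the only place a deep theorem enters, is the linearization of the finite action in dimension $3$: there one must appeal to the work of \cite{BMP,BP,Sm,Th} (and \cite{Li}), namely the orbifold geometrization theorem and the resolution of the Smith conjecture, which is used here as a black box.
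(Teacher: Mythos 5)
The paper never proves this lemma: in \S\ref{Sec of preliminaries} it is explicitly quoted from the literature, with \cite{Li} covering the case of isolated singular points and \cite{BMP,BP,Sm,Th} the case where the singular set is at least one-dimensional. So your overall strategy --- treat the linearization of finite smooth actions in dimension three as a black box from exactly those sources, and combine it with the classification of finite subgroups of $SO(3)$ --- is the same move the authors make, just written out; the genuinely new material you add is the reduction of the ball case to the sphere case by doubling, and the equivariant uniformization argument in dimension two. That is a reasonable way to present the lemma, but two of your individual steps are not correct as stated.

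First, in the two-dimensional argument the claim that the conformal factor taking the averaged metric to the round metric is unique is false on $S^2$: the round metric in a conformal class is unique only up to the (noncompact) M\"obius group, so $G$-invariance of the solution does not come for free. The standard repair is to note that after uniformizing, $G$ becomes a finite group of orientation-preserving conformal automorphisms of the Riemann sphere, i.e.\ a finite subgroup of $PSL(2,\mathbb{C})$, and every such subgroup is conjugate into $PSU(2)\cong SO(3)$ (for instance because it fixes a point of hyperbolic $3$-space, or by averaging a Hermitian form); this gives the same conclusion. Second, and more seriously, the ``equivariant sphere theorem'' does not do what you ask of it: it produces equivariant essential spheres in reducible $3$-manifolds, whereas you need to know that a smooth $\widetilde{H}$-invariant separating $2$-sphere in an orthogonal $S^3$ can be equivariantly isotoped to a great sphere. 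That is an equivariant Sch\"onflies/Alexander statement whose content is essentially the linearization of finite actions on $B^3$ that you set out to prove, so at this point your reduction becomes close to circular. The clean fix is either to invoke directly the theorem that every smooth finite group action on $B^3$ is conjugate to an orthogonal action (this is part of the same corpus the paper cites, and is how the lemma is meant to be read), or to geometrize a closed orbifold built from $B^3/G$ itself rather than straightening the sphere inside the already-linearized $S^3$. One small point in your favour that you could make explicit: the doubled action automatically has nonempty singular locus, since every element of $G$ has a fixed point in the compact contractible $B^3$ by the Lefschetz fixed point theorem, so the versions of the orbifold theorem in \cite{BMP,BP} (which assume nonempty ramification locus) apply and no appeal to the free case is needed.
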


\begin{lemma}\label{Lem of 3SphCyc}
Let $S^3/G$ be a spherical orbifold, here $G$ is a non-trivial cyclic group.

If $S^3/G$ is orientable, then the set of singular points of index $|G|$ is $\emptyset$ or a $S^1$.

If $S^3/G$ is not orientable, then the set of singular points of index $|G|$ is a $S^0$ or a $S^2$. Here $S^0\cong \{0,1\}$ with discrete topology.
\end{lemma}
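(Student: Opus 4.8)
The plan is to pass from the cyclic group $G$ acting on $S^3$ to an explicit description of the fixed-point behaviour of a generator, and then read off the singular set of index $|G|$ from the local models in Lemma \ref{Lem of loc3orb}. First I would recall that by the positive solution of the Smith conjecture together with the geometrization of spherical orbifolds (the references \cite{BMP,BP,Li,Sm,Th} cited in the remark), a finite group acting smoothly and effectively on $S^3$ is conjugate to a subgroup of $O(4)$; so without loss of generality $G=\langle A\rangle$ with $A\in O(4)$ of order $n=|G|$. The point of index $n$ in $S^3/G$ are exactly the images of points $x\in S^3$ whose stabiliser is all of $G$, i.e. $\mathrm{Fix}(A)\cap S^3$. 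So the whole lemma reduces to: classify $\mathrm{Fix}(A)\cap S^3$ for $A\in O(4)$ of finite order, and match orientability of $S^3/G$ with $\det A=\pm1$ (recall $A$ generates $G$, so $S^3/G$ is orientable iff $A\in SO(4)$).

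Next I would do the linear algebra. If $A\in SO(4)$ has finite order, put it in block-diagonal rotation form; its fixed subspace in $\mathbb R^4$ is $\ker(A-I)$, which is a linear subspace of even codimension (since the non-trivial rotation blocks come in $2$-dimensional pieces with no eigenvalue $1$), hence of dimension $0$, $2$ or $4$. Dimension $4$ is excluded as $A\neq I$, and if the fixed subspace of $A$ itself is $2$-dimensional but a proper power $A^k$ fixes more, that still only enlarges to dimension $4$, so the index-$n$ locus — where the full group acts trivially — comes from $\ker(A-I)$: it is a point (dim $0$, impossible inside $S^3$ as the intersection with the sphere would be empty, giving $\emptyset$) or a great circle $S^1$ (dim $2$). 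This yields the orientable case: $\emptyset$ or $S^1$. For the non-orientable case $A\notin SO(4)$, so $A$ has an odd number of $(-1)$-eigenvalues after block-diagonalisation; then $\ker(A-I)$ has odd codimension, so dimension $1$ or $3$ (dimension $4$ again excluded). Intersecting with $S^3$: a $1$-dimensional fixed subspace meets $S^3$ in two antipodal points, an $S^0$; a $3$-dimensional fixed subspace meets $S^3$ in an equatorial $S^2$. (Dimension $0$ would give $\emptyset$, but with $\det A=-1$ and finite order one checks the fixed subspace is never $\{0\}$: a $4\times4$ orthogonal matrix of determinant $-1$ always has $+1$ or the product structure forcing an eigenvalue $1$ — this small case check I would spell out.) This gives exactly $S^0$ or $S^2$, as claimed.

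Alternatively, and perhaps more in keeping with the orbifold framing of the paper, one can argue without the full $O(4)$-linearisation: let $x$ be a point of index $n=|G|$; by Lemma \ref{Lem of loc3orb} a neighbourhood of its image is $E^3/G$ with $G$ acting linearly and freely away from its fixed set, and since $G$ is cyclic the only possibilities among $\langle r_n\rangle,\langle r_n,r_x\rangle,I^+(P_T),I^+(P_O),I^+(P_I)$ are the cyclic ones. If $G$ acts preserving orientation its fixed set in the local model is the axis of $r_n$, a line, so the index-$n$ set is a $1$-manifold; it is closed in $S^3$ (fixed sets of homeomorphisms are closed) and a $1$-submanifold, hence a disjoint union of circles, and a connectedness/linking argument (or: the axis is a single geodesic through a linear model on $S^3=\partial B^4$) shows it is a single $S^1$ or empty. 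If $G$ reverses orientation, the local fixed set is either the fixed $S^0$ of the reflection-type generator (isolated points) or a mirror plane (a $2$-manifold), and the same closedness-plus-connectedness reasoning forces $S^0$ or $S^2$. The main obstacle is the input that the action is conjugate to an orthogonal (equivalently, geometric) one — this is exactly where the deep references enter — and, once that is granted, being careful in the orientation-reversing case that an index-$n$ point of ``$r_n$-type'' (rotation axis) cannot occur, since an orientation-reversing generator of a cyclic group acting on an orientable $3$-manifold cannot fix a curve along which it would have to act as a rotation while reversing global orientation; ruling this out cleanly via Lemma \ref{Lem of change sides}-type side-swapping considerations, or directly from the eigenvalue count, is the one spot that needs genuine care rather than bookkeeping.
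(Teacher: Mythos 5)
The paper itself contains no proof of this lemma: it is quoted as known, with the remark at the start of \S\ref{Sec of preliminaries} attributing it to \cite{Li} (isolated singular points) and to \cite{BMP,BP,Sm,Th} (singular set of dimension at least one). So the comparison is with the literature the paper cites rather than with an in-paper argument. Your main route is the standard one and is sound: once the cyclic action is conjugated into $O(4)$, the index-$|G|$ locus is $\mathrm{Fix}(A)\cap S^3=\ker(A-I)\cap S^3$ for a generator $A$, the quotient map is injective on this set (a deck transformation carrying one global fixed point to another must fix both), and the parity count is correct --- even codimension for $A\in SO(4)$ gives $\emptyset$ or a great circle, odd codimension ($1$ or $3$) for $\det A=-1$ gives $S^0$ or $S^2$, automatically non-empty.

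Two cautions. First, the blanket claim that every smooth effective finite cyclic action on $S^3$ is conjugate into $O(4)$ is stronger than what the cited references literally provide: \cite{Li} concerns only involutions with isolated fixed points, and \cite{BMP,BP} treat orientation-preserving actions with one-dimensional singular locus; the free and orientation-reversing cases in full generality need further geometrization-type input. For this lemma that can be routed around, and it is worth saying how: if the generator has empty fixed set the orientable conclusion ($\emptyset$) is trivial; an orientation-reversing generator always has a fixed point by the Lefschetz theorem ($L(h)=1-\deg h=2$); and the precise homeomorphism type ($S^1$, $S^0$, $S^2$) can alternatively be extracted from Smith theory (fixed sets of prime-order subgroups are mod $p$ homology spheres, and fixed sets of smooth finite-order maps are closed submanifolds of the parity-correct codimension) without full linearization. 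Second, your fallback ``local model plus closedness-plus-connectedness'' sketch does not stand on its own: the local models of Lemma \ref{Lem of loc3orb} only show the index-$|G|$ set is a closed $1$-manifold (respectively a closed $0$- or $2$-manifold), i.e.\ a priori several circles, or finitely many points, or some closed surface; the connectivity and the exact count are precisely what Smith theory or linearization must supply, as you half-acknowledge but do not fill in.
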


\begin{lemma}\label{Lem of Hanorb}
An orientable handlebody orbifold is a union of finitely many orientable discal 3-orbifolds $\{B^3_i/G_i\}^n_{i=1}, n\in \mathbb{Z}_+$, such that:

(a). In each $\partial B^3_i/G_i$ finitely many disjoint orientable discal 2-orbifolds are given;

(b). For $i\neq j$, if $B^3_i/G_i\cap B^3_j/G_j\neq\emptyset$, then $B^3_i/G_i\cap B^3_j/G_j$ is a union of finitely many orientable discal 2-orbifolds in (a);

(c). Each discal 2-orbifold in (a) is in exactly two elements in $\{B^3_i/G_i\}^n_{i=1}$.

(d). There is a choice of orientations of $\{B^3_i/G_i\}^n_{i=1}$, such that on any common discal 2-orbifold, the induced orientations are opposite.

Conversely, if a connected orientable 3-orbifold is a union of finitely many orientable discal 3-orbifolds $\{B^3_i/G_i\}^n_{i=1}$ satisfying (a--d), then it is an orientable handlebody orbifold. $\bigcup^n_{i=1}B^3_i/G_i$ is called a discal orbifold decomposition of it.
\end{lemma}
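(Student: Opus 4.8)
The statement to prove is Lemma~\ref{Lem of Hanorb}, the structure theorem for orientable handlebody orbifolds in terms of discal orbifold decompositions. Here is how I would approach it.

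\medskip

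\noindent\textbf{Plan of proof.} The strategy is to realize the decomposition concretely upstairs in the handlebody $V_g$ and then push it down to the quotient, and conversely to reconstruct a handlebody action from the combinatorial data. For the forward direction, suppose $\mathcal{O}=V_g/G$ is an orientable handlebody orbifold, so $G$ acts faithfully, smoothly and orientation-preservingly on $V_g$. First I would choose a $G$-invariant spine (a $G$-invariant graph $\Gamma\subset V_g$ with $V_g$ a regular neighbourhood of $\Gamma$); such an invariant spine exists by an averaging/equivariant argument (or, in the orbifold language, by taking a handle decomposition of $\mathcal{O}$ compatible with the singular set). Thicken the vertices of $\Gamma$ to $G$-invariant balls and the edges to $G$-invariant $1$-handles $D^2\times I$; the balls give a collection of $3$-balls $\{B^3_i\}$ permuted by $G$, whose quotients $B^3_i/G_i$ (with $G_i$ the stabilizer of $B^3_i$) are orientable discal $3$-orbifolds by Lemma~\ref{Lem of 3Disorb}. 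The $1$-handles attach along pairs of disks in the $\partial B^3_i$; after quotienting, each $1$-handle orbit contributes a discal $2$-orbifold (orientable, hence of the form $B^2_u/\langle r_n\rangle$ by Lemma~\ref{Lem of 2Disorb}) glued into two of the $B^3_i/G_i$. Conditions (a)--(c) are then immediate from the handle picture; condition (d) follows because $G$ preserves the orientation of $V_g$, so the orientations of the $B^3_i$ can be chosen $G$-equivariantly and are then automatically opposite on shared disks, passing to the quotient.

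\medskip

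\noindent\textbf{Converse direction.} Given a connected orientable $3$-orbifold $\mathcal{O}$ presented as $\bigcup_{i=1}^n B^3_i/G_i$ satisfying (a)--(d), I would show it is $V_g/G$ for some $G$-action on a handlebody. The idea is to build the orbifold universal cover (or a suitable manifold cover) by ``unfolding'' the pieces. Each $B^3_i/G_i$ is covered by $B^3_i$; the gluing $2$-orbifolds $B^2_u/\langle r_n\rangle$ are covered by $B^2_u$. Using the orbifold Van~Kampen theorem (mentioned in \S\ref{Subsec of OT}) one assembles the pieces: the resulting space, built from finitely many $3$-balls glued along disks in their boundaries in a way that is locally the unfolding of a $1$-handle attachment, is a handlebody $V_g$, and the deck group $G=\pi_1^{orb}(\mathcal{O})/\pi_1(V_g)$ (or rather $\pi_1^{orb}$ acting on the universal cover, which is $V_g$ when the cover has a handle structure) acts on it with quotient $\mathcal{O}$. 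Condition (d) is exactly what guarantees the assembled cover is orientable, so $G$ preserves orientation and $\mathcal{O}$ is an orientable handlebody orbifold; connectedness of $\mathcal{O}$ gives connectedness of $V_g$.

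\medskip

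\noindent\textbf{Main obstacle.} The delicate point is the \emph{existence of a $G$-invariant spine} (equivalently, a handle decomposition of $\mathcal{O}$ with $0$- and $1$-handles only, compatible with the singular locus) in the forward direction, and the dual issue in the converse — verifying that the space assembled from the $B^3_i$ along the lifted gluing disks is genuinely a handlebody rather than some other compression body or more complicated $3$-manifold. This is where one must use that the gluing data come from discal $2$-orbifolds realized as meridian-type disks and that the whole configuration is simply a thickened graph; controlling the topology of the assembled manifold (its genus $g$ is determined by the Euler-characteristic count, cf. Lemma~\ref{Lem of Euler char equ}) is the technical heart. Once the invariant handle structure is in hand, everything else is bookkeeping with Lemmas~\ref{Lem of 2Disorb} and~\ref{Lem of 3Disorb} and the orientation condition~(d).
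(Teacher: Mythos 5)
The paper itself offers no proof of this lemma: it is quoted from \cite{MMZ,Zi1}, and your outline does follow the route taken there (an equivariant $0$/$1$-handle structure upstairs for the forward direction, reconstruction of a cover for the converse). But as written it has genuine gaps at exactly the two points you flag. In the forward direction, the existence of a $G$-invariant spine, equivalently an invariant system of meridian disks cutting $V_g$ into balls, is not obtainable by any ``averaging/equivariant argument''; it is the technical heart of the theorem and in \cite{MMZ} rests on the equivariant Dehn lemma/loop theorem of Meeks--Yau (after which Lemma~\ref{Lem of 3Disorb} identifies the ball stabilizer quotients as orientable discal $3$-orbifolds and Lemma~\ref{Lem of 2Disorb} handles the gluing disks). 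Asserting the invariant spine leaves the forward implication unproved.

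In the converse, the universal-cover argument does not work as stated. For a union of discal $3$-orbifolds glued along discal $2$-orbifolds, the orbifold fundamental group is the fundamental group of a finite graph of finite groups and is typically infinite (compare classes (C) and (D) in Lemma~\ref{Lem of hand classify}, with groups such as $\mathbb{Z}_n\ast\mathbb{Z}$), so the orbifold universal cover is a noncompact infinite union of balls, not $V_g$; and writing the deck group as $\pi_1(\mathcal{O})/\pi_1(V_g)$ presupposes precisely what has to be established, namely that a free group of finite rank sits inside $\pi_1(\mathcal{O})$ as a normal subgroup of finite index. The correct argument picks a torsion-free normal subgroup of finite index (it exists because fundamental groups of finite graphs of finite groups are virtually free, e.g.\ by Bass--Serre theory together with residual finiteness); the corresponding finite orbifold cover is then an honest compact $3$-manifold assembled from balls glued along disks as in (a)--(d), hence an orientable handlebody (orientability from (d), the genus from the Euler-characteristic count as in Lemma~\ref{Lem of Euler char equ}), and the finite deck group acts on it with quotient $\mathcal{O}$. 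With these two inputs supplied, the remaining bookkeeping in your sketch is correct.
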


\begin{definition}\label{Def of graph of groups}
A finite graph of finite groups $\mathcal{G}$ is a labelled finite graph. Each vertex and edge is labelled by a finite group. If $G_v$ is a vertex adjacent to an edge $G_e$, then there is an injective homomorphism $\phi_{ev}:G_e\rightarrow G_v$. If we forget all labels, then we get its underlying graph $|\mathcal{G}|$.
\end{definition}

\begin{definition}\label{Def of induced GofGs}
Let $\mathcal{H}$ be an orientable handlebody orbifold. $\mathcal{H}=\bigcup^n_{i=1}B^3_i/G_i$ is a discal orbifold decomposition as in Lemma \ref{Lem of Hanorb}. Then $\mathcal{H}$ has an induced finite graph of finite groups (induced by the discal orbifold decomposition) defined as following: it contains $\{G_i\}^n_{i=1}$ as vertices; there is an edge $G$ between $G_i$ and $G_j$ if $B^3_i/G_i$ and $B^3_j/G_j$ share a common discal 2-orbifold $B^2/G$.
\end{definition}

\begin{definition}\label{Def of Euler char}
Let $\mathcal{G}$ be a finite graph of finite groups. $V$ and $E$ are its vertex set and edge set respectively. Define its Euler characteristic:
$$\chi(\mathcal{G})=\sum_{G_v\in V}\frac{1}{|G_v|}-\sum_{G_e\in E}\frac{1}{|G_e|}.$$

Let $\mathcal{H}$ be an orientable handlebody orbifold, and $\mathcal{H}\cong V_g/G$. Define its Euler characteristic:
$$\chi(\mathcal{H})= \frac{1-g}{|G|}.$$
\end{definition}

\begin{definition}\label{Def of finite injsurj}
A homomorphism $\phi: H\rightarrow G$ between groups is a finitely injective surjection if it is surjective and on any finite subgroup of $H$ it is injective.
\end{definition}

\begin{lemma}\label{Lem of Euler char equ}
Let $\mathcal{H}$ be an orientable handlebody orbifold. Let $\mathcal{G}$ be an induced finite graph of finite groups of $\mathcal{H}$. Then $\chi(\mathcal{H})=\chi(\mathcal{G})$.
\end{lemma}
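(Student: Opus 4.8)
The plan is to establish $\chi(\mathcal{H})=\chi(\mathcal{G})$ by an inclusion-exclusion count on the discal orbifold decomposition $\mathcal{H}=\bigcup_{i=1}^n B^3_i/G_i$, using the fact that for a finite group $G$ acting on a contractible (or just $\chi=1$) space $X$ one has, via averaging over $G$, an "orbifold Euler characteristic" equal to $\chi(X)/|G|$; more concretely, I will avoid delicate orbifold-cohomology subtleties by working with a single closed surface (or a $G$-invariant triangulation) and passing to quotients. First I would recall from Lemma \ref{Lem of Hanorb} that $\mathcal{H}\cong V_g/G$ with each $B^3_i/G_i$ a discal 3-orbifold and each pairwise intersection $B^3_i/G_i\cap B^3_j/G_j$ a union of discal 2-orbifolds $B^2/G_e$, one for each edge $G_e$ of the induced graph $\mathcal{G}$ (Definition \ref{Def of induced GofGs}), with triple and higher intersections empty since the underlying graph is one-dimensional; this is exactly the combinatorial pattern of gluing balls along disks to build a handlebody.

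The key computational step is to assign an Euler characteristic to a discal orbifold $B^k/H$ and check it equals $1/|H|$. For $k=3$ this follows because $B^3$ is contractible and $H$ is finite: choosing an $H$-invariant CW (or simplicial) structure on $B^3$, the number of cells of $B^3/H$ counted with the weights $1/|H_c|$ (where $H_c$ is the stabilizer of a cell $c$) telescopes to $\sum_{\text{orbits } c} 1/|H_c| = \frac{1}{|H|}\sum_{\text{cells of }B^3} 1 = \chi(B^3)/|H| = 1/|H|$ — this is just the standard orbit-counting identity $|H/H_c|=|H|/|H_c|$. The same argument gives $\chi(B^2/H_e)=1/|H_e|$ for the discal 2-orbifolds along which we glue. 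Then I would run ordinary inclusion–exclusion:
\begin{equation*}
\chi(\mathcal{H})=\sum_{i=1}^n \chi\bigl(B^3_i/G_i\bigr)-\sum_{\text{edges }G_e}\chi\bigl(B^2/G_e\bigr)=\sum_{G_v\in V}\frac{1}{|G_v|}-\sum_{G_e\in E}\frac{1}{|G_e|}=\chi(\mathcal{G}),
\end{equation*}
where the first equality needs the additivity of this weighted Euler characteristic under gluing two orbifolds along a common sub-orbifold, which again reduces to ordinary additivity of $\chi$ on the underlying $G$-complexes upstairs (choose the invariant cell structures compatibly on the overlaps). Finally, I would note that $\chi(\mathcal{H})$ so computed agrees with the definition $\chi(\mathcal{H})=(1-g)/|G|$ from Definition \ref{Def of Euler char}: lift a cell structure on the orbifold $\mathcal{H}=V_g/G$ to $V_g$, observe the cells pull back with total count $|G|\cdot(\text{weighted count downstairs})$ at regular cells and correctly weighted count at singular cells, so the weighted Euler characteristic downstairs is $\chi(V_g)/|G|=(1-g)/|G|$.

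I expect the main obstacle to be making the "weighted Euler characteristic is additive under orbifold gluing and invariant under choice of invariant cell structure" rigorous without re-developing orbifold (co)homology — i.e. ensuring that the CW structures on the pieces $B^3_i$ can be chosen $G_i$-equivariantly and simultaneously compatibly along every shared discal 2-orbifold, so that the inclusion–exclusion on $|\mathcal{H}|$ with local-group weights is literally an identity of finite sums. One clean way around this is to argue entirely upstairs: fix a $G$-invariant triangulation of $V_g$ adapted to the decomposition, push it down, and then both sides of $\chi(\mathcal{H})=\chi(\mathcal{G})$ become manifestly equal finite rational sums; the graph-of-groups side appears because the images of the (closed) balls and gluing disks are precisely the vertices and edges of $\mathcal{G}$. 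I would also double-check the orientation hypothesis (d) of Lemma \ref{Lem of Hanorb} is not actually needed for this Euler-characteristic identity — it is used to know $\mathcal{H}$ is a handlebody orbifold, not for the count — and cite \cite{MMZ,Zi1} for the statement as packaged there.
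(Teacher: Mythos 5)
Your argument is correct; note, though, that the paper does not prove this lemma at all --- it is quoted as a known fact with a pointer to \cite{MMZ,Zi1} (see the remark at the start of \S3), so your proposal supplies a proof where the paper only cites. What you write is essentially the standard argument from those references: define the weighted (orbifold) Euler characteristic by counting cells of a $G$-invariant cell structure with weights $1/|G_x|$, observe it is multiplicative under the finite orbifold cover $V_g\to V_g/G$ (orbit counting, $|H/H_c|=|H|/|H_c|$), so it equals $(1-g)/|G|=\chi(\mathcal{H})$ on the one hand and, by additivity over the discal decomposition with $\chi^{orb}(B^3/H)=\chi^{orb}(B^2/H)=1/|H|$ and empty triple intersections (which indeed follow from the disjointness of the discal 2-orbifolds in each boundary together with condition (c) of Lemma \ref{Lem of Hanorb}), equals $\sum_v 1/|G_v|-\sum_e 1/|G_e|=\chi(\mathcal{G})$ on the other. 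The technical point you flag is handled by Illman's equivariant triangulation theorem for smooth finite actions, choosing the invariant triangulation so that the preimages of the decomposition pieces and of the gluing 2-orbifolds are subcomplexes; and you are right that hypothesis (d) of Lemma \ref{Lem of Hanorb} plays no role in the count. One small bookkeeping caveat: the induced graphs in Lemma \ref{Lem of hand classify} can have loop edges (classes (C), (D)), i.e.\ a discal 3-orbifold glued to itself along two boundary 2-orbifolds; such an edge does not show up as a pairwise intersection of two distinct subsets, so it is cleaner to phrase the count as ``disjoint union of the pieces, then identify pairs of boundary 2-orbifolds, each identification subtracting $1/|G_e|$'' rather than as literal inclusion--exclusion on subsets of $\mathcal{H}$; with that phrasing your computation covers all cases and the identity follows.
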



\begin{lemma}\label{Lem of handcover}
Let $\mathcal{H}$ be an orientable handlebody orbifold, and $G$ be a finite group. Then $G$ can orientation-preservingly act on $V_g$ such that $V_g/G\cong \mathcal{H}$ if and only if there is a finitely injective surjection $\phi:\pi_1(\mathcal{H})\rightarrow G$ and $1-g=\chi(\mathcal{H})|G|$.
\end{lemma}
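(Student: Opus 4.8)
The plan is to translate the statement into orbifold covering space theory. Handlebody orbifolds are very good --- by definition $\mathcal{H}\cong V_g/G$ for some finite group $G$, so $\mathcal{H}$ is finitely covered by a manifold --- hence connected orbifold coverings of $\mathcal{H}$ correspond bijectively to conjugacy classes of subgroups of $\pi_1(\mathcal{H})$, with regular coverings corresponding to normal subgroups.

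For necessity, suppose $G$ acts orientation-preservingly on $V_g$ with $V_g/G\cong\mathcal{H}$. Then the quotient map $q\co V_g\to V_g/G\cong\mathcal{H}$ is a regular orbifold covering with deck group $G$, corresponding to a surjection $\phi\co\pi_1(\mathcal{H})\to G$ with kernel $q_*\pi_1(V_g)$. Since $\pi_1(V_g)$ is free, hence torsion-free, for every finite subgroup $F\le\pi_1(\mathcal{H})$ the group $F\cap\ker\phi$ is finite and torsion-free, thus trivial, so $\phi|_F$ is injective; that is, $\phi$ is a finitely injective surjection. Applying the definition of the Euler characteristic of a handlebody orbifold to the presentation $\mathcal{H}\cong V_g/G$ (it is presentation-independent by Lemma~\ref{Lem of Euler char equ}) gives $1-g=\chi(\mathcal{H})|G|$.

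For sufficiency, let $\phi\co\pi_1(\mathcal{H})\to G$ be a finitely injective surjection and set $K=\ker\phi$, a normal subgroup of index $|G|$. If $K$ contained a nontrivial element of finite order, it would contain a nontrivial finite cyclic subgroup on which $\phi$ is both injective and trivial, which is absurd; so $K$ is torsion-free. Let $p\co\widetilde{\mathcal{H}}\to\mathcal{H}$ be the connected orbifold covering corresponding to $K$; it is regular of degree $|G|$ with deck group $\pi_1(\mathcal{H})/K\cong G$, which therefore acts freely on $\widetilde{\mathcal{H}}$, and orientation-preservingly since $\mathcal{H}$ is orientable. Up to isomorphism the local groups of $\widetilde{\mathcal{H}}$ are the groups $K\cap gG_xg^{-1}$ for singular points $x\in\mathcal{H}$ and $g\in\pi_1(\mathcal{H})$, each a finite subgroup of the torsion-free group $K$, hence trivial; so $\widetilde{\mathcal{H}}$ is a compact connected orientable $3$-manifold. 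To identify it, take a discal orbifold decomposition $\mathcal{H}=\bigcup_{i=1}^{n}B^3_i/G_i$ as in Lemma~\ref{Lem of Hanorb} and pull it back along $p$: each $B^3_i/G_i$ lifts to finitely many discal $3$-orbifolds that are now manifolds, hence $3$-balls by Lemma~\ref{Lem of 3Disorb}, and the discal $2$-orbifolds of condition~(a) lift to disks. The incidence relations and a coherent choice of orientations are inherited from those on $\mathcal{H}$, so properties (a)--(d) of Lemma~\ref{Lem of Hanorb} hold for the lifted decomposition; by the converse half of that lemma $\widetilde{\mathcal{H}}$ is an orientable handlebody, say $\widetilde{\mathcal{H}}\cong V_h$. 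Since $V_h/G\cong\widetilde{\mathcal{H}}/G\cong\mathcal{H}$, the definition of $\chi$ applied to this presentation gives $1-h=\chi(\mathcal{H})|G|=1-g$, so $h=g$; thus $G$ acts orientation-preservingly on $V_g$ with quotient $\mathcal{H}$.

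The main obstacle is the lifting step: checking that pulling a discal orbifold decomposition back through the manifold covering $p$ again satisfies conditions (a)--(d) of Lemma~\ref{Lem of Hanorb}, so that the cover is forced to be an actual handlebody --- and then, via the Euler characteristic hypothesis, one of the correct genus --- rather than some other compact $3$-manifold. The remaining ingredients are routine bookkeeping with fundamental groups, local groups of coverings, and the definition of the orbifold Euler characteristic.
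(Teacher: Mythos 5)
The paper offers no proof of Lemma~\ref{Lem of handcover} to compare against: it is quoted as a known result from \cite{MMZ,Zi1}. Your argument is essentially the standard one from those references, and it is correct in outline: necessity follows because $\ker\phi\cong\pi_1(V_g)$ is free, hence torsion-free, so $\phi$ is injective on finite subgroups; sufficiency follows by passing to the regular orbifold covering $\widetilde{\mathcal{H}}\to\mathcal{H}$ determined by the torsion-free normal subgroup $K=\ker\phi$ (legitimate because a handlebody orbifold is very good, so the covering correspondence recalled in \S\ref{Subsec of OT} applies), noting that torsion-freeness of $K$ forces all local groups of $\widetilde{\mathcal{H}}$ to be trivial, pulling back a discal decomposition as in Lemma~\ref{Lem of Hanorb}, and pinning down the genus with Lemma~\ref{Lem of Euler char equ} and the hypothesis $1-g=\chi(\mathcal{H})|G|$.

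Two small repairs are needed, neither of which affects the structure of the argument. First, your claim that the deck group $\pi_1(\mathcal{H})/K\cong G$ ``acts freely on $\widetilde{\mathcal{H}}$'' is false in general: deck transformations of an orbifold covering have fixed points precisely over the singular points of $\mathcal{H}$, and if the action were free then $\mathcal{H}=\widetilde{\mathcal{H}}/G$ would have no singular points at all. Fortunately you never use freeness; what your argument actually needs, and does establish, is that $\widetilde{\mathcal{H}}$ has trivial local groups, so simply delete that clause (the action is still faithful and orientation-preserving, which is all the lemma asks for). Second, the converse half of Lemma~\ref{Lem of Hanorb} only returns that $\widetilde{\mathcal{H}}$ is a handlebody \emph{orbifold}; to conclude it is an honest handlebody you should add one line, e.g.\ that the lifted decomposition exhibits $\widetilde{\mathcal{H}}$ as finitely many $3$-balls glued along disjoint disks in their boundaries, hence as a regular neighbourhood of the dual graph of the decomposition, which is a handlebody (equivalently: a handlebody orbifold with trivial local groups is a free finite quotient of a handlebody, and such a quotient is a handlebody). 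With these two adjustments the proof is complete and matches the argument in the cited sources.
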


\begin{lemma}\label{Lem of extend to Hand}
Let $G$ be an abelian group acting on $\Sigma_g$. Suppose $\Sigma_g/G$ is orientable, having singular points $p_1,\cdots, p_{\alpha}$ and $q_1, \cdots, q_{\beta}$. Index of $p_j (1\leq j\leq \alpha)$ is $n_j$ and $n_j>2$; index of $q_k (1\leq k\leq \beta)$ is $2$. Then for some $\gamma\in \mathbb{Z}_+\cup\{0\}$ we have
\begin{align*}
\pi_1(\Sigma/G)=&\langle a_1,b_1,\cdots,a_{\gamma},b_{\gamma}, x_1,\cdots,x_{\alpha},y_1,\cdots,y_{\beta}\mid \\ &\prod^{\gamma}_{i=1}[a_i,b_i] \prod^{\alpha}_{j=1}x_j\prod^{\beta}_{k=1}y_k=1, x_j^{n_j}=y_k^2=1,1\leq j\leq \alpha,1\leq k\leq \beta\rangle.
\end{align*}
The action corresponds to a finitely injective surjection $\phi:\pi_1(\Sigma_g)\rightarrow G$. Then the following (a), (b) and (c) are equivalent:

(a). The $G$-action on $\Sigma_g$ extends to a handlebody $V_g$, with $\partial V_g=\Sigma_g$.

(b). The $G$-action on $\Sigma_g$ extends to a compact 3-manifold $M$, with $\partial M=\Sigma_g$.

(c). The generators $x_1,\cdots,x_\alpha$ corresponding to $p_1,\cdots, p_{\alpha}$ can be partitioned into pairs $x_s$, $x_t$ such that $\phi(x_s)=\phi(x_t^{-1})$.
\end{lemma}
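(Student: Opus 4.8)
\emph{Proof strategy.} The implication (a)$\Rightarrow$(b) is immediate since a handlebody is a compact $3$-manifold, so the content lies in (b)$\Rightarrow$(c) and (c)$\Rightarrow$(a); I expect the latter to be the main work.

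For (b)$\Rightarrow$(c): assume the $G$-action extends to $M$ with $\partial M=\Sigma_g$ and pass to the quotient $3$-orbifold $M/G$, whose boundary is $\Sigma_g/G$. The orbifold covering $M\to M/G$ is regular with deck group $G$ and restricts over the boundary to $\Sigma_g\to\Sigma_g/G$, so $\phi$ equals the composition $\pi_1(\Sigma_g/G)\to\pi_1^{orb}(M/G)\to G$. Since $G$ is abelian, every local group of $M/G$ is an abelian subgroup of $G$; by Lemma~\ref{Lem of loc3orb} the only possibilities are $\langle r_n\rangle$ and $\langle r_2,r_x\rangle\cong\mathbb Z_2\oplus\mathbb Z_2$, because the dihedral groups $\langle r_n,r_x\rangle$ with $n\ge 3$ and the polyhedral groups are non-abelian. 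Hence the singular set of $M/G$ is a graph in which every vertex of valence $\neq 2$ has local group $\mathbb Z_2\oplus\mathbb Z_2$ and meets only index-$2$ edges, so every edge of index $n_j>2$ is an embedded arc or circle of constant index; the arcs have both endpoints on $\partial(M/G)=\Sigma_g/G$, and these endpoints are exactly the cone points $p_1,\dots,p_\alpha$ of index $>2$. This partitions $\{p_1,\dots,p_\alpha\}$ into pairs $\{p_s,p_t\}$ joined by a singular arc $\ell$ with $n_s=n_t$. A regular neighbourhood $N(\ell)\cong(B^2_u/\langle r_{n_s}\rangle)\times[0,1]$ is a discal $3$-orbifold with $\pi_1^{orb}=\mathbb Z_{n_s}$ generated by a meridian $\mu$ of $\ell$; pushing $\mu$ toward the two ends, and noting that the two boundary circles of the connecting annulus $(\partial B^2_u/\langle r_{n_s}\rangle)\times[0,1]$ carry opposite orientations, shows that $x_s$ and $x_t^{-1}$ are conjugate in $\pi_1^{orb}(M/G)$. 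Applying $\phi$ and using that $G$ is abelian gives $\phi(x_s)=\phi(x_t)^{-1}$, which is (c).

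For (c)$\Rightarrow$(a): the plan is to build, directly from the combinatorial data, an orientable handlebody orbifold $\mathcal H$ with $\partial\mathcal H\cong\Sigma_g/G$ through which $\phi$ factors, and then invoke Lemma~\ref{Lem of handcover}. I would assemble $\mathcal H$ from a discal orbifold decomposition (Lemma~\ref{Lem of Hanorb}): compress the boundary along the generators $b_1,\dots,b_\gamma$; for each pair $\{p_s,p_t\}$ furnished by (c), glue in a ``$\mathbb Z_{n_s}$-arc handle'' $(B^2_u/\langle r_{n_s}\rangle)\times[0,1]$ whose two boundary cone points are $p_s$ and $p_t$, which in $\pi_1$ imposes exactly $x_s=x_t^{-1}$; and for the index-$2$ cone points $q_1,\dots,q_\beta$ — for which the defining relation of $\pi_1(\Sigma_g/G)$ together with (c) forces $\prod_k\phi(y_k)=1$ in $G$ — cap them off by discal $3$-orbifolds $B^3_u/K$ with $K$ generated by order-$2$ elements (e.g.\ $B^3_u/\langle r_2\rangle$ caps a pair of cone points with equal images, and $B^3_u/\langle r_2,r_x\rangle$ caps a triple with images $\{a,b,ab\}$), choosing the grouping so that the boundary meridians realise the $\phi(y_k)$. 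Then one checks: (i) $\partial\mathcal H\cong\Sigma_g/G$ and $\phi$ factors as $\pi_1(\Sigma_g/G)=\pi_1(\partial\mathcal H)\to\pi_1(\mathcal H)\xrightarrow{\psi}G$; (ii) $\psi$ is surjective (clear) and finitely injective, since the finite subgroups of $\pi_1(\mathcal H)$ are conjugates of the vertex groups $\langle r_{n_j}\rangle,\langle r_2\rangle,\langle r_2,r_x\rangle$, on which $\phi$ is injective because $\phi(x_j)$ and $\phi(y_k)$ have orders $n_j$ and $2$; (iii) the numerical hypothesis $1-g=\chi(\mathcal H)|G|$ of Lemma~\ref{Lem of handcover} follows automatically, being the handlebody-orbifold version of $\chi(\partial V)=2\chi(V)$ applied to $\partial\mathcal H\cong\Sigma_g/G$ (combine Lemma~\ref{Lem of Euler char equ} with additivity of orbifold Euler characteristic over the decomposition). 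Lemma~\ref{Lem of handcover} then yields an orientation-preserving $G$-action on a handlebody $V_g$ with $V_g/G\cong\mathcal H$; restricting to $\partial V_g$ gives a $G$-action on $\Sigma_g$ with quotient $\partial\mathcal H=\Sigma_g/G$ realising $\phi$, hence equivalent to the given one, so the given action extends over $V_g$, proving (a).

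The step I expect to be the main obstacle is (c)$\Rightarrow$(a): turning the gluing of the $\mathbb Z_{n_s}$-arc handles and index-$2$ caps into an honest discal orbifold decomposition satisfying (a)--(d) of Lemma~\ref{Lem of Hanorb}, and in particular organising the order-$2$ images $\phi(y_1),\dots,\phi(y_\beta)$ — which need not literally pair up when $G$ has $2$-rank $\ge 2$ — into blocks each capped by a single discal $3$-orbifold without disturbing the genus $\gamma$ of $\partial\mathcal H$. The orientation bookkeeping in the spirit of Lemma~\ref{Lem of change sides} and the verification that the reconstructed boundary action is the original action (not merely an abstractly isomorphic quotient) are the remaining routine checks.
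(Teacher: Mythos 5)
First, note that the paper does not prove this lemma at all: it is quoted from Reni--Zimmermann \cite{RZ2} (see the remark at the start of \S\ref{Sec of preliminaries}), so there is no in-paper argument to compare yours with; I can only assess your proposal on its own. Your (a)$\Rightarrow$(b) is trivial and your (b)$\Rightarrow$(c) is the standard fixed-arc/meridian argument and is essentially right, but it silently assumes that $M$ (hence $M/G$) is orientable: Lemma \ref{Lem of loc3orb} is a statement about orientable $3$-orbifolds, and the key step ``the two boundary circles of the connecting annulus carry opposite orientations'' is exactly the point that can fail in a nonorientable $M$ --- if the singular arc runs along an orientation-reversing path of $M$, the two boundary meridians are conjugate to the \emph{same} power of the core meridian, giving $\phi(x_s)=\phi(x_t)$ rather than $\phi(x_t)^{-1}$. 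Since statement (b) as printed allows an arbitrary compact $M$, you must either restrict to orientable $M$ (which is how the lemma is actually used in Propositions \ref{Pro of cepm} and \ref{Pro of ag}, where $M$ is one side of $S^3$) or supply a reduction to that case; as written this is a gap.

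The more serious gaps are in (c)$\Rightarrow$(a), where you yourself flag the danger but do not resolve it, and where your recipe as stated does not work. (1) ``Compress the boundary along $b_1,\dots,b_\gamma$'' forces the holonomy of your $\mathcal H$ to kill each $b_i$, whereas in general $\phi(b_i)\neq 1$; then $\phi$ does not factor through $\pi_1(\mathcal H)$, the map you feed to Lemma \ref{Lem of handcover} need not even be surjective, and the boundary of the resulting $G$-cover carries an action whose holonomy differs from $\phi$. Your closing assertion that an action with quotient $\Sigma_g/G$ is ``hence equivalent to the given one'' is false in general (actions are classified by the epimorphism up to automorphisms, not by the quotient orbifold alone), so the extension you produce need not extend the \emph{given} action. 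The repair is to fill the $b_i$-handles not by honest $2$-handles but by orbifold solid tori whose singular cores have index equal to the order of $\phi(b_i)$, so that $b_i$ maps to $\phi(b_i)$, and then to check the boundary holonomy is literally $\phi$. (2) Your treatment of the index-$2$ points fails for exactly the reason you suspected: if the multiset $\{\phi(y_k)\}$ is, say, $\{a,b,c,abc\}$ with $a,b,c$ independent involutions, its product is $1$ but it admits no partition into equal pairs or triples of the form $\{a,b,ab\}$, so no collection of discal caps $B^3_u/\langle r_2\rangle$, $B^3_u/\langle r_2,r_x\rangle$ exists; one needs non-discal handlebody-orbifold pieces (for instance a ball whose singular set is an $H$-shaped index-$2$ graph with two $\mathbb Z_2\oplus\mathbb Z_2$ vertices, which has Euler characteristic $0$ and is a genus-one handlebody orbifold), or equivalently an inductive compression along simple closed curves in $\ker\phi$ encircling larger groups of cone points. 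Until these two points are handled, (c)$\Rightarrow$(a) is a plan rather than a proof.
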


\section{Maximum orders of cyclic and abelian actions on handlebodies}\label{Sec of maxOrdHan}

In this section we will determine the maximum orders of cyclic and abelian group actions on orientable handlebodies. Firstly we need some lemmas.

\subsection{Some useful lemmas}

\begin{lemma}\label{Lem of hand classify}
Let $V_g/G$ be an orientable handlebody orbifold. Here $G$ is cyclic and $|G|>g-1$. Then $V_g/G$ has an induced finite graph of finite groups as one of the following, here $l,m,n>1$.

\xymatrix{(A).&\mathbb{Z}_l \ar@{-}[r] & \mathbb{Z}_m \ar@{-}[r] & \mathbb{Z}_n&(1/l+1/m+1/n>1)}
\xymatrix{(B).&\mathbb{Z}_m \ar@{-}[r] & \mathbb{Z}_n& (0<1/m+1/n<1)}
\xymatrix{(C).&\mathbb{Z}_n \ar@(ur,dr)@{-}[]}
\xymatrix{(D).&\mathbb{Z}_n \ar@{-}[r] & \mathbb{Z}_m \ar@(ur,dr)@{-}[]^{\mathbb{Z}_m}}
\end{lemma}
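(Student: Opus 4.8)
The plan is to combine the handlebody-orbifold structure (Lemma 3.14), the Euler characteristic identity (Lemma 3.18), and the constraint that $G$ is cyclic. First I would invoke Lemma 3.14 to write $V_g/G$ as a discal orbifold decomposition and pass to the associated induced finite graph of finite groups $\mathcal{G}$ with underlying graph $|\mathcal{G}|$. Since $G$ is cyclic, every vertex and edge group, being a subgroup (a quotient of a stabilizer embedded in $G$ by Lemma 3.9/3.12), is cyclic; so all labels are $\mathbb{Z}_k$ for various $k$. By Lemma 3.18, $\chi(\mathcal{G}) = \chi(V_g/G) = (1-g)/|G|$, and the hypothesis $|G| > g-1$ forces $\chi(\mathcal{G}) > -1$, in fact $\chi(\mathcal{G}) = (1-g)/|G| > (1-g)/(g-1)\cdot(\text{something})$; more precisely $1-g \ge -(|G|-1)$ is not quite it — rather $\chi(\mathcal{G}) = (1-g)/|G|$ and $g-1 < |G|$ give $\chi(\mathcal{G}) > -1$. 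This is the key numerical lever.

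Next I would bound the first Betti number (rank of the free part) and the number of vertices/edges of $|\mathcal{G}|$. Write $b_1(|\mathcal{G}|) = E - V + 1$ (for a connected graph). Since each $|G_v| \ge 1$ and $|G_e| \ge 2$ whenever an edge is genuinely singular — and one must check edges can be assumed to carry nontrivial groups after collapsing trivial-group edges, or handle trivial edge groups directly — we get
$$\chi(\mathcal{G}) = \sum_{v}\frac{1}{|G_v|} - \sum_e \frac{1}{|G_e|} \le V - \frac{E}{2}\le 1 - b_1(|\mathcal{G}|) + \frac{E}{2} - \text{(correction)},$$
so the constraint $\chi(\mathcal{G}) > -1$ caps $b_1(|\mathcal{G}|)$ and then caps $V$ and $E$. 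I expect this to pin $|\mathcal{G}|$ down to a very short list: an interval with one, two, or three vertices, or a single vertex with one loop, or an interval of two vertices with a loop at one end — precisely the four underlying graphs in (A)–(D). Cases with more vertices, more loops, or $b_1 \ge 2$ are killed because the Euler characteristic drops below $-1$; cases like a single vertex with no loop would give a discal orbifold, not a handlebody of genus $g>1$, and must be excluded separately, as must the case where all vertices carry the trivial group (which would force $G$ trivial or $g\le 1$).

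Finally, for each surviving underlying graph I would translate $\chi(\mathcal{G}) > -1$ (equivalently $\chi(\mathcal{G}) = (1-g)/|G| \in (-1,0)$, noting it is negative since $g>1$) into the inequalities listed: for (A) the three-vertex interval $\mathbb{Z}_l - \mathbb{Z}_m - \mathbb{Z}_n$ has $\chi = 1/l + 1/m + 1/n - 1/|G_{e_1}| - 1/|G_{e_2}|$; using that each edge group injects into its adjacent vertex groups and $G$ cyclic forces the edge group to be the intersection inside $G$, one shows the relevant edge orders are $\max$-type divisors, and after simplification the condition $\chi > -1$ becomes $1/l + 1/m + 1/n > 1$ (the edge contributions being controlled by the vertex ones); similarly (B) gives $0 < 1/m + 1/n < 1$, where the upper bound $<1$ is exactly $\chi < 0$, i.e.\ $g>1$; and (C), (D) are read off directly, with $l,m,n>1$ because a label $1$ vertex would be collapsible or yield a lower-genus/discal piece. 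The main obstacle I anticipate is the bookkeeping around edge groups and trivially-labelled vertices/edges: one must argue carefully that the decomposition can be normalized so that the induced graph of groups is exactly one of (A)–(D) with all indicated labels $>1$, rather than some graph that is only "equivalent" to these after collapsing, and that the cyclicity of $G$ genuinely restricts which injections $\phi_{ev}$ can occur. Handling that normalization, and checking that the $b_1\ge 2$ and "too many vertices" cases really do violate $\chi > -1$ once edge groups are accounted for, is where the real work lies; the rest is elementary arithmetic with the Euler characteristic formula.
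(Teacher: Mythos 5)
Your skeleton is the same as the paper's (discal decomposition via Lemma \ref{Lem of Hanorb}, the identity $\chi(\mathcal{G})=\chi(V_g/G)=(1-g)/|G|\in(-1,0)$ from Lemma \ref{Lem of Euler char equ}, and cyclic labels), but the step you yourself flag as ``where the real work lies'' is mis-aimed, and that is a genuine gap. Your normalization goes the wrong way: you propose to collapse trivial-group edges so that all edge groups are nontrivial, whereas the target graphs (A)--(C) have \emph{trivial} edge groups (compare Lemma \ref{Lem of FundG}, where the fundamental groups are free products, and Lemma \ref{Lem of EulerNo}, where each edge contributes $-1$). A trivial edge joining two nontrivial vertices cannot be collapsed: the union of $B^3/\mathbb{Z}_m$ and $B^3/\mathbb{Z}_n$ along a nonsingular disk is not a discal $3$-orbifold when $m,n>1$ -- such edges are exactly what survive in (A)--(C). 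The normalization that actually works rests on two facts special to cyclic groups acting on balls, which your proposal never isolates: (i) in an orientable discal $3$-orbifold with local group $\mathbb{Z}_n$ the singular set is a single arc of index $n$, so a \emph{nontrivial} edge group forces both adjacent vertex groups to be isomorphic to it, and the two pieces merge into one discal orbifold (likewise a trivial vertex can be absorbed into a neighbour); (ii) a nontrivial vertex meets at most two nontrivial edges. Repeated merging lets one choose a decomposition in which every vertex is nontrivial and every edge of a maximal tree is trivial; this choice of decomposition is essential, since the lemma asserts the existence of such an induced graph, not a property of every decomposition.

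Without that normalization your size-capping argument does not go through. The displayed inequality $\chi(\mathcal{G})\le V-E/2\le\cdots$ is not correctly oriented (if $|G_e|\ge 2$ then $-1/|G_e|\ge -1/2$, which gives a \emph{lower} bound on $\chi$), and in any case $\chi\in(-1,0)$ alone does not cap $V$, $E$ or even pin the graph type: one can insert arbitrarily long chains of trivial vertices joined by trivial edges without changing $\chi$, so an unnormalized induced graph need not be close to (A)--(D). After the correct normalization the count is immediate and elementary: in the tree case $\chi(\mathcal{G})=1-\sum_v(1-1/|G_v|)\le 1-|V|/2$, so $-1<\chi<0$ forces $|V|\in\{2,3\}$, giving (B) and (A) with the stated inequalities read off directly from $\chi\in(-1,0)$ (no ``max-type divisor'' analysis of edge groups is involved, since those edges are trivial); in the case $\chi(|\mathcal{G}|)=0$ one deletes an edge of the unique loop, and the dichotomy trivial/nontrivial for that edge yields (C) and (D). So the plan needs the merging step, justified by the structure of cyclic actions on $B^3$, inserted before any counting; as written, the counting step would fail.
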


\begin{proof}
Let $V_g/G=\bigcup^n_{i=1}B^3_i/G_i$ as in Lemma \ref{Lem of Hanorb}, and $\mathcal{G}$ be the induced finite graph of finite groups. Since $|G|>g-1$, by Definition \ref{Def of Euler char} and Lemma \ref{Lem of Euler char equ}, we have: $$\chi(\mathcal{G})=\chi(V_g/G)=\frac{1-g}{|G|}\in (-1,0).$$

Since $G$ is cyclic, all the vertices and edges of $\mathcal{G}$ are cyclic groups. Hence if an edge $G_e$ is non-trivial, then any vertex of it must be isomorphic to it, and if a vertex $G_v$ is non-trivial, then there are at most two non-trivial edges adjacent to it. Let $V$ and $E$ be the vertex set and edge set of $\mathcal{G}$ respectively, and $V'$ and $E'$ be the non-trivial vertex set and edge set of $\mathcal{G}$ respectively. By Definition \ref{Def of Euler char}, we have:
\begin{align*}
\chi(\mathcal{G})&=\chi(|\mathcal{G}|)+\sum_{G_v\in V}(\frac{1}{|G_v|}-1)-\sum_{G_e\in E}(\frac{1}{|G_e|}-1)\\ &=\chi(|\mathcal{G}|)+\sum_{G_v\in V'}(\frac{1}{|G_v|}-1)-\sum_{G_e\in E'}(\frac{1}{|G_e|}-1)\\
&<\chi(|\mathcal{G}|).
\end{align*}

Hence $\chi(|\mathcal{G}|)\in \{0,1\}$.

If $\chi(|\mathcal{G}|)=1$, then $|\mathcal{G}|$ is a tree. If $\mathcal{G}$ contains a trivial vertex $G_i$, and $G_e$ is an edge adjacent to $G_v$, $G_j$ is the other vertex of $G_e$, then $G_e$ is trivial; if $\mathcal{G}$ contains a non-trivial edge $G_e$, and $G_i$, $G_j$ are its two vertices, then $G_e\cong G_i\cong G_j$. In each case $B^3_i/G_i\cup B^3_j/G_j\cong B^3_j/G_j$. Replacing $B^3_i/G_i$ and $B^3_j/G_j$ by their union, we get a new discal decomposition of $V_g/G$. The new induced finite graph of finite groups will have less trivial vertices or less non-trivial edges. Hence we can assume $V'=V$ and $E'=\emptyset$. Then
$$\chi(\mathcal{G})=1+\sum_{G_v\in V'}(\frac{1}{|G_v|}-1).$$
Since for $G_v\in V'$ we have $|G_v|>1$, $|V'|=|V|$ must be $2$ or $3$. Hence $\mathcal{G}$ will be some finite graph of finite groups in the class (A) or (B).

If $\chi(|\mathcal{G}|)=0$, then we can delete an edge $G_e$ of $\mathcal{G}$ to get a maximal tree $\mathcal{G}'$ of it. Similar to the above discussion, we can assume $V'=V$ and edges in $\mathcal{G}'$ are trivial. The edge $G_e$ may be trivial or non-trivial. Then
$$\chi(\mathcal{G})=\sum_{G_v\in V'}(\frac{1}{|G_v|}-1)-(\frac{1}{|G_e|}-1).$$
If $G_e$ is trivial, then $|V'|=|V|=1$. Hence $\mathcal{G}$ will be some finite graph of finite groups in the class (C). If $G_e$ is non-trivial, then $|V'|=|V|=2$. If $G_e$ is a loop, then $\mathcal{G}$ will be some finite graph of finite groups in the class (D). Otherwise, as above we can get a new $\mathcal{G}$ with less non-trivial edges. Then $\mathcal{G}$ will be some finite graph of finite groups in the class (C).
\end{proof}

Let $\mathcal{C}$ be one of the classes (A--D) in Lemma \ref{Lem of hand classify}. In the following, we say that an orientable handlebody orbifold $\mathcal{H}$ is in the class $\mathcal{C}$ if $\mathcal{H}$ has an induced finite graph of finite groups belonging to $\mathcal{C}$. Following Lemma \ref{Lem of FundG} and \ref{Lem of EulerNo} can be derived easily from the Van-Kampen theorem and Definition \ref{Def of Euler char}.

\begin{lemma}\label{Lem of FundG}
The fundamental groups of the handlebody orbifolds in classes (A--D) are given in Table \ref{tab of FundG}.
\begin{table}[h]
\caption{Fundamental groups of classes (A--D)}\label{tab of FundG}
  \begin{tabular}{|l|l|}
  \hline (A) & $\mathbb{Z}_l\ast\mathbb{Z}_m\ast\mathbb{Z}_n$\\
  \hline (B) & $\mathbb{Z}_m\ast\mathbb{Z}_n$\\
  \hline (C) & $\mathbb{Z}_n\ast\mathbb{Z}$\\
  \hline (D) & $\mathbb{Z}_n\ast (\mathbb{Z}_m\oplus\mathbb{Z})$ \\\hline
  \end{tabular}
\end{table}
\end{lemma}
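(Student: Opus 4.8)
The statement to prove is Lemma \ref{Lem of FundG}, which computes the fundamental groups of the handlebody orbifolds falling into classes (A)--(D) of Lemma \ref{Lem of hand classify}. The plan is to apply the orbifold Van-Kampen theorem to the discal orbifold decomposition $\mathcal{H}=\bigcup_{i=1}^n B^3_i/G_i$ whose induced finite graph of finite groups is the given graph in class (A), (B), (C) or (D). The two ingredients I need are: first, that each discal 3-orbifold $B^3_i/G_i$ in the decomposition has $\pi_1(B^3_i/G_i)\cong G_i$ (since $B^3$ is simply connected, by the remarks in \S\ref{Subsec of OT}); and second, that each common discal 2-orbifold $B^2/G_e$ along which two pieces are glued has $\pi_1(B^2/G_e)\cong G_e$ (for the same reason, $B^2$ being simply connected). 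So, as far as fundamental groups are concerned, the decomposition realizes $\mathcal{H}$ as a graph of spaces whose associated graph of groups is exactly the labelled graph $\mathcal{G}$, and the orbifold Van-Kampen theorem then identifies $\pi_1(\mathcal{H})$ with the fundamental group of that graph of groups.

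From there the computation is the standard Bass--Serre dictionary for a graph of groups, carried out case by case. In case (A) the underlying graph is a segment with three vertices $\mathbb{Z}_l,\mathbb{Z}_m,\mathbb{Z}_n$ and two trivial edges (the edge groups are trivial: if an edge were nontrivial the two adjacent vertex groups would be forced isomorphic to it, which is incompatible with the genericity built into the classification, and in any case the reduction arguments in the proof of Lemma \ref{Lem of hand classify} let us take the edges trivial), so the fundamental group is the free product $\mathbb{Z}_l\ast\mathbb{Z}_m\ast\mathbb{Z}_n$. In case (B), a segment with two vertices and a trivial edge gives $\mathbb{Z}_m\ast\mathbb{Z}_n$. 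In case (C), a single vertex $\mathbb{Z}_n$ with one loop edge labelled by the trivial group yields an HNN extension of $\mathbb{Z}_n$ over the trivial group with trivial stable-letter conjugation, i.e.\ the free product $\mathbb{Z}_n\ast\mathbb{Z}$. In case (D), a segment from $\mathbb{Z}_n$ to $\mathbb{Z}_m$ with trivial connecting edge, together with a loop at $\mathbb{Z}_m$ labelled $\mathbb{Z}_m$ and with both edge-inclusions into the vertex group $\mathbb{Z}_m$ equal to the identity, produces $\mathbb{Z}_n\ast\bigl(\mathbb{Z}_m\ast_{\mathbb{Z}_m}\text{(HNN)}\bigr)$; the HNN piece is $\mathbb{Z}_m\oplus\mathbb{Z}$ because conjugating the vertex group $\mathbb{Z}_m$ by the stable letter $t$ acts trivially (both inclusions of the edge group $\mathbb{Z}_m$ being the identity map), so $t$ is central, giving the direct sum $\mathbb{Z}_m\oplus\mathbb{Z}$; amalgamating this with $\mathbb{Z}_n$ over the trivial edge gives $\mathbb{Z}_n\ast(\mathbb{Z}_m\oplus\mathbb{Z})$.

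The one point that genuinely needs care, and which I expect to be the main (minor) obstacle, is bookkeeping the edge-group inclusion maps $\phi_{ev}\colon G_e\to G_v$ in the loop cases (C) and (D): one must check that the two inclusions of a loop edge into its single adjacent vertex are both isomorphisms (equivalently, since $G_e\cong G_v$ here, both equal to the identity up to automorphism of the cyclic group), which is what makes the HNN conjugation trivial and yields a direct-sum rather than a more complicated semidirect-product factor. This follows from the structure of the discal decomposition: the loop edge corresponds to a discal 2-orbifold $B^2/G_m$ appearing twice on the boundary of the single discal 3-orbifold $B^3/G_m$, and each boundary copy contributes the full local group, so each $\phi_{ev}$ is surjective hence (both groups being finite cyclic of the same order) an isomorphism. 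Once this is noted, the identifications above are forced, and the table follows; I would simply record this as the Van-Kampen computation the lemma asserts, since the paper already flags it as derivable ``easily from the Van-Kampen theorem.''
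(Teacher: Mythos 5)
Your overall route—apply the orbifold Van-Kampen theorem to the discal decomposition, read off the induced graph of groups, and compute its fundamental group case by case—is exactly the argument the paper intends (the paper offers no more than the remark that the lemma ``can be derived easily from the Van-Kampen theorem''), and cases (A), (B), (C) are handled correctly. However, in case (D) your justification of the key point has a genuine gap. You argue that because the loop edge's two inclusions $\phi_{ev}\colon \mathbb{Z}_m\to\mathbb{Z}_m$ are isomorphisms, ``the HNN conjugation is trivial.'' That implication is false: an HNN extension $\langle \mathbb{Z}_m, t \mid t x t^{-1}=\theta(x)\rangle$ with both edge inclusions isomorphisms still depends on the monodromy automorphism $\theta=\phi_{2}\circ\phi_{1}^{-1}$, and $\theta$ is only well defined up to replacing it by $\theta^{-1}$; if $\theta$ were, say, inversion on $\mathbb{Z}_m$ with $m>2$, one would get the semidirect product $\mathbb{Z}_m\rtimes\mathbb{Z}$, which is not $\mathbb{Z}_m\oplus\mathbb{Z}$. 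So ``both maps are isomorphisms'' does not force the direct-sum factor, which is precisely the nontrivial content of row (D) of the table.

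The missing ingredient is orientability. The loop edge glues two discal 2-orbifolds $B^2/\mathbb{Z}_m$ in the boundary of the single discal 3-orbifold $B^3/\mathbb{Z}_m\cong (B^2/\mathbb{Z}_m)\times[0,1]$, so the corresponding piece of the handlebody orbifold is the mapping torus of a self-homeomorphism $f$ of $B^2/\mathbb{Z}_m$; equivalently, a $(B^2/\mathbb{Z}_m)$-bundle over $S^1$ whose singular set is the core circle. Since the handlebody orbifold is orientable (Lemma \ref{Lem of Hanorb}(d)), $f$ must preserve the orientation of the fiber. An orientation-preserving self-homeomorphism of $B^2/\mathbb{Z}_m$ lifts to an equivariant orientation-preserving homeomorphism of $B^2$, and conjugation by such a lift sends the generating rotation $r_m$ to the element of $\langle r_m\rangle$ with the same rotation number, namely $r_m$ itself; hence $f$ induces the identity on the local group $\mathbb{Z}_m\cong\pi_1(B^2/\mathbb{Z}_m)$. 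Only with this observation is $\theta=\mathrm{id}$, the stable letter central, and the factor $\mathbb{Z}_m\oplus\mathbb{Z}$ (rather than a twisted $\mathbb{Z}_m\rtimes\mathbb{Z}$) obtained; amalgamating with $\mathbb{Z}_n$ over the trivial edge then gives $\mathbb{Z}_n\ast(\mathbb{Z}_m\oplus\mathbb{Z})$ as claimed. With this repair your proof is complete and coincides with the intended one.
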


\begin{lemma}\label{Lem of EulerNo}
The Euler characteristic of the finite graphs of finite groups in classes (A--D) are given in Table \ref{tab of EulerNo}.
\begin{table}[h]
\caption{Euler characteristic of classes (A--D)}\label{tab of EulerNo}
  \begin{tabular}{|l|l|}
  \hline (A) & $1/l+1/m+1/n-2$\\
  \hline (B) & $1/m+1/n-1$\\
  \hline (C) & $1/n-1$\\
  \hline (D) & $1/n-1$\\\hline
  \end{tabular}
\end{table}
\end{lemma}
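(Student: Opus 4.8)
The plan is to read the statement straight off Definition \ref{Def of Euler char}, which already expresses $\chi(\mathcal{G})$ as $\sum_{G_v\in V}1/|G_v|-\sum_{G_e\in E}1/|G_e|$; so all that is needed is to substitute the vertex and edge groups of each of the four classes (A--D) of Lemma \ref{Lem of hand classify} and simplify. The one point that needs a little care is the bookkeeping of the trivial edges, whose labels are suppressed in the diagrams of Lemma \ref{Lem of hand classify}: by the normalization carried out inside that proof (arranging $V'=V$ and $E'=\emptyset$ along a maximal tree), the edges lying on the maximal tree are all trivial. Thus in (A) both edges are trivial, in (B) the single edge is trivial, in (C) the single loop is trivial, and in (D) the tree edge joining $\mathbb{Z}_n$ to $\mathbb{Z}_m$ is trivial while the loop at $\mathbb{Z}_m$ carries the group $\mathbb{Z}_m$, exactly as drawn.

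Having fixed the labels, I would simply compute the four sums. For (A), the three vertices $\mathbb{Z}_l,\mathbb{Z}_m,\mathbb{Z}_n$ and two trivial edges give $1/l+1/m+1/n-2$. For (B), the two vertices $\mathbb{Z}_m,\mathbb{Z}_n$ and one trivial edge give $1/m+1/n-1$. For (C), the one vertex $\mathbb{Z}_n$ and one trivial loop give $1/n-1$. For (D), the two vertices $\mathbb{Z}_n,\mathbb{Z}_m$, the trivial tree edge, and the loop $\mathbb{Z}_m$ give $1/n+1/m-1-1/m=1/n-1$, the $1/m$ from the loop cancelling the $1/m$ from the vertex $\mathbb{Z}_m$. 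Each value agrees with the corresponding entry of Table \ref{tab of EulerNo}.

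I do not expect any genuine obstacle here: the lemma is essentially a bookkeeping corollary of the definition. The only thing worth double-checking is the label of the loop in class (D)—it is $\mathbb{Z}_m$, not trivial, and it is exactly this that distinguishes (D) from (C), while the cancellation just noted is what nonetheless forces (C) and (D) to have the same Euler characteristic. As a consistency check one can observe that all four values are negative and, under the displayed inequalities on $l,m,n$, lie in $(-1,0)$, matching $\chi(\mathcal{G})=(1-g)/|G|$ coming from Lemma \ref{Lem of Euler char equ}.
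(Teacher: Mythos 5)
Your computation is correct and is exactly the derivation the paper intends: it states only that the lemma ``can be derived easily from Definition \ref{Def of Euler char}'', and your substitution of the vertex and edge labels (with the unlabelled tree edges trivial, as arranged in the proof of Lemma \ref{Lem of hand classify}, and the loop in class (D) carrying $\mathbb{Z}_m$) reproduces all four table entries, including the cancellation that makes (C) and (D) agree. Nothing further is needed.
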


\begin{lemma}\label{Lem of odd order}
Let $\langle h\rangle$ be a finite cyclic group acting faithfully on a manifold $M$. $\overline{h}$ is the image of $h$ in $\langle h\rangle/\langle h^2\rangle$. Then $\overline{h}$ acts on $M/\langle h^2\rangle$. Suppose $x$ is a fixed point of $\overline{h}$ and $x$ has index $|G_x|$ in $M/\langle h^2\rangle$, then $|\langle h^2\rangle|/|G_x|$ is odd.
\end{lemma}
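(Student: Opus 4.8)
The plan is to translate the statement into elementary arithmetic of the cyclic group $\langle h\rangle$ and the point-stabilizers of the two actions. Write $N=|\langle h\rangle|$. If $N$ is odd, then $\langle h^2\rangle=\langle h\rangle$, the element $\overline h$ is trivial, $G_x$ is the ordinary stabilizer of a preimage of $x$, and $|\langle h^2\rangle|/|G_x|$ is the size of a $\langle h\rangle$-orbit, hence divides $N$ and is odd; so from now on I would assume $N$ is even, which is the only case requiring work.

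First I would fix a preimage $x'\in M$ of $x$ and let $St(x')\le\langle h\rangle$ be its stabilizer for the $\langle h\rangle$-action. The key identification is that the local group $G_x$ of the orbifold $M/\langle h^2\rangle$ at $x$ is, directly from the definition in \S\ref{Subsec of OT}, the stabilizer of $x'$ for the $\langle h^2\rangle$-action, i.e. $G_x=St(x')\cap\langle h^2\rangle$. Since $\langle h^2\rangle$ is the index-$2$ subgroup of even powers, $G_x$ has index $1$ or $2$ in $St(x')$, the index being $2$ precisely when $St(x')$ contains some odd power of $h$.

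Next I would use that $\overline h$ fixes $x$. Unwinding the $\langle h\rangle/\langle h^2\rangle$-action on $M/\langle h^2\rangle$, this says $hx'$ lies in the same $\langle h^2\rangle$-orbit as $x'$, so $h^{1-2k}x'=x'$ for some $k\in\mathbb{Z}$; reducing the odd exponent modulo the even number $N$ gives a nonzero odd $r$ with $h^r\in St(x')$. Hence $St(x')$ is nontrivial and meets $\langle h^2\rangle$ in index $2$, so $|G_x|=|St(x')|/2$. Writing $d=|St(x')|$, so $St(x')=\langle h^{N/d}\rangle$, the condition $h^r\in\langle h^{N/d}\rangle$ forces $N/d\mid r$, and since $r$ is odd, $N/d$ is odd. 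Therefore
$$\frac{|\langle h^2\rangle|}{|G_x|}=\frac{N/2}{d/2}=\frac{N}{d},$$
which is odd, as claimed.

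There is no serious obstacle here: the argument is a short bookkeeping computation in $\mathbb{Z}_N$. The only points that need care are the identification $G_x=St(x')\cap\langle h^2\rangle$ (immediate from the definition of the local group of a quotient orbifold), the observation that ``$\overline h$ fixes $x$'' produces an \emph{odd} power of $h$ in $St(x')$ and not merely some element, and separating off the case $N$ odd so that the halving $d/2$ is legitimate (when $N$ is even, $N/d$ odd forces $d$ even).
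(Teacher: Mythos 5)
Your proof is correct and takes essentially the same route as the paper's: both hinge on the observation that $\overline{h}$ fixing $x$ means $hx'$ lies in the $\langle h^2\rangle$-orbit of $x'$, i.e.\ some odd power of $h$ stabilizes $x'$. The paper states this as the $\langle h\rangle$-orbit and $\langle h^2\rangle$-orbit of $x'$ coinciding, so the preimage of $x$ has an odd number of points; you carry out the equivalent orbit--stabilizer bookkeeping explicitly (identifying $G_x=St(x')\cap\langle h^2\rangle$ and deducing $N/d$ odd), which fills in the step the paper leaves terse.
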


\begin{proof}
Let $x'$ be a pre-image of $x$ in $M$. Since $x$ is a fixed point of $\overline{h}$, the orbit $\{h^i(x')\mid i\in\mathbb{Z}\}$ and $\{h^{2i}(x')\mid i\in\mathbb{Z}\}$ are the same. Hence the pre-image of $x$ in $M$ contains odd points. Namely $|\langle h^2\rangle|/|G_x|$ is odd.
\end{proof}

\begin{lemma}\label{Lem of fixed point}
Let $h$ be a periodic map of order $2$ on a compact manifold $M$. If the Euler characteristic of $M$ is odd, then $h$ has a fixed point.
\end{lemma}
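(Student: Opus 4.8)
The plan is to argue by contradiction using the multiplicativity of the Euler characteristic under finite covering maps. Suppose $h$ is a periodic map of order $2$ on the compact manifold $M$ that has no fixed point. Since $h^2=\mathrm{id}$ and $h\neq\mathrm{id}$, the cyclic group $\langle h\rangle\cong\mathbb{Z}_2$ acts on $M$, and the hypothesis that $h$ has no fixed point means precisely that this action is free.

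For a free action of a finite group on a manifold, the quotient is again a manifold and the quotient map is a covering map whose number of sheets equals the order of the group. Hence $p\co M\to M/\langle h\rangle$ is a $2$-fold covering and $M/\langle h\rangle$ is a compact manifold (in orbifold language, $M/\langle h\rangle$ has no singular points exactly because the action is free, so its underlying space is a closed manifold and it carries no extra orbifold data). The Euler characteristic is multiplicative under finite covers, so $\chi(M)=2\,\chi\bigl(M/\langle h\rangle\bigr)$, which is even. This contradicts the assumption that $\chi(M)$ is odd, so $h$ must have a fixed point.

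There is essentially no serious obstacle here: the argument only uses that a free finite group action on a manifold has a manifold quotient with the quotient map a covering, and that $\chi$ scales by the number of sheets of a finite cover. For readers who prefer a homological argument, one can instead invoke the Lefschetz fixed point theorem: decompose each $H_i(M;\mathbb{Q})$ into the $(+1)$- and $(-1)$-eigenspaces $H_i^{+}$ and $H_i^{-}$ of $h_{\ast}$ (possible since $h_\ast^2=\mathrm{id}$); then $\chi(M)-L(h)=2\sum_i(-1)^i\dim H_i^{-}$ is even, while the absence of fixed points forces $L(h)=0$, so $\chi(M)$ is even, the same contradiction.
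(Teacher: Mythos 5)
Your proposal is correct, but your primary argument takes a genuinely different route from the paper. The paper argues directly via the Lefschetz number: since $h_*^2=\mathrm{id}$, the eigenvalues of $h_{*i}$ on $H_i(M,\mathbb{R})$ are $\pm 1$, so $L(h)\equiv\chi(M)\pmod 2$; oddness of $\chi(M)$ gives $L(h)\neq 0$, and the Lefschetz fixed point theorem produces a fixed point. Your main argument instead assumes $h$ is fixed-point free, notes that the $\mathbb{Z}_2$-action is then free, so $M\to M/\langle h\rangle$ is a $2$-fold covering of a compact manifold, and concludes $\chi(M)=2\chi(M/\langle h\rangle)$ is even, a contradiction. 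Both are sound. The covering-space argument is more elementary in that it needs only multiplicativity of $\chi$ under finite covers (legitimate here since, in the paper's smooth setting, $M$ and the quotient are compact manifolds, hence finite CW complexes), but it requires forming the quotient and knowing it is again a nice space; the paper's argument never leaves $M$ and works purely with the induced action on homology, at the cost of invoking the Lefschetz fixed point theorem. Your closing remark about eigenspace decomposition and the parity of $\chi(M)-L(h)$ is essentially verbatim the paper's proof, so you have in effect recovered both arguments.
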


\begin{proof}
Since $h$ has order $2$, the induced linear map $h_{*i}:H_i(M,\mathbb{R})\rightarrow H_i(M,\mathbb{R})$ can only have eigenvalues $\pm 1$. Let $L(h)$ be the Lefschetz number of $h$, and $\chi(M)$ be the Euler characteristic of $M$. Then $L(h)\equiv\chi(M) (mod\, 2)$. Since $\chi(M)$ is odd, $L(h)\neq 0$. Then by the Lefschetz fixed point theorem, $h$ has a fixed point.
\end{proof}

\begin{lemma}\label{Lem of Tocyclic}
Let $l,m,n\in\mathbb{Z}_+$, and for $1\leq i\leq l$, $m_i \in\mathbb{Z}_+$. Let $[m_1,m_2,\cdots ,m_l]$ be the lowest common multiple of $m_i$, $1\leq i\leq l$.

(1). If $\phi:\mathbb{Z}_{m_1}\ast\mathbb{Z}_{m_2}\ast\cdots \ast\mathbb{Z}_{m_l}\rightarrow \mathbb{Z}_n$ is a finitely injective surjection, then $n=[m_1,m_2,\cdots ,m_l]$.

(2). If $\phi:\mathbb{Z}_{m_1}\ast\mathbb{Z}_{m_2}\ast\cdots \ast\mathbb{Z}_{m_l}\ast(\mathbb{Z}_{m}\oplus\mathbb{Z})\rightarrow \mathbb{Z}_n$ is a finitely injective surjection, then $n=t[m_1,m_2,\cdots, m_l,m]$ for some $t\in \mathbb{Z}_+$.
\end{lemma}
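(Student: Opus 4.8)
The plan is to use two complementary features of a finitely injective surjection $\phi$ onto a cyclic group: injectivity on the torsion pieces forces divisibility relations $m_i\mid n$ (and $m\mid n$), which gives the lower bound $[m_1,\ldots,m_l]\mid n$; and surjectivity, together with the fact that a cyclic group has exactly one subgroup of each order, pins $n$ down from above in case (1).

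For part (1) I would argue as follows. Each free factor $\mathbb{Z}_{m_i}$ is a finite subgroup of $\mathbb{Z}_{m_1}\ast\cdots\ast\mathbb{Z}_{m_l}$, so $\phi$ restricted to it is injective and $\phi(\mathbb{Z}_{m_i})$ is a cyclic subgroup of $\mathbb{Z}_n$ of order exactly $m_i$; hence $m_i\mid n$ for all $i$ and therefore $[m_1,\ldots,m_l]\mid n$. For the reverse divisibility, work additively: $\phi(\mathbb{Z}_{m_i})$ is the unique order-$m_i$ subgroup of $\mathbb{Z}_n$, namely $\langle n/m_i\rangle$. Because $\phi$ is onto and the factors generate the free product, the subgroups $\phi(\mathbb{Z}_{m_i})$ jointly generate $\mathbb{Z}_n$; their join is $\langle\gcd_i(n/m_i)\rangle=\langle n/[m_1,\ldots,m_l]\rangle$, which has order $[m_1,\ldots,m_l]$. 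Equality with $\mathbb{Z}_n$ then forces $n=[m_1,\ldots,m_l]$.

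For part (2) the lower-bound half is identical: each $\mathbb{Z}_{m_i}$, together with the torsion subgroup $\mathbb{Z}_m\leq\mathbb{Z}_m\oplus\mathbb{Z}$, is a finite subgroup on which $\phi$ is injective, so $m_i\mid n$ and $m\mid n$, whence $[m_1,\ldots,m_l,m]\mid n$ and we may set $t=n/[m_1,\ldots,m_l,m]\in\mathbb{Z}_+$. Here there is nothing further to prove, and indeed surjectivity is not needed: the free $\mathbb{Z}$ factor may be sent to an arbitrary element of $\mathbb{Z}_n$, so it imposes no constraint on $n$.

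I do not expect a genuine obstacle here; the statement is elementary once one isolates the correct finite subgroups to test $\phi$ on. The only points demanding a little care are the standard bookkeeping for cyclic groups — uniqueness of the order-$d$ subgroup of $\mathbb{Z}_n$ and the identity $\gcd_i(n/m_i)=n/[m_1,\ldots,m_l]$ when each $m_i\mid n$ — and, if one prefers to phrase finite injectivity in terms of \emph{all} finite subgroups rather than the literal free factors, the Kurosh subgroup theorem (every finite subgroup of a free product is conjugate into a factor); but for this lemma it is enough to evaluate $\phi$ on the factors themselves.
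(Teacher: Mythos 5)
Your proof is correct and is essentially the paper's argument: both directions come from the same two-sided divisibility, with injectivity on each finite factor forcing $[m_1,\ldots,m_l]\mid n$ and surjectivity forcing $n\mid[m_1,\ldots,m_l]$ (the paper phrases the latter by abelianizing and comparing $p$-components, while you phrase it via the join of the unique order-$m_i$ subgroups of $\mathbb{Z}_n$ and the identity $\gcd_i(n/m_i)=n/[m_1,\ldots,m_l]$, which is the same computation). Your observation that part (2) needs only the injectivity direction likewise matches the paper's proof.
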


\begin{proof}
(1). The homomorphism $\phi$ induces a surjective homomorphism
$$\phi':\mathbb{Z}_{m_1}\oplus\mathbb{Z}_{m_2}\oplus\ldots \oplus\mathbb{Z}_{m_l}\rightarrow \mathbb{Z}_n,$$
which is injective on $\mathbb{Z}_{m_i}$, $1\leq i\leq l$. For any prime number $p$, let $\mathbb{Z}_{p^{\alpha_i}}$ be the $p$-component of $\mathbb{Z}_{m_i}$, and $\mathbb{Z}_{p^{\beta}}$ be the $p$-component of $\mathbb{Z}_n$. Since $\phi'$ is surjective, we have $max\{\alpha_1,\alpha_2,\cdots,\alpha_l\}\geq\beta$. Otherwise generators of $\mathbb{Z}_{p^{\beta}}$ will not lie in the image of $\phi'$. Since $\phi'$ is injective on $\mathbb{Z}_{m_i}$, we have $\beta\geq max\{\alpha_1,\alpha_2,\cdots,\alpha_l\}$. Hence $\beta=max\{\alpha_1,\alpha_2,\cdots,\alpha_l\}$, namely $n=[m_1,m_2,\cdots,m_l]$.

(2). The homomorphism $\phi$ induces a surjective homomorphism
$$\phi':\mathbb{Z}_{m_1}\oplus\mathbb{Z}_{m_2}\oplus\ldots \oplus\mathbb{Z}_{m_l}\oplus (\mathbb{Z}_{m}\oplus\mathbb{Z})\rightarrow \mathbb{Z}_n,$$
which is injective on $\mathbb{Z}_{m}$ and $\mathbb{Z}_{m_i}$, $1\leq i\leq l$. Let $\mathbb{Z}_{p^{\alpha}}$ be the $p$-component of $\mathbb{Z}_{m}$, then $\beta\geq max\{\alpha_1,\cdots,\alpha_l,\alpha\}$. Hence $n=t[m_1,\cdots,m_l,m]$ for some $t\in \mathbb{Z}_+$.
\end{proof}

\subsection{Maximum orders of cyclic and abelian actions on handlebodies}

\begin{proposition}\label{Pro of chg}
Table \ref{tab of maxorder of ORhandlebody} gives an upper bound of $CH_g^-$.
\end{proposition}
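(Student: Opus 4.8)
The plan is to bound $CH_g^-$ by analyzing a cyclic group $G=\langle h\rangle$ acting on $V_g$ with $h$ reversing orientation, via the index-two orientation-preserving subgroup $\langle h^2\rangle$. Write $N=|G|$, so $|\langle h^2\rangle|=N/2$. Since $\langle h^2\rangle$ acts orientation-preservingly on $V_g$, the quotient $V_g/\langle h^2\rangle$ is an orientable handlebody orbifold $\mathcal{H}$, and if $N>2(g-1)$ (which we may assume, since otherwise the bound is easy to beat with the examples in \S\ref{Sec of examples}), then $|\langle h^2\rangle|>g-1$ and Lemma \ref{Lem of hand classify} applies: $\mathcal{H}$ lies in one of the four classes (A)--(D). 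Now $\overline h$, the generator of $G/\langle h^2\rangle\cong\mathbb{Z}_2$, acts on $\mathcal{H}$ (more precisely on $V_g/\langle h^2\rangle$), and this $\mathbb{Z}_2$-action is reflected in a symmetry of the induced finite graph of finite groups $\mathcal{G}$. The strategy is: (i) enumerate, for each of (A)--(D), the possible $\mathbb{Z}_2$-symmetries of $\mathcal{G}$; (ii) use Lemma \ref{Lem of fixed point} (applied to $\overline h$ acting on the appropriate underlying space, whose Euler characteristic we control) together with Lemma \ref{Lem of odd order} to constrain the indices $l,m,n$ — in particular the oddness conditions $2\nmid m$, $2\nmid n$, $2\nmid k$ appearing in Table \ref{tab of maxorder of ORhandlebody} should come out of Lemma \ref{Lem of odd order}; (iii) compute $N=2\cdot|\langle h^2\rangle|$ in terms of $l,m,n$ using Lemma \ref{Lem of Euler char equ} (so $1-g=\chi(\mathcal{G})\cdot(N/2)$) and Lemma \ref{Lem of Tocyclic} (so that $|\langle h^2\rangle|$ equals the relevant lowest common multiple), and read off the genus relation.

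In more detail: for class (C), $\mathcal{G}$ is a single vertex $\mathbb{Z}_n$ with a loop, $\pi_1=\mathbb{Z}_n*\mathbb{Z}$, $\chi(\mathcal{G})=1/n-1$, so $g-1=(N/2)(1-1/n)=(N/2)-(N/2n)$; writing $N/2=kn$ this gives $g=kn-k+1$ with $N=2kn$, matching the odd-$g$ formula. For class (B), $\chi=1/m+1/n-1$ and $|\langle h^2\rangle|=[m,n]$ by Lemma \ref{Lem of Tocyclic}(1), giving $g-1=[m,n]-[m,n]/m-[m,n]/n=[m,n]-(m+n)/(m,n)$ and $N=2[m,n]$, matching the even-$g$ formula. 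Classes (A) and (D) must then be shown \emph{not} to give anything larger (or to reduce to (B)/(C)): class (A) has $\chi>-1$ bounded away from $0$ in a way that forces $g$ small relative to $N$ only for the spherical triples $(l,m,n)$, and those finitely many cases can be checked directly or ruled out by the orientation-reversing symmetry plus Lemma \ref{Lem of fixed point}; class (D) has $\chi=1/n-1$ but $\pi_1=\mathbb{Z}_n*(\mathbb{Z}_m\oplus\mathbb{Z})$, and Lemma \ref{Lem of Tocyclic}(2) forces $|\langle h^2\rangle|$ to be a multiple of $[n,m]$, which combined with the genus equation shows it cannot beat (C). Finally the parity split between even and odd $g$ comes from which classes can actually support an orientation-reversing $\overline h$: the involution on $\mathcal{G}$ must be compatible with Lemma \ref{Lem of change sides} and with $\overline h$ having (or not having) a fixed point, and this is where the two separate formulas come from.

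The main obstacle I expect is step (ii): carefully extracting the oddness constraints and the case analysis of $\mathbb{Z}_2$-symmetries of $\mathcal{G}$. One has to be careful that $\overline h$ acts on the \emph{orbifold} $V_g/\langle h^2\rangle$, not just on the graph $\mathcal{G}$, so a symmetry of $\mathcal{G}$ (e.g. swapping the two ends of a segment in (B), or flipping the loop in (C), or inverting along the loop) must be realized by an actual orbifold automorphism, and its fixed-point structure on the singular set interacts with Lemma \ref{Lem of odd order}: a fixed singular point $x$ of index $|G_x|$ in the quotient forces $(N/2)/|G_x|$ odd, which is exactly how $m,n,k$ get forced to be odd. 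The bookkeeping of which vertices/edges are fixed, swapped, or have their groups inverted under $\overline h$, in each of (A)--(D), and translating each into a numerical constraint, is the delicate part; everything after that is the routine arithmetic of $\chi(\mathcal{G})=(1-g)/(N/2)$ together with Lemma \ref{Lem of Tocyclic}.
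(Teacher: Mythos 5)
Your skeleton matches the paper's (pass to the index-two orientation-preserving subgroup $\langle h^2\rangle$, invoke Lemma \ref{Lem of hand classify} to place $V_g/\langle h^2\rangle$ in classes (A)--(D), then combine Lemmas \ref{Lem of Euler char equ}, \ref{Lem of EulerNo}, \ref{Lem of FundG} and \ref{Lem of Tocyclic} for the arithmetic), and your computations for classes (B) and (C) are the right ones. But your step (ii) --- the step you yourself flag as delicate --- contains a genuine gap, and it is exactly the crux of the proposition. You propose to extract the oddness of $m,n,k$ from Lemma \ref{Lem of odd order} applied at a fixed \emph{singular} point of $\overline h$. That lemma only says $|\langle h^2\rangle|/|G_x|$ is odd for that particular fixed point; it does not constrain the individual indices. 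Concretely, in class (B) with $(m,n)=(4,3)$ a fixed point on the index-$4$ arc gives $[4,3]/4=3$, which is odd, so nothing is contradicted even though $m$ is even; and if such a configuration were not excluded it would yield an action of order $2[4,3]=24$ on $V_6$, strictly larger than the table's value $18$ for $g=6$. So oddness of \emph{both} $m$ and $n$ (and of $k$ and $n$ in class (C), and of $k,m,n$ in class (D)) must come from somewhere else, and without it neither the formulas nor the even/odd split of $g$ follow. The paper gets it from fixed \emph{regular} points: for classes (B)--(D) the complement $\overline{(V_g/\langle h^2\rangle)\setminus N(\Theta)}$ of a neighborhood of the singular set is a compact manifold of Euler characteristic $-1$, so Lemma \ref{Lem of fixed point} gives $\overline h$ a fixed point there, and Lemma \ref{Lem of odd order} with $|G_x|=1$ forces $|\langle h^2\rangle|$ itself to be odd; all the oddness conditions drop out at once, and the parity of $g$ is then pure arithmetic ($[m,n]-(m+n)/(m,n)+1$ is even and $kn-k+1$ is odd when the parameters are odd), not a consequence of which classes ``support'' an orientation-reversing involution.

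Two further points. First, your appeal to Lemma \ref{Lem of change sides} is out of place: that lemma concerns a closed surface embedded in $S^3$, and there is no ambient $S^3$ in this proposition; no such compatibility condition enters the paper's argument. Second, your treatment of class (A) as ``finitely many cases'' overlooks the infinite dihedral family $(2,2,k)$, which is precisely where a competitor of order $4k$ at genus near $2k$ lives and must be compared against the table. The paper handles class (A) by the opposite parity mechanism: there $|\langle h^2\rangle|=[l,m,n]$ is automatically even, so the regular-point argument is unavailable; instead $\chi(\Theta)$ is odd, Lemma \ref{Lem of fixed point} puts a fixed point of $\overline h$ \emph{in} $\Theta$, and Lemma \ref{Lem of odd order} then forces that point's index to be even, which (together with the list of triples satisfying $1/l+1/m+1/n>1$) narrows the possibilities to cases whose orders are dominated by the table. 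Without repairing step (ii) along these lines, the proposal does not establish the stated upper bound.
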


\begin{proof}
Let $\langle h\rangle$ be a cyclic group acting on $V_g$. $h$ reverses the orientation of $V_g$. Let $\overline{h}$ be the image of $h$ in $\langle h\rangle/\langle h^2\rangle$. Then $\overline{h}$ acts on $V_g/\langle h^2\rangle$ and has order $2$.


By (2) and (2$'$) in Example \ref{Ex of cage}, we have $CH_g^->2g-2$. If $|\langle h\rangle|=CH_g^-$, then $|\langle h^2\rangle|>g-1$. By Lemma \ref{Lem of hand classify}, $V_g/\langle h^2\rangle$ is in classes (A--D). Let $\Theta$ be the set of singular points in $V_g/\langle h^2\rangle$, and $N(\Theta)$ be a $\overline{h}$-invariant regular neighborhood of $\Theta$. Then $\overline{h}$ acts on $\Theta$ and $\overline{(V_g/\langle h^2\rangle)\backslash N(\Theta)}$ respectively.

{\bf Case 1}: $V_g/\langle h^2\rangle$ is in the class (A).

Then $|\langle h^2\rangle|$ is even. By Lemma \ref{Lem of fixed point}, $\overline{h}$ will have a fixed point in $\Theta$. Then by Lemma \ref{Lem of odd order}, for a fixed point $x$ we have that $|\langle h^2\rangle|/|G_x|$ is odd. Hence the index of $x$ must be even. Then we have $\{l,m,n\}=\{2,3,3\}$ or $\{2,2,2k\}$, $k\in \mathbb{Z}_+$.

If $\{l,m,n\}=\{2,3,3\}$, then by Lemma \ref{Lem of handcover}, there is a finitely injective surjection $\phi:\pi_1(V_g/\langle h^2\rangle)\rightarrow \langle h^2\rangle$. By Lemma \ref{Lem of FundG} and \ref{Lem of Tocyclic}, $\pi_1(V_g/\langle h^2\rangle)\cong \mathbb{Z}_2\ast\mathbb{Z}_3\ast\mathbb{Z}_3$ and $\langle h^2\rangle\cong \mathbb{Z}_6$. Hence $|\langle h\rangle|=12$. Then by Lemma \ref{Lem of EulerNo}, we have $g=6$.

Similarly, if $\{l,m,n\}=\{2,2,2k\}$, then $\langle h^2\rangle\cong \mathbb{Z}_{2k}$, $|\langle h\rangle|=4k$ and $g=2k$.

{\bf Case 2}: $V_g/\langle h^2\rangle$ is in the classes (B--D).

By Lemma \ref{Lem of fixed point}, $\overline{h}$ will have a fixed point $x$ in $\overline{(V_g/\langle h^2\rangle)\backslash N(\Theta)}$. By Lemma \ref{Lem of odd order}, $|\langle h^2\rangle|/|G_x|=|\langle h^2\rangle|$ is odd. Then by Lemma \ref{Lem of handcover}, \ref{Lem of FundG}, \ref{Lem of Tocyclic} and \ref{Lem of EulerNo}, we have:

If $V_g/\langle h^2\rangle$ is in the class (B), then $\pi_1(V_g/\langle h^2\rangle)\cong \mathbb{Z}_m\ast\mathbb{Z}_n$, and $\langle h^2\rangle\cong \mathbb{Z}_{[m,n]}$. Then $|\langle h\rangle|=2[m,n]$, and $g=[m,n]-(m+n)/(m,n)+1$. Since $n$ and $m$ are odd, $g$ is even.

If $V_g/\langle h^2\rangle$ is in the class (C), then $\pi_1(V_g/\langle h^2\rangle)\cong \mathbb{Z}_n\ast\mathbb{Z}$, and $\langle h^2\rangle\cong \mathbb{Z}_{kn}$. Then $|\langle h\rangle|=2kn$, and $g=kn-k+1$. Since $k$ and $n$ are odd, $g$ is odd.

If $V_g/\langle h^2\rangle$ is in the class (D), then $\pi_1(V_g/\langle h^2\rangle)\cong \mathbb{Z}_n\ast(\mathbb{Z}_m\oplus\mathbb{Z}$), and we have $\langle h^2\rangle\cong \mathbb{Z}_{k[m,n]}$. Then $|\langle h\rangle|=2k[m,n]$, and $g=k[m,n]-k[m,n]/n+1$. Since $k$, $m$ and $n$ are odd, $g$ is odd. Let $m=m'(m,n)$, then $|\langle h\rangle|=2(km')n$, and we have $g=(km')n-km'+1$.
\end{proof}

\begin{proposition}\label{Pro of chgcons}
Suppose $k,m,n\in \mathbb{Z}_+$. If $g=[m,n]-(m+n)/(m,n)+1$, then there is a cyclic group $\langle h\rangle$ of order $2[m,n]$ acting on $V_g$. And $h$ reverses the orientation of $V_g$.

If $g=kn-k+1$, then there is a cyclic group $\langle h\rangle$ of order $2kn$ acting on $V_g$. And $h$ reverses the orientation of $V_g$.
\end{proposition}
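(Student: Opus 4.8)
The plan is to produce each cyclic group $\langle h\rangle$ by first building its orientation\nobreakdash-preserving index\nobreakdash-two subgroup through Lemma~\ref{Lem of handcover}, and then enlarging it by a commuting, orientation\nobreakdash-reversing involution of $V_g$. We may and do assume $m,n$ (resp.\ $k,n$) odd, since this is the case needed for Proposition~\ref{Pro of ORhandlebody}. For the first statement put $N=[m,n]$ and $d=(m,n)$; then $N$ is odd. Let $\mathcal H$ be the handlebody orbifold of class (B) in Lemma~\ref{Lem of hand classify} with vertex groups $\mathbb Z_m,\mathbb Z_n$ and trivial edge, so that $\pi_1(\mathcal H)\cong\mathbb Z_m\ast\mathbb Z_n$ (Lemma~\ref{Lem of FundG}) and $\chi(\mathcal H)=1/m+1/n-1$ (Lemma~\ref{Lem of EulerNo}). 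Define $\phi\co\mathbb Z_m\ast\mathbb Z_n\to\mathbb Z_N$ by sending the two free generators to $N/m$ and $N/n$. These have orders exactly $m$ and $n$ in $\mathbb Z_N$, so $\phi$ is injective on each free factor; and $\gcd(N/m,N/n)=\gcd(n/d,m/d)=1$, so their images generate $\mathbb Z_N$, whence $\phi$ is a finitely injective surjection. Since $N/m+N/n=(m+n)/(m,n)$, one has $\chi(\mathcal H)\,|\mathbb Z_N|=N/m+N/n-N=1-g$, so Lemma~\ref{Lem of handcover} gives an orientation\nobreakdash-preserving action of $\mathbb Z_N$ on $V_g$ with $V_g/\mathbb Z_N\cong\mathcal H$.

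Next I would upgrade this $\mathbb Z_N$\nobreakdash-action. Realize $\mathcal H$ concretely as the unit ball in $E^3$ together with two disjoint \emph{vertical} singular chords, of indices $m$ and $n$, placed symmetrically about the plane $z=0$, the chord meridians being the free generators of $\pi_1(\mathcal H)$. The reflection $\bar\iota\co(x,y,z)\mapsto(x,y,-z)$ is an orientation\nobreakdash-reversing automorphism of the orbifold $\mathcal H$ that fixes each singular chord setwise; since $\bar\iota$ is the identity on the $(x,y)$\nobreakdash-directions, which span the normal plane of each chord, it preserves each meridian, and hence $\phi\circ\bar\iota_\ast=\phi$ as maps into the abelian group $\mathbb Z_N$. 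Therefore $\bar\iota_\ast$ preserves $\ker\phi$, so $\bar\iota$ lifts to $V_g$, and the group $\widetilde G$ generated by the $\mathbb Z_N$\nobreakdash-action together with one lift of $\bar\iota$ fits in an extension $1\to\mathbb Z_N\to\widetilde G\to\mathbb Z_2\to1$ whose conjugation action of $\mathbb Z_2$ on $\mathbb Z_N$ is the automorphism induced by $\bar\iota$ through $\phi$, i.e.\ the identity. Thus $\widetilde G$ is abelian of order $2N$; since $N$ is odd, $\widetilde G\cong\mathbb Z_{2N}$. Taking $h$ to generate $\widetilde G$: a generator cannot lie in the index\nobreakdash-two orientation\nobreakdash-preserving subgroup, so $h$ reverses the orientation of $V_g$, proving the first statement.

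For the second statement I would run the same argument with the class (C) orbifold $\mathcal H'$. By Lemmas~\ref{Lem of FundG} and \ref{Lem of EulerNo}, $\pi_1(\mathcal H')\cong\mathbb Z_n\ast\mathbb Z$ and $\chi(\mathcal H')=1/n-1$; the homomorphism sending the $\mathbb Z_n$\nobreakdash-generator to $k$ (of order $n$ in $\mathbb Z_{kn}$) and the $\mathbb Z$\nobreakdash-generator to $1$ is a finitely injective surjection $\phi'$ onto $\mathbb Z_{kn}$ with $\chi(\mathcal H')\,kn=k-kn=1-g$, so Lemma~\ref{Lem of handcover} yields an orientation\nobreakdash-preserving $\mathbb Z_{kn}$\nobreakdash-action on $V_g$ with quotient $\mathcal H'$. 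Model $\mathcal H'$ as $S^1\times D^2$ with a single vertical singular chord of index $n$ lying in one $D^2$\nobreakdash-fibre and disjoint from the core, and let $\bar\iota'=\mathrm{id}_{S^1}\times(w\mapsto\bar w)$; this is an orientation\nobreakdash-reversing involution fixing the chord setwise and the core loop, and it preserves the chord's meridian (whose normal plane is spanned by the $S^1$\nobreakdash-direction and the real axis of $D^2$, both fixed by $\bar\iota'$), so $\phi'\circ\bar\iota'_\ast=\phi'$. Lifting $\bar\iota'$ exactly as above gives $\widetilde G\cong\mathbb Z_2\times\mathbb Z_{kn}\cong\mathbb Z_{2kn}$ (using $kn$ odd) acting on $V_g$ with orientation\nobreakdash-reversing generator $h$.

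\textbf{Main obstacle.} The only delicate point — and the only place the parity of $m,n$ (resp.\ $k,n$) enters — is the final assembly: the lift of $\bar\iota$ must \emph{commute} with the $\mathbb Z_N$\nobreakdash-action, not invert it, for otherwise $\widetilde G$ would be the dihedral group of order $2N$ rather than cyclic. This is precisely why $\bar\iota$ is chosen to preserve (not reverse) chord meridians, so that the induced automorphism of $\mathbb Z_N$ is trivial, and why one then needs $\gcd(2,N)=1$ to pass from ``$\mathbb Z_2\times\mathbb Z_N$'' to ``$\mathbb Z_{2N}$.'' I expect the remaining, routine work to be checking that these symmetric models of $\mathcal H$ and $\mathcal H'$ really are the handlebody orbifolds of classes (B) and (C) with the claimed $\pi_1$ and $\chi$ (via Lemma~\ref{Lem of Hanorb}), and bookkeeping of basepoints in the lifting criterion.
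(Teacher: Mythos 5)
Your construction of the orientation-preserving part is exactly the paper's: the same class (B) (resp.\ class (C)) handlebody orbifold, the same kind of finitely injective surjection onto $\mathbb{Z}_{[m,n]}$ (resp.\ $\mathbb{Z}_{kn}$), and Lemma \ref{Lem of handcover} together with the Euler characteristic count $\chi(\mathcal{H})\,|G|=1-g$. Where you genuinely differ is in producing the commuting orientation-reversing involution: the paper writes $\mathcal{H}=\mathcal{F}\times[-1,1]$ with $\mathcal{F}$ a disc with two cone points of indices $m,n$ (resp.\ an annulus with one cone point of index $n$); the product structure lifts, so $V_g=F\times[-1,1]$ with $F$ the pre-image of $\mathcal{F}\times\{0\}$, and the involution is simply $Id_F\times(-Id_{[-1,1]})$, which visibly commutes with the deck group --- no lifting criterion, basepoint bookkeeping, or extension analysis is needed. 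Your route --- a reflection $\bar\iota$ of a concrete model of $\mathcal{H}$ with $\phi\circ\bar\iota_*=\phi$, lifted through the covering, with the extension $1\to\mathbb{Z}_N\to\widetilde{G}\to\mathbb{Z}_2\to 1$ shown to be central --- is correct, but it is essentially the quotient-level shadow of the same involution, bought at the price of the heavier covering-space argument you yourself flag as the delicate point; the paper's product trick is what makes that point disappear. One further remark: you restrict to odd $m,n$ (resp.\ $k,n$). This is indeed what forces cyclicity (otherwise one only obtains $\mathbb{Z}_2\oplus\mathbb{Z}_N$), and it is all that Proposition \ref{Pro of ORhandlebody} needs; the paper's statement and proof leave this parity condition implicit even though its construction has exactly the same limitation, so your explicit hypothesis loses nothing relative to the paper and is, if anything, the more careful formulation.
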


\begin{proof}
We can assume $m,n>1$. For $g=[m,n]-(m+n)/(m,n)+1$, let $\mathcal{F}$ be a 2-orbifold. It has underlying space $B^2$ and contains two singular points of indices $m$ and $n$ in the interior of $B^2$. Let $\mathcal{H}=\mathcal{F}\times [-1,1]$. It is in the class (B). $\pi_1(\mathcal{H})=\mathbb{Z}_m\ast\mathbb{Z}_n$, and there is a finitely injective surjection $\phi:\pi_1(\mathcal{H})\rightarrow \mathbb{Z}_{[m,n]}$. Hence by Lemma \ref{Lem of handcover}, there is a $\mathbb{Z}_{[m,n]}$-action on $V_g$ such that $V_g/\mathbb{Z}_{[m,n]}\cong \mathcal{H}$.

The pre-image of $\mathcal{F}\times \{0\}$ in $V_g$ is a surface $F$ and $V_g=F\times [-1,1]$. Then the product $Id_F\times (-Id_{[-1,1]})$ acts of $V_g$ and has order $2$. It is commutative with the $\mathbb{Z}_{[m,n]}$-action. Hence we have a cyclic action of order $2[m,n]$ on $V_g$. Its generator reverses the orientation of $V_g$.

For $g=kn-k+1$, the construction is similar. In this case $\mathcal{F}$ has underlying space an annulus and contains one singular point of index $n$ in its interior. $\mathcal{H}=\mathcal{F}\times [-1,1]$ is in the class (C).
\end{proof}

\begin{proposition}\label{Pro of ahg}
$AH_g^-=2AH_g$.
\end{proposition}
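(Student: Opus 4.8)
The plan is to prove the two inequalities $AH_g^- \ge 2 AH_g$ and $AH_g^- \le 2 AH_g$ separately. For the lower bound, I would start from an abelian $G$-action on $V_g$ realizing the order $AH_g$ (which by the orientation-preserving theory, e.g. \cite{MMZ,RZ1,WWZZ1}, we may take to preserve the orientation of $V_g$). I would like to imitate the device used in Proposition \ref{Pro of chgcons}: express a suitable quotient handlebody orbifold $\mathcal{H} \cong V_g/G$ as a product $\mathcal{F}\times[-1,1]$ with $\partial V_g = F\times[-1,1]$ and $V_g = F\times[-1,1]$, so that the involution $Id_F\times(-Id_{[-1,1]})$ reverses the orientation of $V_g$, commutes with the $G$-action, and is not itself in $G$ (since $G$ acts preserving orientation); then $\langle G, Id_F\times(-Id_{[-1,1]})\rangle$ is abelian of order $2|G| = 2AH_g$, giving $AH_g^- \ge 2AH_g$. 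Here I would invoke the structure of orientation-preserving abelian handlebody orbifolds: one needs that the maximal action can be arranged so the quotient orbifold is a product of a $2$-orbifold with an interval, i.e. the generator achieving the bound can be chosen with a product structure transverse to a spanning surface. If that product normalization is not automatic, the alternative is to analyze the finite graph of finite groups of $V_g/G$ directly and build the reflecting involution edge-by-edge on a discal decomposition.

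For the upper bound, suppose $\langle h_0, K\rangle = G^-$ is abelian, acting on $V_g$, with $h_0$ (chosen of order a power of $2$, by replacing it by an odd power times the appropriate $2$-part) reversing the orientation of $V_g$ and $K = $ the orientation-preserving subgroup, of index $2$. Then $K$ acts on $V_g$ preserving orientation, so $|K| \le AH_g$, hence $|G^-| = 2|K| \le 2AH_g$. The only subtlety is producing an orientation-reversing element; but $G^-$ contains such an element by hypothesis, and the orientation-preserving elements form a subgroup of index exactly $2$, so $|G^-| \le 2 AH_g$ follows immediately. Thus this direction is essentially formal once we know the orientation character $G^- \to \mathbb{Z}_2$ is surjective with kernel $K$ acting orientation-preservingly on $V_g$.

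The main obstacle I anticipate is the lower bound: it is not a priori clear that the extremal orientation-preserving abelian action on $V_g$ admits an equivariant ``collar reflection'' — that is, that $V_g/G$ can be chosen to be $\mathcal{F}\times[-1,1]$ with $G$ acting trivially on the interval factor. For cyclic groups this was handled explicitly in Proposition \ref{Pro of chgcons} by building the orbifold by hand; for abelian $G$ achieving the value $2g+2$ (or $16$ when $g=5$) one should exhibit the extremal action concretely, and Example \ref{Ex of cage} (the Cage) together with Example \ref{Ex of Square} already furnishes abelian actions with an evident reflecting involution of the ambient $S^3$ or $E^3/T_r'$ restricting to $V_g$, of order exactly $2AH_g$. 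So in practice I would finish the lower bound by checking that the specific extremal examples carry an orientation-reversing symmetry of the handlebody that commutes with the given abelian group, doubling the order; combined with the formal upper bound this yields $AH_g^- = 2AH_g$.
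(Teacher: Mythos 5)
Your proposal is correct and, once you drop the unnecessary $\mathcal{F}\times[-1,1]$ collar-reflection detour, it coincides with the paper's own proof: the bound $AH_g^-\leq 2AH_g$ is formal because the orientation-preserving elements form an abelian subgroup of index two, and $AH_g^-\geq 2AH_g$ follows from the explicit abelian actions of order $4g+4$ in item (6) of Example \ref{Ex of cage} (for $g\neq 5$) and of order $32$ in Example \ref{Ex of Square} (for $g=5$), each of which preserves the handlebody and contains an element reversing its orientation. Nothing further is needed.
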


\begin{proof}
Clearly $AH_g^-\leq 2AH_g$. By (6) in Example \ref{Ex of cage} and Example \ref{Ex of Square}, we have $AH_g^-\geq 2AH_g$.
\end{proof}

\begin{proposition}\label{Pro of ag}
$A_g^-=AH_g^-$.
\end{proposition}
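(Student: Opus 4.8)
The plan is to prove $A_g^-\ge AH_g^-$ and $A_g^-\le AH_g^-$ separately; together with Proposition \ref{Pro of ahg} (which gives $AH_g^-=2AH_g$) this yields the proposition. For the first inequality, take an abelian group $G$ acting on $V_g$ of order $AH_g^-$ with an element $h$ reversing the orientation of $V_g$, and restrict the action to $\Sigma_g=\partial V_g$. This restriction is faithful: a finite-order homeomorphism of $V_g$ that is the identity on $\partial V_g$ is the identity (average an invariant metric; its fixed-point set is totally geodesic and contains a collar of $\partial V_g$, hence is all of $V_g$). Since $h$ preserves the outward normal direction along $\partial V_g$, the map $h|_{\Sigma_g}$ reverses the orientation of $\Sigma_g$, so $A_g^-\ge|G|=AH_g^-$. (One could instead restrict the concrete actions of Examples \ref{Ex of cage}(6) and \ref{Ex of Square}.)

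For $A_g^-\le AH_g^-$, let $G$ be abelian acting on $\Sigma_g$ with some $h\in G$ reversing orientation, and let $G^+\le G$ be the index-two subgroup of orientation-preserving elements. I claim the $G^+$-action on $\Sigma_g$ extends to a handlebody $V_g$ with $\partial V_g=\Sigma_g$; granting this, $|G^+|\le AH_g$, hence $|G|\le 2AH_g=AH_g^-$. As $G^+$ preserves orientation, $\Sigma_g/G^+$ is orientable and Lemma \ref{Lem of extend to Hand} applies, so it suffices to verify its condition (c): the generators $x_1,\dots,x_\alpha$ corresponding to the cone points of index $>2$ can be partitioned into pairs $x_s,x_t$ with $\phi(x_s)=\phi(x_t)^{-1}$, where $\phi\colon\pi_1(\Sigma_g/G^+)\to G^+$ realizes the action.

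I would obtain this partition from the involution $\bar h$ induced by $h$ on the orientable orbifold $\mathcal{O}=\Sigma_g/G^+$. Since $h$ reverses the orientation of $\Sigma_g$ while $G^+$ preserves it, $\bar h$ reverses the orientation of $\mathcal{O}$, and $\bar h^2$ is induced by $h^2\in G^+$, so $\bar h$ is an orientation-reversing involution of $\mathcal{O}$. First I would show that $\bar h$ fixes no cone point of index $\ge 3$: if $\bar h(p)=p$ with index $n$, then the (abelian) stabilizer in $G$ of a preimage of $p$ in $\Sigma_g$ contains both a rotation of order $n$ and an orientation-reversing element, and conjugating the former by the latter inverts it, forcing $n\le 2$. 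Hence the cone points of index $\ge 3$ split into $\bar h$-orbits $\{p_s,p_t\}$ of size two with $n_s=n_t$, and these are the desired pairs: because $\bar h$ reverses orientation it carries a positively oriented loop around $p_s$ to a conjugate of the inverse of a positively oriented loop around $p_t$, so $\bar h_\ast(x_s)$ is conjugate to $x_t^{-1}$; and since $G$ is abelian, conjugation by a lift of $h$ is trivial on $G^+$, so $\phi(x_t)^{-1}=\phi(\bar h_\ast x_s)=\phi(x_s)$. This verifies (c), completing the argument.

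The step I expect to require the most care is this last one: one must match the presentation generators $x_j$ of Lemma \ref{Lem of extend to Hand} with the geometric rotations at the cone points, and check that the identity $\phi\circ\bar h_\ast=\phi$ on $\pi_1(\mathcal{O})$ — which is exactly where commutativity of $G$ enters — genuinely forces $\phi(x_s)=\phi(x_t)^{-1}$, rather than merely that $\phi(x_s)$ and $\phi(x_t)$ generate the same cyclic subgroup of $G^+$.
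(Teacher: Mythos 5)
Your proposal is correct and follows essentially the same route as the paper: the paper also reduces to showing that the orientation-preserving subgroup $G_o=G^+$ extends to a handlebody via Lemma \ref{Lem of extend to Hand}, pairing the cone points of index $>2$ by the orientation-reversing involution on $\Sigma_g/G_o$ (the deck transformation $\tau$ of the double cover $\Sigma_g/G_o\to\Sigma_g/G$, which is your $\bar h$) and using commutativity of $G$ to get $\phi(x_s)=\phi(x_t)^{-1}$, then concluding $|G|=2|G_o|\le 2AH_g=AH_g^-$. Your additional verifications (that $\bar h$ fixes no cone point of index $\ge 3$, and that restriction to $\partial V_g$ is faithful and orientation-reversing for the easy inequality) are details the paper leaves implicit, not a different method.
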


\begin{proof}
Let $G$ be an abelian group acting on $\Sigma_g$. $\Sigma_g/G$ is not orientable. Let $G_o$ be the subgroup of $G$ containing all elements which preserve the orientation of $\Sigma_g$. Then we have a 2-sheet cover $\Sigma_g/G_o\rightarrow\Sigma_g/G$, and the covering transformation $\tau$ reverses the orientation of $\Sigma_g/G_o$.

Let $\{p_1,\cdots, p_{\alpha}, q_1, \cdots, q_{\beta}\}$ be the set of singular points of $\Sigma_g/G_o$. $p_j (1\leq j\leq \alpha)$ has index bigger than $2$, and $q_k (1\leq k\leq \beta)$ has index $2$. Then $p_1, \cdots, p_{\alpha}$ can be partitioned into pairs by $\tau$. Since $G$ is abelian and $\tau$ reverses the orientation of $\Sigma_g/G_o$, their corresponding generators $x_1, \cdots, x_{\alpha}$ in $\Sigma_g/G_o$ can be partitioned into pairs satisfying the condition of Lemma \ref{Lem of extend to Hand}. Then $G_o$ can extend to a handlebody. Hence $|G_o|\leq AH_g$, and $A_g^-\leq AH_g^-$. Then $A_g^-=AH_g^-$.
\end{proof}

By Proposition \ref{Pro of chg}--\ref{Pro of ag} and Remark \ref{Rem of classical result}, we finished the proof of Proposition \ref{Pro of ORhandlebody}, hence finished the proof of Theorem \ref{Thm of CandA on handlebody}.

\section{Maximum orders of extendable actions on surfaces}\label{Sec of maxOrdSur}

In this section we will determine the maximum orders of extendable cyclic and abelian group actions on orientable closed surfaces. For an extendable action on $\Sigma_g$, we will identify $\Sigma_g$ and its image $e(\Sigma_g)$ in $S^3$. Firstly we will prove two lemmas.

\begin{lemma}\label{Lem of Fix circle}
Let $\langle h\rangle$ be a non-trivial cyclic group acting faithfully on $S^3$. $h$ preserves the orientation of $S^3$ and has a fixed point. Then the set of singular points in $S^3/\langle h\rangle$ is a $S^1$, with index $|\langle h\rangle|$, and $\pi_1(|S^3/\langle h\rangle|)$ is trivial.
\end{lemma}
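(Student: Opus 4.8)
The plan is to analyze the orbifold $S^3/\langle h\rangle$ directly using the structure theory already assembled in \S\ref{Sec of preliminaries}, together with the hypothesis that $h$ has a fixed point. Since $h$ preserves the orientation of $S^3$, the quotient $S^3/\langle h\rangle$ is an orientable spherical $3$-orbifold, so Lemma \ref{Lem of 3SphCyc} applies: the set $\Theta$ of singular points of index $|\langle h\rangle|$ is either $\emptyset$ or a circle $S^1$. The first task is to rule out $\Theta=\emptyset$. This is exactly where the fixed-point hypothesis enters: if $x'\in S^3$ is fixed by $h$, then the whole cyclic group $\langle h\rangle$ stabilizes $x'$, so the image point $x\in S^3/\langle h\rangle$ has local group $G_x\cong\langle h\rangle$, i.e. index $|\langle h\rangle|$, and hence lies in $\Theta$. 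Therefore $\Theta\neq\emptyset$, and by Lemma \ref{Lem of 3SphCyc} we conclude $\Theta\cong S^1$, with index $|\langle h\rangle|$. (One should note here that the singular locus of $S^3/\langle h\rangle$ could a priori contain points of index properly dividing $|\langle h\rangle|$ as well; but for a cyclic group acting on $S^3$ the local groups are cyclic subgroups of $\langle h\rangle$, and by Lemma \ref{Lem of loc3orb} the singular set away from the top-index points would contribute lower-dimensional or isolated strata — in any case the statement only asserts that the index-$|\langle h\rangle|$ points form a circle, which is what Lemma \ref{Lem of 3SphCyc} gives directly.)

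The remaining task is to show $\pi_1(|S^3/\langle h\rangle|)$ is trivial, i.e. the underlying topological space is simply connected. For this I would combine two facts. First, $S^3$ is simply connected, so $\pi_1(S^3/\langle h\rangle)\cong\langle h\rangle$ as orbifold fundamental groups (by the discussion of covering spaces of orbifolds in \S\ref{Subsec of OT}). Second, the underlying space $|S^3/\langle h\rangle|$ is obtained from the orbifold by forgetting the local groups, and killing all local groups corresponds on the level of $\pi_1$ to quotienting out by the normal subgroup generated by the peripheral elements of the singular strata. Concretely, the meridian loop $\mu$ around the singular circle $\Theta$ is mapped by the holonomy to a generator of the local group $\langle h\rangle$ — so $\mu$ normally generates all of $\pi_1(S^3/\langle h\rangle)\cong\langle h\rangle$ — and $\mu$ becomes null-homotopic in $|S^3/\langle h\rangle|$. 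Hence $\pi_1(|S^3/\langle h\rangle|)$ is the quotient of $\pi_1(S^3/\langle h\rangle)$ by the normal closure of $\mu$, which is everything, so $\pi_1(|S^3/\langle h\rangle|)=1$.

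I expect the main obstacle to be making the last step rigorous: one needs a clean statement that $|S^3/\langle h\rangle|$ is obtained from the orbifold by the ``resolution''/forgetting-local-groups procedure and that on $\pi_1$ this amounts to killing the meridians of the singular circle. An alternative, perhaps cleaner, route that avoids this: observe directly that $|S^3/\langle h\rangle|$ is a closed orientable $3$-manifold. Indeed, along the singular circle $\Theta$ the local model is $E^3/\langle r_n\rangle$ with $n=|\langle h\rangle|$ (Lemma \ref{Lem of loc3orb}), whose underlying space is again $\mathbb R^3$ — a rotation quotient of $\mathbb R^2$ is a cone, which is homeomorphic to $\mathbb R^2$ — so $|S^3/\langle h\rangle|$ has no manifold singularities; being a quotient of $S^3$ it is closed and orientable. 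Then $S^3\to |S^3/\langle h\rangle|$ is a branched cover of a closed orientable $3$-manifold, branched over a knot $\Theta$, with $S^3$ as total space. Removing $\Theta$ and its preimage gives an honest (unbranched, possibly irregular on the nose but here cyclic) cover $S^3\setminus h^{-1}(\Theta)\to |S^3/\langle h\rangle|\setminus\Theta$, and a Mayer–Vietoris / van Kampen argument gluing back the solid-torus neighborhood of $\Theta$ shows that the meridian of $\Theta$ normally generates $\pi_1(|S^3/\langle h\rangle|\setminus\Theta)$ with quotient trivial, whence $\pi_1(|S^3/\langle h\rangle|)=1$. Either way the heart of the matter is the bookkeeping of peripheral structure around the branch circle; the topological input (simple connectivity of $S^3$, the local model of a rotation quotient) is routine.
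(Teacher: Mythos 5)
Your treatment of the second assertion (triviality of $\pi_1(|S^3/\langle h\rangle|)$) is fine: it is essentially the expansion of the paper's one-line justification, and the meridian argument works because the fixed point of $h$ already gives a point of index $|\langle h\rangle|$ on the singular circle. The gap is in the first assertion. The lemma claims that the \emph{entire} singular locus of $S^3/\langle h\rangle$ is a single circle on which the index is $|\langle h\rangle|$; in particular there are no singular points whose index is a proper divisor of $|\langle h\rangle|$. Your parenthetical explicitly declines to rule such points out, on the grounds that the statement ``only asserts that the index-$|\langle h\rangle|$ points form a circle''; that weaker reading is neither what the lemma says nor what the paper needs. In Proposition \ref{Pro of cepm} the lemma is used to force $l=m=n=|\langle h^2\rangle|$ in class (A), $m=n=|\langle h^2\rangle|$ in class (B), and to exclude class (D) (whose singular set cannot sit inside one circle); in Proposition \ref{Pro of aeeven} the Riemann--Hurwitz count assumes every singular point of $\Sigma_g/G_o$ has index $|G_o|$. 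All of this requires the absence of lower-index singular points. Your remark that lower-index points ``would contribute lower-dimensional or isolated strata'' is also off the mark: since the action preserves orientation, the local groups are rotation groups, so lower-index points would form additional one-dimensional strata (further circles), which is exactly what has to be excluded.

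The missing step is where the fixed-point hypothesis does its real work, and it is short (this is the paper's argument): for every proper divisor $k$ of $|\langle h\rangle|$ the subgroup $\langle h^k\rangle$ is non-trivial and its fixed-point set contains the fixed point of $h$, hence is non-empty and, by Lemma \ref{Lem of 3SphCyc}, is a circle; likewise $Fix(\langle h\rangle)$ is a circle, and $Fix(\langle h\rangle)\subseteq Fix(\langle h^k\rangle)$. A circle cannot be a proper subset of a circle, so $Fix(\langle h^k\rangle)=Fix(\langle h\rangle)$ for every such $k$. Consequently any point of $S^3$ with non-trivial stabilizer is fixed by all of $\langle h\rangle$, so the singular set of $S^3/\langle h\rangle$ is exactly the image of $Fix(\langle h\rangle)$, a circle of index $|\langle h\rangle|$. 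With this inserted, your proof is complete and coincides with the paper's.
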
 

\begin{proof}
Let $k$ be a factor of $|\langle h\rangle|$ and $k\neq |\langle h\rangle|$. By Lemma \ref{Lem of 3SphCyc}, the set of fixed points of $\langle h^k\rangle$ is a $S^1$. Hence it belongs to the set of fixed points of $\langle h\rangle$. Namely the set of singular points in $S^3/\langle h\rangle$ is a $S^1$, with index $|\langle h\rangle|$. Since $\pi_1(S^3/\langle h\rangle)\cong\langle h\rangle$, $\pi_1(|S^3/\langle h\rangle|)$ must be trivial.
\end{proof}

\begin{lemma}\label{Lem of homology}
Let $S_H^3$ be an integer homology 3-sphere, and $S_g$ be a genus $g\geq 0$ orientable closed subsurface in $S_H^3$. Then $S_g$ splits $S_H^3$ into two compact manifolds $M$ and $N$, each has the same integer homology groups as $V_g$.
\end{lemma}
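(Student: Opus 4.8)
The plan is to use a standard Mayer--Vietoris argument together with Alexander duality, exactly as in the classical proof that a closed surface separates $S^3$. First I would show that $S_g$ separates $S_H^3$. Since $S_H^3$ is an integer homology sphere, $H_1(S_H^3;\mathbb{Z}/2)=0$, and a non-separating surface would give a nonzero class in $H_2(S_H^3;\mathbb{Z}/2)$ dual to a $\mathbb{Z}/2$-cohomology class detecting intersection number with $S_g$; more directly, the $\mathbb{Z}/2$ intersection form pairs $[S_g]$ with the class of a transverse loop, so a non-separating $S_g$ forces $H_1(S_H^3;\mathbb{Z}/2)\neq 0$, a contradiction. Hence $S_H^3\setminus S_g$ has exactly two components; call their closures $M$ and $N$, so that $M\cup N=S_H^3$ and $M\cap N=S_g$.

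Next I would run the Mayer--Vietoris sequence for the decomposition $S_H^3=M\cup_{S_g} N$ (after thickening $S_g$ to a collar so the pieces are open, or equivalently using the cofibration version):
\begin{equation*}
\cdots\to H_i(S_g)\to H_i(M)\oplus H_i(N)\to H_i(S_H^3)\to H_{i-1}(S_g)\to\cdots
\end{equation*}
with $\mathbb{Z}$ coefficients. Since $H_i(S_H^3)=H_i(S^3)$, the sequence breaks into short exact pieces that already pin down a lot: from $i=3$ one gets that $H_3(M)=H_3(N)=0$ and the map $H_2(S_g)=\mathbb{Z}\to H_2(M)\oplus H_2(N)$ is injective with cokernel mapping isomorphically onto nothing past degree $2$; from the low-degree end, $M$ and $N$ are connected (each is a manifold with connected boundary $S_g$ sitting in a connected space) so $H_0(M)=H_0(N)=\mathbb{Z}$. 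The content to extract is $H_1(M)\oplus H_1(N)=0$ and $H_2(M)\oplus H_2(N)\cong\mathbb{Z}$, hence each of $H_1(M),H_1(N)$ vanishes and exactly one of $H_2(M),H_2(N)$ is $\mathbb{Z}$ and the other $0$ — but this asymmetry is an illusion once we bring in duality.

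To fix the degree-two groups I would invoke Lefschetz duality for the compact manifold $M$ with boundary $S_g$: $H_2(M;\mathbb{Z})\cong H^1(M,\partial M;\mathbb{Z})$, and the long exact sequence of the pair $(M,\partial M)$ together with $H_1(M)=0$, $H^0(S_g)=\mathbb{Z}$, $H^1(S_g)=\mathbb{Z}^{2g}$, $H^1(M)=0$ forces $H^1(M,\partial M)\cong\mathbb{Z}^g$; the same for $N$. Thus $H_2(M)\cong H_2(N)\cong\mathbb{Z}^g$ and $H_1(M)=H_1(N)=0$, which are exactly the integer homology groups of the handlebody $V_g$; higher homology vanishes since $M,N$ are open subsets of $S_H^3$ up to homotopy, or directly from Lefschetz duality in degrees $\geq 3$. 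Assembling $H_0=\mathbb{Z}$, $H_1=0$, $H_2=\mathbb{Z}^g$, $H_i=0$ for $i\geq 3$ completes the claim.

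The main obstacle I anticipate is not any single computation but making the duality bookkeeping consistent: one must be careful that $M$ and $N$ are genuinely compact manifolds with boundary (use a bicollar of $S_g$, push $S_g$ slightly into each side), and that Lefschetz duality is applied with the correct variance so that the two sides come out symmetric rather than the spurious $\mathbb{Z}$-vs-$0$ split suggested by Mayer--Vietoris alone. Once the collar setup is fixed, each step is routine; the only place requiring genuine input is the separation step, and there the homology-sphere hypothesis is exactly what is needed and sufficient.
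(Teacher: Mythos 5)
There is a genuine error in your Mayer--Vietoris bookkeeping, and it propagates to a false final answer. From the exact segment $H_2(S_H^3)\to H_1(S_g)\to H_1(M)\oplus H_1(N)\to H_1(S_H^3)$ with $H_1(S_H^3)=H_2(S_H^3)=0$, exactness forces $H_1(M)\oplus H_1(N)\cong H_1(S_g)\cong\mathbb{Z}^{2g}$, not $0$. In degree two, the class $[S_g]$ bounds on each side (it is $\partial$ of the relative fundamental class of $(M,\partial M)$, and likewise for $N$), so it maps to zero in $H_2(M)\oplus H_2(N)$; since the Mayer--Vietoris map $H_2(S_g)\to H_2(M)\oplus H_2(N)$ is surjective (because $H_2(S_H^3)=0$), this gives $H_2(M)\oplus H_2(N)=0$, not $\mathbb{Z}$. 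Worse, your assembled answer $H_1=0$, $H_2=\mathbb{Z}^g$ is not the homology of a handlebody: $V_g$ deformation retracts to a wedge of $g$ circles, so $H_1(V_g)\cong\mathbb{Z}^g$ and $H_2(V_g)=0$. So even if the duality step were executed consistently, it would be aimed at the wrong target, and in fact it is internally inconsistent: with $H^1(M)=0$ the pair sequence gives $H^1(M,\partial M)=0$, not $\mathbb{Z}^g$.

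The idea your proposal is missing is how to split the $\mathbb{Z}^{2g}$ evenly between the two sides. The paper's route is: $H_2(M)=H_2(N)=0$ and $H_1(M)\oplus H_1(N)\cong\mathbb{Z}^{2g}$ from Mayer--Vietoris, and then the ``half lives, half dies'' theorem (the kernel of $H_1(\partial M;\mathbb{Q})\to H_1(M;\mathbb{Q})$ has rank $g$) to conclude $H_1(M)\cong H_1(N)\cong\oplus_{i=1}^{g}\mathbb{Z}$. Your instinct to use Poincar\'e--Lefschetz duality is not wrong in principle---duality plus the long exact sequence of $(M,\partial M)$ is precisely how half-lives-half-dies is proved---but as written your argument starts from an incorrect vanishing pattern and applies duality in the wrong degrees, so it does not go through. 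Your separation step (the mod $2$ intersection pairing, equivalently a surjection of $H_1(S_H^3)$ onto $\mathbb{Z}$ for a non-separating surface) is fine and matches the paper's.
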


\begin{proof}
Since $S_H^3$ is orientable, $S_g$ is two sided in $S_H^3$. If it does not split $S_H^3$, then there is a surjective homomorphism from $H_1(S_H^3)$ to $\mathbb{Z}$, which is a contradiction. Then we have the following MV-sequence:

\centerline{\xymatrix{H_3(S_H^3) \ar[r] & H_2(S_g) \ar[r] & H_2(M)\oplus H_2(N) \ar[r] & H_2(S_H^3)}}

\centerline{\xymatrix{H_2(S_H^3) \ar[r] & H_1(S_g) \ar[r] & H_1(M)\oplus H_1(N) \ar[r] & H_1(S_H^3)}}

Clearly we have $H_i(M)=H_i(N)=0$ for $i>2$. By the first exact sequence, $H_2(M)=H_2(N)=0$. By the second exact sequence, $H_1(M)\oplus H_1(N)\cong \oplus_{i=1}^{2g}\mathbb{Z}$.
Then by the ``half live half die'' theorem, $H_1(M)\cong H_1(N)\cong \oplus_{i=1}^{g}\mathbb{Z}$.
\end{proof}

\subsection{Maximum orders of cyclic extendable actions on surfaces}

\begin{proposition}\label{Pro of cemp}
$CE_g(-,+)=C_g^-$.
\end{proposition}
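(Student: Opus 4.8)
The plan is to prove the two inequalities $CE_g(-,+)\le C_g^-$ and $CE_g(-,+)\ge C_g^-$ separately. The easy direction is $CE_g(-,+)\le C_g^-$: if $\langle h\rangle$ is a type $(-,+)$ extendable cyclic action on $\Sigma_g$, then by definition some power of $h$ (in fact $h$ itself or a suitable element) reverses the orientation of $\Sigma_g$, so $\langle h\rangle$ is a cyclic action on $\Sigma_g$ containing an orientation-reversing element, whence $|\langle h\rangle|\le C_g^-$. I would state this in one line.

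For the reverse inequality $CE_g(-,+)\ge C_g^-$, the strategy is to take a maximal-order cyclic action $\langle h\rangle$ on $\Sigma_g$ with $h$ reversing the orientation of $\Sigma_g$ (so $|\langle h\rangle|=C_g^-$), and produce an embedding $e\colon\Sigma_g\hookrightarrow S^3$ over which this action extends with $G$ preserving the orientation of $S^3$. By Remark \ref{Rem of classical result}(2), $C_g^-=4g+4$ for even $g$ and $4g-4$ for odd $g$, and these extremal orientation-reversing periodic maps are exactly the ones realized by the ``Cage'' action $\langle\tau_g\rangle$ of Example \ref{Ex of cage}(1) when $g$ is even and the ``Wheel'' action $\langle\varphi_g\rangle$ of Example \ref{Ex of wheel} when $g$ is odd. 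Both of these are, by construction, type $(-,+)$ extendable actions of that very order. So the substance of the argument is the uniqueness of the maximal orientation-reversing periodic map up to conjugacy in the mapping class group (equivalently, up to topological conjugacy of the action): since any two cyclic actions on $\Sigma_g$ of order $C_g^-$ with an orientation-reversing generator are conjugate — the extremal order determines the signature of the quotient orbifold, and the relevant datum is unique — the Cage/Wheel model is the only such action up to equivalence, and hence every maximal $C_g^-$ action is extendable of type $(-,+)$.

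The key steps, in order, are: (1) record $CE_g(-,+)\le C_g^-$ from the definitions; (2) recall from \cite{Wa} the value of $C_g^-$ and, more importantly, that the extremal orientation-reversing periodic map on $\Sigma_g$ is unique up to conjugacy, with its quotient orbifold data determined by $g$; (3) for even $g$, identify this extremal action with $\langle\tau_g\rangle$ from Example \ref{Ex of cage}, which is type $(-,+)$ extendable of order $4g+4$; (4) for odd $g$, identify it with $\langle\varphi_g\rangle$ from Example \ref{Ex of wheel}, which is type $(-,+)$ extendable of order $4g-4$; (5) conclude $CE_g(-,+)\ge C_g^-$, hence equality.

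The main obstacle is step (2)–(4): pinning down that the extremal orientation-reversing action is genuinely unique (so that the explicit Cage/Wheel model captures \emph{every} maximal action, not just \emph{some} action of that order) and matching the abstract quotient-orbifold invariants of the extremal periodic map against those of $\langle\tau_g\rangle$ and $\langle\varphi_g\rangle$. If uniqueness is not available off the shelf in the sharp form needed, the fallback is softer: we only need \emph{one} action of order $C_g^-$ to be type $(-,+)$ extendable to get the lower bound $CE_g(-,+)\ge C_g^-$, and Examples \ref{Ex of cage}(1) and \ref{Ex of wheel} supply exactly that, since they realize orientation-reversing periodic maps of orders $4g+4$ (even $g$) and $4g-4$ (odd $g$) on $\Sigma_g$ that are by construction extendable over $S^3$ with orientation-preserving $S^3$-action. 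Combined with step (1), this already forces equality, so the uniqueness discussion can be kept to the minimum needed to cite \cite{Wa} for the numerical value of $C_g^-$.
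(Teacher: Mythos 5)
Your proposal is correct and, once you drop the unnecessary uniqueness-of-the-extremal-action detour, it is exactly the paper's argument: the inequality $CE_g(-,+)\le C_g^-$ is immediate from the definitions, and the lower bound comes from citing Example \ref{Ex of cage}(1) (even $g$, order $4g+4$) and Example \ref{Ex of wheel} (odd $g$, order $4g-4$) together with the value of $C_g^-$ from Remark \ref{Rem of classical result}(2). Your own ``fallback'' paragraph already observes that one extendable action of order $C_g^-$ suffices, which is precisely how the paper proves it.
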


\begin{proof}
Clearly $CE_g(-,+)\leq C_g^-$. By (1) in Example \ref{Ex of cage} and Example \ref{Ex of wheel}, we have $CE_g(-,+)=C_g^-$.
\end{proof}

\begin{proposition}\label{Pro of cepm}
$CE_g(+,-)=2g+2$.
\end{proposition}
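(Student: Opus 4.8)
The lower bound $CE_g(+,-)\geq 2g+2$ is already supplied by part (3) of Example \ref{Ex of cage}, where $\langle\tau_g\sigma\rangle$ is exhibited as a type $(+,-)$ extendable cyclic action of order $2g+2$ for every $g>1$; so the work is to prove the upper bound $CE_g(+,-)\leq 2g+2$. The plan is as follows. Let $\langle h\rangle$ be a type $(+,-)$ extendable cyclic group acting on $\Sigma_g\subset S^3$, so $h$ preserves the orientation of $\Sigma_g$ and some power of $h$ reverses the orientation of $S^3$; set $n=|\langle h\rangle|$. Let $\langle h\rangle_+$ be the index-at-most-$2$ subgroup of elements preserving the orientation of $S^3$; since there is an orientation-reversing element, $[\langle h\rangle:\langle h\rangle_+]=2$, and by Lemma \ref{Lem of change sides} every element of $\langle h\rangle_+$ preserves both orientations and does not switch sides of $\Sigma_g$, while the orientation-reversing generator switches the two sides. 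In particular $\langle h\rangle_+$ preserves each of the two complementary handlebodies $V, V'$ of the Heegaard splitting $\Sigma_g=\partial V$, acting on each; so $|\langle h\rangle_+|=n/2\leq CH_g\leq 2g+2$ by Remark \ref{Rem of classical result}. This already gives $n\leq 4g+4$, which is not yet sharp, so the main effort is to improve the bound on $n/2$ by a factor of $2$ using the extension.

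The key step I expect to carry the argument is an analysis of the quotient orbifold. Because the $\langle h\rangle$-action extends to $S^3$, the handlebody action of $\langle h\rangle_+$ on $V$ extends across $V$ inside $S^3$, and more is true: the full group $\langle h\rangle$ acts on $S^3$ with $S^3/\langle h\rangle$ a spherical orbifold whose restriction to $V/\langle h\rangle_+$ is a handlebody orbifold. I would first note that since $h^2$ (a generator of $\langle h\rangle_+$) preserves the orientation of $S^3$ and $|\langle h\rangle_+|>g-1$ in the extremal case, Lemma \ref{Lem of hand classify} forces $V/\langle h\rangle_+$ into one of the classes (A)--(D), and I would run the fixed-point bookkeeping exactly as in the proof of Proposition \ref{Pro of chg}: the orientation-reversing involution $\overline{h}$ acts on $V/\langle h\rangle_+$, it has a fixed point by Lemma \ref{Lem of fixed point} (noting $\chi(V)=1-g$ and tracking parity via Euler characteristics of the relevant pieces), and Lemma \ref{Lem of odd order} then constrains the index of that fixed point. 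The extra input over Proposition \ref{Pro of chg} is that the fixed circle or fixed points of elements of $\langle h\rangle_+$ sit inside $S^3$, and by Lemma \ref{Lem of Fix circle} the singular set of the $S^3$-quotient is a single unknotted $S^1$ with $\pi_1$ of the underlying space trivial; combining the handlebody decomposition data of Lemma \ref{Lem of Hanorb}, Lemma \ref{Lem of Euler char equ}, and Lemma \ref{Lem of handcover} with the finitely-injective-surjection constraints of Lemma \ref{Lem of Tocyclic} should force $V/\langle h\rangle_+$ to be of the most restrictive type, yielding $|\langle h\rangle_+|\leq g+1$ and hence $n\leq 2g+2$.

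The main obstacle, as I see it, is ruling out the intermediate cases cleanly: a priori $V/\langle h\rangle_+$ could be in class (A) (giving $|\langle h\rangle_+|$ as large as $12$ for small $g$, or $2k$ with $g=2k$ in the $\{2,2,2k\}$ subcase, which is only $g+1$-ish — fine) or in class (B)/(D) where $|\langle h\rangle_+|=[m,n]$ can exceed $g+1$ unless one uses the ambient $S^3$ to kill it. The leverage is that the orientation-reversing generator of $\langle h\rangle$ must conjugate the two handlebodies $V$ and $V'$ and hence induce an isomorphism of their induced graphs of groups compatible with the $S^3$-orbifold structure; this symmetry plus Lemma \ref{Lem of 3SphCyc} (the singular set upstairs being at most an $S^1$ for the orientation-preserving subgroup) should show that the number of singular edges, hence the combinatorics, is too small to allow $[m,n]>g+1$. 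I would also invoke Lemma \ref{Lem of homology} if needed to control how $\Sigma_g$ sits in $S^3/\langle h\rangle$ at the homological level. The remaining computations — extracting the genus $g$ from $\chi$ via Definition \ref{Def of Euler char} in each surviving case and checking $n\leq 2g+2$ with equality only for the configuration realized by $\tau_g\sigma$ — are routine.
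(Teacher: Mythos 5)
Your lower bound and the reduction to $\langle h^2\rangle$ (with $|\langle h^2\rangle|>g-1$ in the extremal case, so Lemma \ref{Lem of hand classify} applies) are fine, but the core of your upper-bound argument has genuine gaps. First, the ``fixed-point bookkeeping exactly as in Proposition \ref{Pro of chg}'' cannot be run here: by your own observation $h$ switches the two sides of $\Sigma_g$, so $\overline{h}$ does not act on $V/\langle h^2\rangle$ at all (it interchanges the quotients of the two sides); moreover Proposition \ref{Pro of chg} concerns a generator reversing the orientation of the handlebody, whereas in type $(+,-)$ the group preserves the orientation of $\Sigma_g$, so Lemmas \ref{Lem of fixed point} and \ref{Lem of odd order} play no analogous role. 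Second, you invoke Lemma \ref{Lem of Fix circle} without verifying its hypothesis that $h^2$ has a fixed point; a priori $\langle h^2\rangle$ could act freely on $S^3$ (a lens-space-type action), and then nothing constrains the indices. The missing idea, which is exactly what drives the paper's proof, is that $h$ reverses the orientation of $S^3$, so by Lemma \ref{Lem of 3SphCyc} its fixed-point set is $S^0$ or $S^2$, in particular nonempty; hence $h^2$ has a fixed point and Lemma \ref{Lem of Fix circle} shows the singular set of $S^3/\langle h^2\rangle$ is a single circle of index $|\langle h^2\rangle|$. Consequently every cone point of $\Sigma_g/\langle h^2\rangle$ has full index $|\langle h^2\rangle|$, and this is what eliminates the intermediate cases you worry about: the quotient handlebody orbifold must lie in classes (A)--(C) with all its nontrivial vertex groups equal to $\langle h^2\rangle$, so Lemma \ref{Lem of EulerNo} gives $(|\langle h\rangle|,g)=(4,2)$ in class (A), $|\langle h\rangle|=2n$, $g=n-1$ in class (B), and $|\langle h\rangle|=2n$, $g=n$ in class (C), whence $|\langle h\rangle|\le 2g+2$. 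Your proposed substitute --- that the side-swapping generator induces an isomorphism of the two graphs of groups and that this symmetry ``should'' make the combinatorics too small to allow $[m,n]>g+1$ --- is not an argument: the two sides are tautologically equivalent under $h$, and no bound on $[m,n]$ follows from that alone.

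A smaller but real gap: you treat the two complementary regions of $\Sigma_g$ as handlebodies, i.e.\ you assume $e$ is a Heegaard embedding, which is not given. The paper avoids this by noting that the $\langle h^2\rangle$-action extends to one complementary compact $3$-manifold and then applying Lemma \ref{Lem of extend to Hand} ((b) implies (a)) to obtain an extension to an abstract handlebody $V_g$, to which Lemma \ref{Lem of hand classify} and Lemma \ref{Lem of handcover} are then applied; your intermediate bound $n/2\le CH_g$ also needs this step to be legitimate.
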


\begin{proof}
Let $\langle h\rangle$ be a cyclic group acting on $\Sigma_g$. And the action is a type $(+,-)$ extendable action realizing the maximum order $CE_g(+,-)$. Then the action of $\langle h^2\rangle$ on $\Sigma_g$ can extend to some 3-manifold. Hence by Lemma \ref{Lem of extend to Hand}, it can extend to a handlebody $V_g$. By (3) in Example \ref{Ex of cage}, we have $|\langle h^2\rangle|>g-1$. Hence $V_g/\langle h^2\rangle$ is in the classes (A--D) in Lemma \ref{Lem of hand classify}.


Since $h$ reverses the orientation of $S^3$, it has a fixed point. Then $h^2$ has a fixed point. By Lemma \ref{Lem of Fix circle}, the set of singular points of $S^3/\langle h^2\rangle$ is a $S^1$, with index $|\langle h^2\rangle|$. Then $V_g/\langle h^2\rangle$ can only belong to classes (A--C). By Lemma \ref{Lem of EulerNo}, we have:

If $V_g/\langle h^2\rangle$ is in the class (A), then $|\langle h^2\rangle|=l=m=n=2$. Hence $|\langle h\rangle|=4$, and $g=2$.

If $V_g/\langle h^2\rangle$ is in the class (B), then $|\langle h^2\rangle|=m=n$. Hence $|\langle h\rangle|=2n$, $g=n-1$.

If $V_g/\langle h^2\rangle$ is in the class (C), then $|\langle h^2\rangle|=n$. Hence $|\langle h\rangle|=2n$, $g=n$.

Hence $|\langle h\rangle|\leq 2g+2$. Then by (3) in Example \ref{Ex of cage}, $CE_g(+,-)=2g+2$.
\end{proof}

\begin{proposition}\label{Pro of cemm}
$CE_g(-,-)=2g+1+(-1)^g$.
\end{proposition}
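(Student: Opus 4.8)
The plan is to establish the two inequalities separately. For the lower bound I would simply invoke parts (2) and (2$'$) of Example~\ref{Ex of cage}, which exhibit type $(-,-)$ extendable cyclic actions of order $2g+2$ for even $g$ and of order $2g$ for odd $g$, so that $CE_g(-,-)\ge 2g+1+(-1)^g$.

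For the upper bound, let $\langle h\rangle$ realize $CE_g(-,-)$, acting compatibly on $S^3$. Since the action is not of type $(+,+)$ and $\langle h\rangle$ is cyclic, $h$ must reverse both orientations while $\langle h^2\rangle$ preserves both. By Lemma~\ref{Lem of change sides}, $h$ does not interchange the two sides of $\Sigma_g$ in $S^3$, so $\langle h^2\rangle$ acts on one side $M$ with $\partial M=\Sigma_g$; as $\langle h^2\rangle$ is abelian and $\Sigma_g/\langle h^2\rangle$ is orientable, Lemma~\ref{Lem of extend to Hand} shows the $\langle h^2\rangle$-action on $\Sigma_g$ extends over a handlebody $V_g$ with $\partial V_g=\Sigma_g$. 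Put $N=|\langle h^2\rangle|$; the lower bound lets me assume $N\ge g>g-1$, so $V_g/\langle h^2\rangle$ lies in one of the classes (A)--(D) of Lemma~\ref{Lem of hand classify}. I would then argue as in the proof of Proposition~\ref{Pro of cepm}: $h$, hence $h^2$, has a fixed point on $S^3$, so by Lemma~\ref{Lem of Fix circle} the singular set of $S^3/\langle h^2\rangle$ is a circle of index $N$; therefore every singular point of $\Sigma_g/\langle h^2\rangle$ --- i.e.\ every boundary singular point of $V_g/\langle h^2\rangle$ --- has index $N$, which forces all vertex groups of its graph of groups to be $\mathbb{Z}_N$. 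Plugging this into Lemma~\ref{Lem of EulerNo}: class (A) forces $3/N>1$, hence $N=2$, $g=2$, $|\langle h\rangle|=4$; class (B) gives $g=N-1$, hence $|\langle h\rangle|=2g+2$ and $\Sigma_g/\langle h^2\rangle\cong S^2(N,N,N,N)$; classes (C) and (D) give $g=N$, hence $|\langle h\rangle|=2g$. Thus $|\langle h\rangle|\le 2g+2$ always, which with the lower bound settles the even case.

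The remaining task, and the real content, is to exclude class (B) when $g$ is odd, so that $N=g+1$ is even. Assuming it occurs, let $\overline h$ be the image of $h$ in $\langle h\rangle/\langle h^2\rangle\cong\mathbb{Z}_2$; it acts on $F:=\Sigma_g/\langle h^2\rangle\cong S^2(N,N,N,N)$ reversing orientation and permuting the four cone points. Since $h$ has order $2N>2$ and reverses the orientation of $\Sigma_g$, it fixes no point of $\Sigma_g$ (an orientation-reversing element of finite order fixing a surface point acts as a reflection there, hence has order $2$); because each cone point of $F$ has a unique preimage in $\Sigma_g$, $\overline h$ fixes no cone point. Hence $\Sigma_g/\langle h\rangle=F/\overline h$ is either $\mathbb{RP}^2$ with two cone points of order $N$, or a disc with mirror boundary and two interior cone points of order $N$, and in either case $\pi_1(\Sigma_g/\langle h\rangle)$ is generated by the two order-$N$ cone-point generators together with one further generator --- an element $a$ with $x_1x_2a^2=1$, or an involution $s$. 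The $\langle h\rangle$-action corresponds to a finitely injective surjection $\phi\colon\pi_1(\Sigma_g/\langle h\rangle)\to\mathbb{Z}_{2N}$; finite injectivity sends each cone-point generator to an element of order $N$, hence into the index-two subgroup $2\mathbb{Z}_{2N}$, and since $N$ is even the relation $x_1x_2a^2=1$ (resp.\ $s^2=1$) then forces the remaining generator into $2\mathbb{Z}_{2N}$ as well. So $\phi$ cannot be surjective --- a contradiction. Hence $|\langle h\rangle|\le 2g$ for odd $g$, giving $CE_g(-,-)=2g$.

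\textbf{The main obstacle} is exactly this last step. The reduction to classes (A)--(D) and the Euler-characteristic bookkeeping are routine (they parallel Proposition~\ref{Pro of cepm}), but the exclusion of class (B) for odd $g$ requires identifying the quotient $2$-orbifold $\Sigma_g/\langle h\rangle$ and a generating set for its orbifold fundamental group precisely enough to see the parity obstruction: when $N$ is even, every such generator is forced into the index-two subgroup of $\mathbb{Z}_{2N}$, which is incompatible with surjectivity.
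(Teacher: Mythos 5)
Your argument is correct, and its decisive step is genuinely different from the paper's. The reduction is the same in both: the lower bound from (2), (2$'$) of Example~\ref{Ex of cage}, extension of the $\langle h^2\rangle$-action to a handlebody, Lemma~\ref{Lem of hand classify}, and Lemma~\ref{Lem of Fix circle} forcing all singular indices to be $N=|\langle h^2\rangle|$, so that only class (B) can exceed order $2g$. Where the paper excludes class (B) for odd $g$ by a three-dimensional argument --- $|S^3/\langle h^2\rangle|$ is an integer homology sphere, Lemma~\ref{Lem of homology} plus removal of a neighbourhood of the singular circle produces a piece with the homology of $V_2$, whose odd Euler characteristic forces a fixed point of $\overline h$ (Lemma~\ref{Lem of fixed point}) and hence $N$ odd (Lemma~\ref{Lem of odd order}) --- you work entirely on the surface level: you identify the quotient $2$-orbifold $\Sigma_g/\langle h\rangle$ and show that when $N$ is even the associated finitely injective homomorphism $\phi\colon\pi_1^{orb}(\Sigma_g/\langle h\rangle)\to\mathbb{Z}_{2N}$ has image in the index-two subgroup, contradicting surjectivity. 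Your route is more elementary in spirit (a Riemann--Hurwitz/parity argument close to the classical $4g-4$ bound for orientation-reversing maps) and avoids the homology-sphere and Lefschetz machinery, but it imports classical $2$-dimensional facts not in the paper's lemma list: that an orientation-reversing finite-order diffeomorphism of a surface with a fixed point has order $2$, the classification of involutions of $S^2$, and presentations of non-orientable or mirrored $2$-orbifold groups; the paper's route stays inside its prepared toolkit.

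Three small points you should tighten, none fatal. First, in class (D) only the vertex whose singular axis meets $\partial(V_g/\langle h^2\rangle)$ is forced to be $\mathbb{Z}_N$; fortunately the Euler characteristic of (D) in Lemma~\ref{Lem of EulerNo} depends only on that vertex, so your count $|\langle h\rangle|=2g$ stands (in fact (D) cannot occur at all, since it would put both an interior singular circle and a singular arc inside the single singular circle of $S^3/\langle h^2\rangle$, which is how the paper discards it in Proposition~\ref{Pro of cepm}). Second, in the $\mathbb{RP}^2$ case the relation $x_1x_2a^2=1$ together with ``$\phi(x_i)$ even'' alone does not force $\phi(a)$ even; you need that an element of order $N$ in $\mathbb{Z}_{2N}$ with $N$ even is congruent to $2 \bmod 4$, so $\phi(x_1)+\phi(x_2)\equiv 0 \bmod 4$ and then $\phi(a)\equiv -(w_1+w_2)\bmod N$ is even --- spell this line out. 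Third, when $N$ is even no odd power of $h$ has a fixed point, so $\overline h$ acts freely on $|\Sigma_g/\langle h^2\rangle|$ and only the $\mathbb{RP}^2$ case actually arises; including the mirror case is harmless but unnecessary.
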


\begin{proof}
The proof is the same as above, except that if $V_g/\langle h^2\rangle$ is in the class (B).

In this case, $\Sigma_g/\langle h^2\rangle$ is a $S^2$ with four singular points. Let $\Theta$ be the set of singular points in $S^3/\langle h^2\rangle$. By Lemma \ref{Lem of Fix circle}, $\Theta$ is a $S^1$, and $S^3/\langle h^2\rangle$ is an integer homology 3-sphere. By Lemma \ref{Lem of homology}, $|\Sigma_g/\langle h^2\rangle|$ splits $|S^3/\langle h^2\rangle|$ into two compact manifolds $M$ and $N$. Let $\overline{h}$ be the image of $h$ in $\langle h\rangle/\langle h^2\rangle$, and $N(\Theta)$ be a $\overline{h}$-invariant regular neighbourhood of $\Theta$. Then $\overline{h}$ acts on $\overline{M-N(\Theta)}$. By Lemma \ref{Lem of homology}, $\overline{M-N(\Theta)}$ will have the same homology groups as $V_2$. Then by Lemma \ref{Lem of fixed point}, $\overline{h}$ has a fixed point in $\overline{M-N(\Theta)}$. By Lemma \ref{Lem of odd order}, we have that $|\langle h^2\rangle|$ is odd. Hence $g$ is even. Namely we have $CE_g(-,-)\leq 2g+1+(-1)^g$.

Then by (2) and (2$'$) in Example \ref{Ex of cage}, $CE_g(-,-)=2g+1+(-1)^g$.
\end{proof}

\begin{proposition}\label{Pro of cemix}
The type $(Mix)$ extendable cyclic group action does not exist.
\end{proposition}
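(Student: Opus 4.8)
The plan is to use the elementary fact that the orientation behaviour of a homeomorphism is multiplicative, so it assembles into a group homomorphism. Concretely, fix an extendable $G$-action on $\Sigma_g$ with $G=\langle h\rangle$, together with the corresponding $G$-action on $S^3$. I would consider the map
$$\epsilon\co G\longrightarrow \mathbb{Z}_2\oplus\mathbb{Z}_2,\qquad f\mapsto(\epsilon_\Sigma(f),\epsilon_S(f)),$$
where $\epsilon_\Sigma(f)=0$ or $1$ according as $f$ preserves or reverses the orientation of $\Sigma_g$, and $\epsilon_S(f)$ records the analogous data for the induced automorphism of $S^3$. Since the composite of two orientation-preserving (resp. orientation-reversing) maps behaves additively modulo $2$ in each coordinate, $\epsilon$ is a group homomorphism; this uses only that every $f\in G$ induces a well-defined automorphism of $S^3$, which is built into the notion of an extendable action.

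Next I would translate the $(Mix)$ hypothesis. If the action were of type $(Mix)$, then by Definition \ref{Def of five type} there are $h_1,h_2\in G$ with $\epsilon(h_1)=(0,1)$ and $\epsilon(h_2)=(1,0)$. Hence the image of $\epsilon$ contains the subgroup generated by $(0,1)$ and $(1,0)$, which is all of $\mathbb{Z}_2\oplus\mathbb{Z}_2$. But $G$ is cyclic, so $\epsilon(G)$ is a cyclic group, while $\mathbb{Z}_2\oplus\mathbb{Z}_2$ is not cyclic — a contradiction. Therefore no type $(Mix)$ extendable cyclic action exists.

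There is essentially no obstacle here: the only point requiring (trivial) care is that $\epsilon$ is well defined and multiplicative, which is immediate once a $G$-action on $S^3$ extending the given surface action has been fixed. It is worth contrasting this with the abelian situation, where $\mathbb{Z}_2\oplus\mathbb{Z}_2$ can appear as (a quotient of) $G$; this is precisely what allows Example \ref{Ex of fork} to produce a type $(Mix)$ abelian action of order $2g+4$, explaining the asymmetry between the two columns of Table \ref{tab of maxorder of fivetype} in the $(Mix)$ row.
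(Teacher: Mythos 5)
Your proof is correct and is essentially the paper's own argument: the paper simply asserts that a type $(Mix)$ action yields a surjective homomorphism $G\rightarrow\mathbb{Z}_2\oplus\mathbb{Z}_2$ and concludes $G$ cannot be cyclic, while you make that homomorphism (the pair of orientation characters on $\Sigma_g$ and $S^3$) and its surjectivity explicit. No substantive difference.
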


\begin{proof}
For a type $(Mix)$ extendable $G$-action, there is a surjective homeomorphism $G\rightarrow\mathbb{Z}_2\oplus \mathbb{Z}_2$. Hence $G$ can not be cyclic.
\end{proof}

\subsection{Maximum orders of abelian extendable actions on surfaces}

\begin{proposition}\label{Pro of aealmost}
$AE_g(+,-)=AE_g(-,+)=AE_g(-,-)=2AE_g$.
\end{proposition}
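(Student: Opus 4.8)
The plan is to prove the three equalities by showing each of $AE_g(+,-)$, $AE_g(-,+)$, $AE_g(-,-)$ is both $\le 2AE_g$ and $\ge 2AE_g$. The lower bounds are already in hand: items (4), (5), (6) of Example \ref{Ex of cage} exhibit type $(+,-)$, $(-,+)$, $(-,-)$ extendable abelian actions of order $4g+4 = 2AE_g$ (recall $AE_g = 2g+2$ from Remark \ref{Rem of classical result}), so it remains to prove the upper bound $AE_g\mathcal T \le 2AE_g$ for each of these three types. The key observation is that in each case the group $G$ contains an index-$2$ subgroup $G_0$ which is of type $(+,+)$, i.e. preserves both orientations; then $|G| = 2|G_0|$ and I want to conclude $|G_0| \le AE_g(+,+) = AE_g$.

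First I would isolate the right subgroup $G_0$. In type $(-,+)$ let $G_0$ be the elements preserving the orientation of $\Sigma_g$; in type $(+,-)$ let $G_0$ be the elements preserving the orientation of $S^3$; in type $(-,-)$ let $G_0$ be the elements preserving the orientation of $\Sigma_g$ (equivalently of $S^3$, since by hypothesis each element preserves both or reverses both). In every case the relevant orientation-behavior defines a homomorphism $G \to \mathbb Z_2$ which is surjective by the definition of the type, so $G_0$ has index $2$. Moreover the restricted $G_0$-action is extendable over $S^3$ via the same embedding $e$, and is of type $(+,+)$. Hence $|G_0| \le AE_g(+,+) = AE_g = 2g+2$ by Remark \ref{Rem of classical result}, and therefore $|G| = 2|G_0| \le 4g+4 = 2AE_g$. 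Combined with the lower bounds from Example \ref{Ex of cage}, this gives $AE_g(+,-) = AE_g(-,+) = AE_g(-,-) = 2AE_g$.

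The one point that needs care — and which I expect to be the main (though modest) obstacle — is checking that the three types $(+,-)$, $(-,+)$, $(-,-)$ each really do force the existence of a genuine index-$2$ orientation homomorphism with the claimed target subgroup being of type $(+,+)$. For $(-,-)$ one must use Lemma \ref{Lem of change sides}: an element reversing the orientation of $\Sigma_g$ and of $S^3$ cannot change the two sides of $\Sigma_g$, so such elements form a coset of the subgroup fixing all three pieces of data, and that subgroup is exactly the $(+,+)$ part. For $(+,-)$ and $(-,+)$ the homomorphism to $\mathbb Z_2$ (orientation of $S^3$, resp. of $\Sigma_g$) is evidently well-defined and surjective, and its kernel preserves the remaining orientation by definition of the type, hence is $(+,+)$; one just records that restricting an extendable action to a subgroup is again extendable over the same $S^3$. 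Everything else is the bookkeeping $|G| = 2|G_0|$ together with the already-known value $AE_g = 2g+2$ and the constructions in Example \ref{Ex of cage}.
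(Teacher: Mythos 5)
Your proof is correct and follows essentially the same route as the paper: the paper likewise takes the lower bounds from items (4), (5), (6) of Example \ref{Ex of cage} and treats the upper bound $AE_g\mathcal{T}\leq 2AE_g$ as clear, which is exactly the index-$2$ subgroup observation (the $(+,+)$ part of $G$ has index $2$ and order at most $AE_g$) that you spell out. Your extra appeal to Lemma \ref{Lem of change sides} in the $(-,-)$ case is unnecessary, since the kernel of the orientation homomorphism is the $(+,+)$ part by the very definition of that type, but it is harmless.
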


\begin{proof}
Let $\mathcal{T}$ be one of $(+,-)$, $(-,+)$ and $(-,-)$. Clearly $AE_g\mathcal{T}\leq 2AE_g$. Then by (4), (5) and (6) in Example \ref{Ex of cage}, we have $AE_g\mathcal{T}=2AE_g$.
\end{proof}

\begin{proposition}\label{Pro of abelnot}
For odd $g>1$, the type $(Mix)$ extendable abelian group action does not exist.
\end{proposition}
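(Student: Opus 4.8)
The plan is to derive a contradiction from the existence of a type $(Mix)$ extendable abelian $G$-action on $\Sigma_g$ when $g$ is odd, by combining the parity constraints coming from the three-dimensional topology with the structure of $\Sigma_g/G$. By Definition \ref{Def of five type}, a type $(Mix)$ action supplies $h_1\in G$ preserving the orientation of $\Sigma_g$ but reversing that of $S^3$, and $h_2\in G$ reversing the orientation of $\Sigma_g$ but preserving that of $S^3$. Let $G_o\leq G$ be the (index $2$) subgroup of elements preserving the orientation of $\Sigma_g$; note $h_1\in G_o$ and $h_2\notin G_o$. The first step is to record that the $G_o$-action on $\Sigma_g$ is itself extendable over $S^3$ (restricting the $G$-action on $S^3$), of type $(+,-)$ since $h_1\in G_o$ reverses the orientation of $S^3$; in particular, as in the proof of Proposition \ref{Pro of cepm}, the $G_o$-action on $\Sigma_g$ extends to a handlebody $V_g$ by Lemma \ref{Lem of extend to Hand}, so $\Sigma_g/G_o$ is an orientable $2$-orbifold whose generators pair up as in Lemma \ref{Lem of extend to Hand}(c).

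Next I would exploit the element $h_2$. Consider the cyclic subgroup generated by $h_2$ (or, more efficiently, pass to $\langle h_1, h_2\rangle$) and look at its action on $S^3$. Since $h_2$ preserves the orientation of $S^3$, Lemma \ref{Lem of change sides} forces $h_2$ to change the two sides of $\Sigma_g$ in $S^3$ (because it reverses the orientation of $\Sigma_g$). Thus $h_2$ interchanges the two complementary handlebodies bounded by $\Sigma_g\subset S^3$. The key is then to quotient by the orientation-preserving-on-$S^3$ part: let $G_s\leq G$ be the subgroup of elements preserving the orientation of $S^3$ (index $2$, containing $h_2$ but not $h_1$), so that $S^3/G_s$ is an orientable orbifold and, by Lemma \ref{Lem of Fix circle} applied to cyclic subgroups with fixed points, the singular locus is well-controlled. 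Because $g$ is odd and the $G_o$-action extends to $V_g$, one gets that $\Sigma_g/G_o$ cannot be of the form producing an even-order group in the relevant class — this is precisely the $g$ odd obstruction already isolated in Propositions \ref{Pro of cepm} and \ref{Pro of cemm}: the relevant homology-$3$-sphere quotient $S^3/\langle\,\cdot\,\rangle$ forces the orientation-preserving-on-$\Sigma_g$ subgroup to have \emph{odd} order after dividing out a fixed-point-bearing involution, while the $(Mix)$ hypothesis forces a $\mathbb{Z}_2\oplus\mathbb{Z}_2$ quotient of $G$ and hence a $2$-dimensional $2$-torsion, contradicting the parity count when $g$ is odd.

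Concretely, the argument I expect to write runs: $(Mix)$ implies $G$ surjects onto $\mathbb{Z}_2\oplus\mathbb{Z}_2$ with $G_o$ and $G_s$ two of the three index-$2$ subgroups and $G_o\cap G_s$ of index $4$; the group $G_o\cap G_s$ acts preserving both orientations, so this is a type $(+,+)$ extendable action, and $h_1,h_2$ generate the quotient $\mathbb{Z}_2\oplus\mathbb{Z}_2$ on top of it. Running the Proposition \ref{Pro of cepm}/\ref{Pro of cemm} analysis on the cyclic pieces, $\Sigma_g/(G_o\cap G_s)$ is a small-genus orbifold (sphere or disk with few cone points), and applying Lemma \ref{Lem of odd order} through the fixed point guaranteed by Lemma \ref{Lem of fixed point} (using that the complementary pieces of $\Sigma_g/(G_o\cap G_s)$ in the homology-sphere quotient $S^3/(G_o\cap G_s)$ have odd Euler characteristic) shows $|G_o\cap G_s|$, hence the relevant index, forces $g$ to be even.

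The main obstacle will be organizing the index-$2$ and index-$4$ subgroup bookkeeping so that the fixed-point/parity lemmas (\ref{Lem of odd order}, \ref{Lem of fixed point}, \ref{Lem of Fix circle}, \ref{Lem of homology}) apply cleanly to the right intermediate quotient — in particular choosing which involution to mod out by so that it is guaranteed a fixed point in a piece of odd Euler characteristic, and checking that the handlebody-orbifold classification of Lemma \ref{Lem of hand classify} still constrains $\Sigma_g/G_o$ tightly enough. Once the correct intermediate subgroup is pinned down, the contradiction with $g$ odd should follow exactly as in the cyclic cases already treated, so I would present it by reduction to those propositions rather than redoing the orbifold Euler-characteristic computations.
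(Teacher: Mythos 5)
There is a genuine gap: your proof is a plan whose decisive step is deferred, and that step does not go through. You propose to pin down an intermediate subgroup ($G_o$, $G_s$, or $G_o\cap G_s$) and then conclude ``exactly as in the cyclic cases'' of Propositions \ref{Pro of cepm} and \ref{Pro of cemm}. But those propositions, and the machinery they rest on (Lemma \ref{Lem of hand classify}, Lemma \ref{Lem of Tocyclic}, Lemma \ref{Lem of odd order} applied to $\langle h\rangle/\langle h^2\rangle$, Lemma \ref{Lem of Fix circle}), are specifically about \emph{cyclic} groups, and Lemma \ref{Lem of hand classify} in addition needs $|G|>g-1$, which was available there only because one first exhibited examples realizing large orders. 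The present proposition asserts nonexistence of type $(Mix)$ abelian actions of \emph{every} order when $g$ is odd, so no lower bound on $|G|$, $|G_o|$ or $|G_o\cap G_s|$ is available, and $G_o\cap G_s$ need not be cyclic; the claimed ``parity count'' for $|G_o\cap G_s|$ is never actually derived, and it is exactly where the content of the proof should be. (Your preliminary remarks --- that $h_1,h_2$ both change the sides of $\Sigma_g$ by Lemma \ref{Lem of change sides}, and that $G$ surjects onto $\mathbb{Z}_2\oplus\mathbb{Z}_2$ --- are correct but by themselves yield no contradiction.)

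The paper's argument is quite different and purely homological, with no orbifold classification and no restriction on $|G|$. By Lemma \ref{Lem of homology}, $\Sigma_g$ splits $S^3$ into $M$ and $N$ with $H_1(M)\cong H_1(N)\cong\oplus_{i=1}^{g}\mathbb{Z}$, and one chooses a basis $\alpha_1,\dots,\alpha_g,\beta_1,\dots,\beta_g$ of $H_1(\Sigma_g)$ adapted to this splitting, so the intersection form is $\left(\begin{smallmatrix}0 & X\\ -X^t & 0\end{smallmatrix}\right)$. Since $h_1$ and $h_2$ both exchange $M$ and $N$, their induced maps on $H_1(\Sigma_g)$ are block anti-diagonal, $\left(\begin{smallmatrix}0 & A\\ B & 0\end{smallmatrix}\right)$ and $\left(\begin{smallmatrix}0 & C\\ D & 0\end{smallmatrix}\right)$; $h_1$ preserves and $h_2$ reverses the intersection form, giving $A^tXB=-X^t$ and $C^tXD=X^t$, while commutativity $h_1h_2=h_2h_1$ gives $AD=CB$ and $BC=DA$. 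Taking determinants yields $(-1)^g=|ABCD|=|AD|^2\geq 0$, hence $g$ is even. This determinant parity mechanism, exploiting that \emph{both} distinguished elements swap the two sides, is the missing idea in your proposal; without it (or a worked-out substitute valid for arbitrary finite abelian $G$), the reduction you sketch does not constitute a proof.
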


\begin{proof}
Suppose there is a type $(Mix)$ extendable abelian $G$-action on $\Sigma_g$. Then there exist $h_1, h_2\in G$ such that $h_1$ preserves the orientation of $\Sigma_g$ and reverses the orientation of $S^3$, and $h_2\in G$ reverses the orientation of $\Sigma_g$ and preserves the orientation of $S^3$. Then both $h_1$ and $h_2$ change the two sides of $\Sigma_g$.

By Lemma \ref{Lem of homology}, $\Sigma_g$ splits $S^3$ into $M$ and $N$, and $H_1(M)\simeq H_1(N)\simeq \oplus_{i=1}^{g}\mathbb{Z}$. Let $i_{M}: \Sigma_g\rightarrow M$, $i_{N}: \Sigma_g\rightarrow N$ be the inclusions. We can choose a basis of $H_1(\Sigma_g)$,
denoted by $\{\alpha_1, \alpha_2, \cdots, \alpha_g, \beta_1, \beta_2, \cdots, \beta_g\}$, satisfying the following conditions:

(1) $\{{i_M}_*(\alpha_k)\}$ is a basis of $H_1(M)$, $k=1,2,\cdots,g$.

(2) $\{{i_N}_*(\beta_k)\}$ is a basis of $H_1(N)$, $k=1,2,\cdots,g$.

(3) ${i_N}_*(\alpha_k)=0$ and ${i_M}_*(\beta_k)=0$, $k=1, 2, \cdots,g$.

Then consider the intersection product of $H_1(\Sigma_g)$. By the
condition (3) we have $\alpha_i\bullet\alpha_j=0$ and
$\beta_i\bullet\beta_j=0$, $1\leq i, j\leq g$, hence we have:
\[ \begin{pmatrix}
\alpha_1\\ \vdots\\ \alpha_g\\ \beta_1\\ \vdots\\
\beta_g
\end{pmatrix} \bullet
  \begin{pmatrix}
\alpha_1, \cdots, \alpha_g, \beta_1, \cdots, \beta_g
\end{pmatrix} =
  \begin{pmatrix}
0 & X \\ -X^t & 0
\end{pmatrix} \]
Here $X=(x_{i,j})$ is a $g\times g$ matrix, and $x_{i,j}=\alpha_i\bullet\beta_j$. $X^t$ is the transposition of $X$. There are $g\times g$ matrices $A,B,C,D$, such that:
\[ {h_1}_*\begin{pmatrix}
\alpha_1, \cdots, \alpha_g, \beta_1, \cdots, \beta_g
\end{pmatrix} =
  \begin{pmatrix}
\alpha_1, \cdots, \alpha_g, \beta_1, \cdots, \beta_g
\end{pmatrix}
  \begin{pmatrix}
0 & A \\ B & 0
\end{pmatrix} \]
\[ {h_2}_*\begin{pmatrix}
\alpha_1, \cdots, \alpha_g, \beta_1, \cdots, \beta_g
\end{pmatrix} =
  \begin{pmatrix}
\alpha_1, \cdots, \alpha_g, \beta_1, \cdots, \beta_g
\end{pmatrix}
  \begin{pmatrix}
0 & C \\ D & 0
\end{pmatrix} \]
By computing the intersection product, we have the following:
\[ \begin{pmatrix}
0 & B^t \\ A^t & 0
\end{pmatrix}
  \begin{pmatrix}
0 & X \\ -X^t & 0
\end{pmatrix}
  \begin{pmatrix}
0 & A \\ B & 0
\end{pmatrix} =
  \begin{pmatrix}
0 & X \\ -X^t & 0
\end{pmatrix} \]
\[ \begin{pmatrix}
0 & D^t \\ C^t & 0
\end{pmatrix}
  \begin{pmatrix}
0 & X \\ -X^t & 0
\end{pmatrix}
  \begin{pmatrix}
0 & C \\ D & 0
\end{pmatrix} =
  \begin{pmatrix}
0 & -X \\ X^t & 0
\end{pmatrix} \]
Hence we have $A^tXB=-X^t$ and $C^tXD=X^t$. Since the group $G$ is abelian, $h_1h_2=h_2h_1$. Then we have:
\[ \begin{pmatrix}
0 & A \\ B & 0
\end{pmatrix}
  \begin{pmatrix}
0 & C \\ D & 0
\end{pmatrix} =
  \begin{pmatrix}
0 & C \\ D & 0
\end{pmatrix}
  \begin{pmatrix}
0 & A \\ B & 0
\end{pmatrix} \]
This is equivalent to $AD=CB$ and $BC=DA$.

The $g\times g$ matrices $A$, $B$, $C$, $D$ and $X$ are all non-degenerate. By computing the determinants we have
$|A||X||B|=(-1)^g|X|$, $|C||X||D|=|X|$ and $|A||D|=|C||B|$. Hence
$(-1)^g=|ABCD|=|AD|^2$, and $g$ is even.
\end{proof}

\begin{proposition}\label{Pro of aeeven}
For even $g>1$, $AE_g(Mix)=2g+4$.
\end{proposition}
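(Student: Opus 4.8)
The plan is to establish $AE_g(Mix) = 2g+4$ for even $g > 1$ by combining a lower bound already in hand with a matching upper bound. The lower bound $AE_g(Mix) \geq 2g+4$ comes directly from Example \ref{Ex of fork} (the ``Fork'' construction), which produces a type $(Mix)$ extendable abelian action of order $2g+4$ on $\Sigma_g$ for even $g$. So the entire work lies in proving $AE_g(Mix) \leq 2g+4$.

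First I would set up the situation: let $G$ be an abelian group acting on $\Sigma_g$ as a type $(Mix)$ extendable action, with $h_1, h_2 \in G$ as in Definition \ref{Def of five type}, so $h_1$ preserves the orientation of $\Sigma_g$ and reverses that of $S^3$, while $h_2$ reverses the orientation of $\Sigma_g$ and preserves that of $S^3$; by Lemma \ref{Lem of change sides} both $h_1$ and $h_2$ change the two sides of $\Sigma_g$. Let $G_+$ be the index-$2$ subgroup of elements preserving the orientation of $\Sigma_g$. I expect the key structural move to be passing to a suitable orientation-preserving subgroup $K \leq G$ that acts on $S^3$ preserving both orientations and preserving a handlebody side, so that $K$ extends over a handlebody $V_g$ and the handlebody-orbifold machinery of \S\ref{Sec of maxOrdHan} applies; concretely one wants to arrange, using $h_1$ or $h_2$ to fold the two sides together, that a subgroup of index $2$ or $4$ in $G$ is realized as $V_g/K$ with $|K| > g-1$, forcing $V_g/K$ into classes (A--D) of Lemma \ref{Lem of hand classify}. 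Then Lemma \ref{Lem of FundG}, Lemma \ref{Lem of EulerNo} and Lemma \ref{Lem of handcover} bound $|K|$ in terms of $g$, and one recovers $|G| \leq 2g+4$ after accounting for the index. Since $G$ surjects onto $\mathbb{Z}_2 \oplus \mathbb{Z}_2$ (via the two orientation characters on $\Sigma_g$ and on $S^3$), the relevant index is $4$, so one is really bounding an order-$4$-index abelian subgroup by roughly $(g+1)/2 \cdot$ something; the parity constraints from Lemma \ref{Lem of odd order} and Lemma \ref{Lem of fixed point} (as used in Propositions \ref{Pro of cepm} and \ref{Pro of cemm}) will pin down exactly which class occurs and eliminate the cases that would give a larger bound.

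The hard part will be the bookkeeping that translates a type $(Mix)$ action into a handlebody action on the right subgroup while keeping precise track of how orientations and sides interact, and in particular ruling out configurations where the quotient orbifold $\Sigma_g/G_+$ or its lift to $S^3$ has few enough singular points to allow $|G| > 2g+4$. I would handle this by analyzing $S^3/G_+$ (or $S^3/K$ for the chosen $K$) via Lemma \ref{Lem of 3SphCyc} and Lemma \ref{Lem of homology}: the fixed-point sets of elements acting on $S^3$ are circles, $S^0$'s or $S^2$'s, and the homology of the two complementary pieces of $\Sigma_g$ in the relevant branched quotient forces Euler-characteristic equalities through Lemma \ref{Lem of Euler char equ}. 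Combining the resulting numerical constraints $g = \chi \cdot |K|$ with the list of possible $\chi$ values from Lemma \ref{Lem of EulerNo}, and using that $g$ is even, should leave $|G| = 2g+4$ as the maximum, achieved exactly by the Fork example. I would close by citing Example \ref{Ex of fork} for the reverse inequality, so that $AE_g(Mix) = 2g+4$ for even $g > 1$.
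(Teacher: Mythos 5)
Your lower bound (Example \ref{Ex of fork}) matches the paper, but the proposed route to the upper bound has a genuine gap: the handlebody-orbifold machinery you want to invoke simply does not reach the bound $2g+4$. Lemma \ref{Lem of hand classify} classifies $V_g/K$ only when $K$ is \emph{cyclic} and $|K|>g-1$; here the natural subgroup to pass to is the index-$4$ subgroup $G_o$ of elements preserving both orientations, which is abelian but need not be cyclic, and if one assumes $|G|>2g+4$ one only gets $|G_o|>g/2+1$, which is far below $g-1$ for $g\geq 4$. So the classes (A--D), together with Lemmas \ref{Lem of FundG}, \ref{Lem of EulerNo}, \ref{Lem of handcover}, \ref{Lem of odd order} and \ref{Lem of fixed point}, cannot ``pin down'' anything at the $2g+4$ level; that machinery is only effective when the relevant subgroup has order above $g-1$ (i.e.\ $|G|$ of size roughly $4g$), as in Propositions \ref{Pro of cepm} and \ref{Pro of cemm}, which concern cyclic groups.

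The paper's actual argument is quite different and its key steps are absent from your plan. One first takes $h\in G$ reversing the orientation of $S^3$, uses Lemma \ref{Lem of 3SphCyc} to see $Fix(h)\cong S^0$ or $S^2$, and defines $G_o'$ as the elements preserving the orientations of $Fix(h)$ and $S^3$; when $|G_o'|>2$ and $G_o'\not\cong\mathbb{Z}_2\oplus\mathbb{Z}_2$, one shows via Lemma \ref{Lem of Fix circle} and a transversality argument that $G_o'=G_o$, and then applies the Riemann--Hurwitz formula to $\Sigma_g/G_o$ together with the a priori bound $|G_o|\geq g/2+1$ from the Fork example to reduce to the four possibilities $(g',k,|G_o|)=(2,0,g-1),(1,2,g),(0,4,g+1),(0,6,g/2+1)$. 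The heart of the proof is then excluding the first three by analyzing the residual $G/G_o\cong\mathbb{Z}_2\oplus\mathbb{Z}_2$ action on the pair $(\Sigma_g/G_o, S^3/G_o)$: the genus-$2$ case fails because the singular circle $\Theta$ lies on one side, the torus case fails by rerunning the intersection-form argument of Proposition \ref{Pro of abelnot} (where $(-1)^{g'}=|AD|^2$ forces $g'$ even), and the $(0,4,g+1)$ case fails by a combinatorial argument on the four points of $|\Sigma_g/G_o|\cap\Theta$ forcing an element of order $4$ in $\mathbb{Z}_2\oplus\mathbb{Z}_2$. A separate Case 2 ($G_o'\cong\mathbb{Z}_2\oplus\mathbb{Z}_2$ or $|G_o'|=2$) is eliminated by pinning $|G|=16$, $g=4$, extending the orientation-preserving subgroup over $V_4$ (here, and only here, handlebody orbifolds enter, via the divisibility constraint $4\chi(\mathcal{G})\in\mathbb{Z}$ against $\chi=-3/8$). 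None of these exclusion arguments appears in your proposal, and without them the bound $|G|\leq 2g+4$ is not established.
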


\begin{proof}
Let $G$ be an abelian group acting on $\Sigma_g$. The action is a type $(Mix)$ extendable action, and $|G|=AE_g(Mix)$. Let $G_o$ be the subgroup of $G$ containing all elements preserving both the orientations of $\Sigma_g$ and $S^3$, then $|G|=4|G_o|$.

Choose $h\in G$ such that it reverses the orientation of $S^3$. Let $Fix(h)$ be the set of fixed points of $h$. By Lemma \ref{Lem of 3SphCyc}, $Fix(h)$ is a $S^0$ or $S^2$. Since $G$ is abelian, every element of $G$ keeps $Fix(h)$ invariant. Let $G_o'$ be the subgroup of $G$ containing all elements preserving both the orientations of $Fix(h)$ and $S^3$. If $Fix(h)\cong\{0, 1\}$, then preserving the orientation of $Fix(h)$ means fixing $0$ and $1$.

{\bf Case 1}: $G_o'\ncong\mathbb{Z}_2\oplus \mathbb{Z}_2$ and $|G_o'|>2$.

Then $G_o'$ is a cyclic group of order $|G_o'|\geq 3$, and the $G_o'$-action on $S^3$ has a fixed point. By Lemma \ref{Lem of Fix circle}, the set of singular points of $S^3/G_o'$ is a circle $\Theta$, with index $|G_o'|$, and $\pi_1(|S^3/G_o'|)$ is trivial. $|S^3/G_o'|$ is an integer homology 3-sphere.

Let $\Theta'$ be the pre-image of $\Theta$ in $S^3$, then $\Theta'$ is also a circle. Since $|G_o'|\geq 3$, $\Sigma_g$ intersects $\Theta'$ transversely. Hence $|\Sigma_g/G_o'|$ is an orientable closed surface in $|S^3/G_o'|$. Then $\Sigma_g/G_o'$ is an orientable 2-suborbifold in $S^3/G_o'$, namely $G_o'$ preserves both the orientations of $\Sigma_g$ and $S^3$. Since $|G/G_o'|\leq 4$, we have $G_o'=G_o$. By Lemma \ref{Lem of homology}, $|\Sigma_g/G_o|$ splits $|S^3/G_o|$ into two compact manifolds $M$ and $N$.

Let $g'$ be the genus of $|\Sigma_g/G_o|$, and $k$ be the number of singular points in $\Sigma_g/G_o$. Then $k$ must be even. By the Riemann-Hurwitz formula, we have:
$$2-2g=|G_o|(2-2g'-k(1-\frac{1}{|G_o|})).$$
By Example \ref{Ex of fork}, $|G_o|\geq (2g+4)/4=g/2+1$. Then we have:
$$(g',k,|G_o|)=(2,0,g-1),(1,2,g),(0,4,g+1),(0,6,g/2+1).$$
Notice that there is a $G/G_o\cong\mathbb{Z}_2\oplus \mathbb{Z}_2$-action on $(\Sigma_g/G_o,S^3/G_o)$. It satisfies a similar condition of the type $(Mix)$ extendable action.

If $(g', k, |G_o|)=(2, 0, g-1)$, then $\Theta$ is contained in one side of
$|\Sigma_g/G_o|$ in $|S^3/G_o|$. Hence the required $\mathbb{Z}_2\oplus\mathbb{Z}_2$-action does not exist.

If $(g', k, |G_o|)=(1, 2, g)$, then $|\Sigma_g/G_o|\simeq T^2$. In the proof of Proposition \ref{Pro of abelnot}, we only used homology theories, and the proof does not depend on $g>1$. Hence by a similar proof, the required $\mathbb{Z}_2\oplus\mathbb{Z}_2$-action on $(|\Sigma_g/G_o|,|S^3/G_o|)$ does not exist.

If $(g', k, |G_o|)=(0, 4, g+1)$, then $|\Sigma_g/G_o|\simeq S^2$ and $k=4$. Suppose $|\Sigma_g/G_o|$ intersects $\Theta$ at $\{A, B, C, D\}$, see Figure \ref{fig:twoBoneC}.

\begin{figure}[h]
\centerline{\scalebox{0.5}{\includegraphics{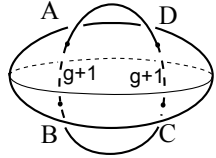}}}
\caption{$\Sigma_g/G_o$ and $\Theta$ in $S^3/G_o$}\label{fig:twoBoneC}
\end{figure}

If the required $\mathbb{Z}_2\oplus \mathbb{Z}_2$-action exists, then there exists $\eta\in \mathbb{Z}_2\oplus \mathbb{Z}_2$, it preserves the orientation of $S^3/G_o$ and reverses the orientation of $\Sigma_g/G_o$. Then on $\{A, B, C, D\}$ it has no fixed point. Otherwise, in $S^3/G$ the image of the fixed point will have non-abelian local group. Since $\eta$ changes the two sides of $S^3/G_o$, we can assume $\eta(AB)=AD$. Then $\eta$ will have order $4$, which is a contradiction.

Hence we have $(g', k, |G_o|)=(0, 6, g/2+1)$, and $|G|=2g+4$.

{\bf Case 2}: $G_o'\cong \mathbb{Z}_2\oplus \mathbb{Z}_2$ or
$|G_o'|=2$.

If $|G|=AE_g(Mix)>2g+4$, then $|G|>8$. Hence $|G_o'|\neq 2$. Then $G_o'\cong \mathbb{Z}_2\oplus \mathbb{Z}_2$, and $|G|=16$. By Proposition \ref{Pro of abelnot} and \ref{Pro of ORhandlebody}, $g$ can only be $4$.

Let $G'$ be the subgroup of $G$ containing all elements preserving the orientation of $\Sigma_4$. By the proof of Proposition \ref{Pro of ag}, the $G'$-action can extend to $V_4$. Since $|G/G_o'|\leq 4$ and $|G|=16$, we have $G/G_o'\cong \mathbb{Z}_2\oplus \mathbb{Z}_2$. Then $G'\cong \mathbb{Z}_2\oplus \mathbb{Z}_2\oplus \mathbb{Z}_2$ or $\mathbb{Z}_4\oplus \mathbb{Z}_2$. Let $\mathcal{G}$ be the induced finite graph of finite groups of $V_4/G'$. On one hand $\chi(\mathcal{G})=(1-4)/8=-3/8$. On the other hand, vertices and edges of $\mathcal{G}$ have orders $1$, $2$ or $4$. Hence $4\chi(\mathcal{G})\in \mathbb{Z}$. This is a contradiction.
\end{proof}

By Proposition \ref{Pro of cemp}--\ref{Pro of aeeven} and Remark \ref{Rem of classical result}, we finished the proof of Proposition \ref{Pro of maxorder of five type}, hence finished the proof of Theorem \ref{Thm of maximum of CandA action}.






\bibliographystyle{amsalpha}

\begin{thebibliography}{10}
\bibitem[1]{BMP}
M. Boileau, S. Maillot, J. Porti, {\it Three-dimensional orbifolds and their geometric structures}, Panoramas et Synth\`eses 15. Soci\'et\'e Math\'ematique de France, Paris, 2003.

\bibitem[2]{BP}
M. Boileau, J. Porti, {\it Geometrization of 3-orbifolds of cyclic type}, Ast\'erisque No. 272 (2001).

\bibitem[3]{Ei}
S. Eilenberg, {\it Topologie du plan}, Fundamenta Mathematicae, vol. 26 (1936), 61--112.

\bibitem[4]{Ha}
W. J. Harvey, {\it Cyclic groups of automorphisms of a compact
Riemann surface}, Quart. J. Math. Oxford Ser. (2) 17 (1966), 86--97.

\bibitem[5]{Li}
G. Livesay, G. R. {\it Involutions with two fixed points on the
three-sphere}, Ann. of Math. (2) 78 (1963) 582--593.

\bibitem[6]{Ma}
C. Maclachlan, {\it A bound for the number of automorphisms of a
compact Riemann surface}, J. London Math. Soc. 44 (1969), 265--272.

\bibitem[7]{MMZ}
D. McCullough, A. Miller, B. Zimmermann, {\it Group actions on handlebodies}, Proc. London Math. Soc. (3) 59, (1989), 373--416.

\bibitem[8]{RZ1}
M. Reni, B. Zimmermann, {\it Handlebody orbifolds and Schottky
uniformizations of hyperbolic 2-orbifolds}, Proc. Amer. Math. Soc. 123 (1995), 3907--3914.

\bibitem[9]{RZ2}
M. Reni, B. Zimmermann, {\it Extending finite group actions from surfaces to handlebodies},  Proc. Am. Math. Soc. 124 (1996),
2877--2887.

\bibitem[10]{Sm}
P. A. Smith, {\it New results and old problems in finite
transformation groups},  Bull. AMS, 66 (1960), 401--415.

\bibitem[11]{St}
F. Steiger, {\it maximalen Ordnungen periodischer topologischer
Abbildungen geschlossener Flachen in sich}, Comment. Math. Helv. 8 (1935), 48--69.

\bibitem[12]{Th}
W. P. Thurston, {\it The geometry and topology of three-manifolds},
Lecture notes, 1978.

\bibitem[13]{Wa}
S. C. Wang, {\it Maximum orders of periodic maps on closed surfaces}, Topology Appl. 41 (1991), 255--262.

\bibitem[14]{WWZZ1}
C. Wang, S. C. Wang, Y. M. Zhang, B. Zimmerman, {\it Extending
finite group actions on surfaces over $S^3$}, Topology Appl. 160 (2013), 2088--2103.

\bibitem[15]{Zi1}
B. Zimmermann, {\it Genus actions of finite groups on $3$-manifolds}, Michigan Math. J. 43 (1996), 593--610.
\end{thebibliography}

\end{document}